\documentclass[10pt,reqno]{amsart}
\usepackage{amsmath}
\usepackage{amsfonts}
\usepackage{indentfirst}
\usepackage{latexsym}
\usepackage{color}
\allowdisplaybreaks
\usepackage{amsthm}
\usepackage{amssymb}
\usepackage{enumerate}
\usepackage{ulem}
\textheight=8.8in \textwidth=6.28in
\topmargin=0mm \oddsidemargin=0mm
\evensidemargin=0mm
\allowdisplaybreaks

\newtheorem{theorem}{Theorem}[section]
\newtheorem{lemma}[theorem]{Lemma}
\newtheorem{thm}[theorem]{Theorem}
\newtheorem{prop}[theorem]{Proposition}
\newtheorem{cor}[theorem]{Corollary}
\newtheorem{defn}[theorem]{Definition}
\newtheorem{rmk}[theorem]{Remark}
\makeatletter

\newcommand{\Rmnum}[1]{\expandafter\@slowromancap\romannumeral #1@}
\makeatother
\begin{document}
\title[A Gromov-Hausdorff convergence theorem of surfaces in $\mathbb{R}^n$ with small total curvature]
{\bf A Gromov-Hausdorff convergence theorem of surfaces in $\mathbb{R}^n$ with small total curvature}
\author[ J. Sun, J. Zhou]
{Jianxin Sun, Jie Zhou}
\address{
\newline
Jianxin Sun:
Academy of Mathematics and Systems Science, CAS,
Beijing 100190, P.R. China.
{\tt Email:sunjianxin201008@126.com}
\newline
\newline
 Jie Zhou:
Academy of Mathematics and Systems Science, CAS,
Beijing 100190, P.R. China.
{\tt Email:zhoujie2014@mails.ucas.ac.cn}}
\date{}
\maketitle
\begin{abstract}
In this paper, we mainly study the compactness and local structure of immersing surfaces in $\mathbb{R}^n$ with local uniform bounded area and small total curvature $\int_{\Sigma\cap B_1(0)} |A|^2$. A key ingredient is a new quantity which we call isothermal radius. Using the estimate of the isothermal radius we establish a compactness theorem of such surfaces in intrinsic $L^p$-topology and extrinsic $W^{2,2}$-weak topology. As applications, we can explain Leon Simon's decomposition theorem\cite{LS} in the viewpoint of convergence and prove a non-collapsing version of H\'{e}lein's convergence theorem\cite{H}\cite{KL12}.

\end{abstract}

\section{Introduction}

Let $F:\Sigma\to \mathbb{R}^n$ be an immersion of the surface $\Sigma$ in $\mathbb{R}^n$. The total curvature of $F$ is defined by
$$\int_{\Sigma}|A|^2d\mu_g,$$
where $A$ is the second fundamental form and $g=dF\otimes dF$ is the induced metric. There are many results about the $L^p(p\ge 2)$ norm of the second fundamental form.

In\cite{L}, Langer first proved surfaces with $\|A\|_{L^p}\le C (p>2)$ are locally $C^{1}$-graphs over small balls(but with uniform radius) of the tangent spaces and deduced the compactness of such surfaces in the meaning of graphical convergence. Recently, Breuning considered a high dimensional generalization of Langer's theorem in \cite{B15}. He proved that an immersion $f:M^n\to \mathbb{R}^{n+l}$  with bounded volume and $\int_M|A|^pd\mu_g\le C(p>n)$ is a $C^{1,\alpha}$-graph in a uniform small ball for $\alpha<1-\frac{n}{p}$. This is a geometric analogue of the Sobolev embedding $W^{2,p}\to C^{1,1-\frac{n}{p}}$ if we regard the second fundamental form as the ``second derivative" of an immersing submanifold. In the critical case $p=n=2$, Leon Simon proved a decomposition theorem \cite[lemma 2.1]{LS} which says a surface with bounded volume and sufficient small total curvature is an almost flat Lipschitz graph outside of some small topological disks. Using this and noticing the total curvature is equal to the Willmore functional $\int_{\Sigma}|H|^2d\mu_g$ up to a constant for a closed surface with fixed topology, he got the existence of surfaces minimizing the Willmore functional.

Another important observation of immersing surfaces with finite total curvature is the compensated compactness phenomenon obtained in \cite{MS}. In general, the total curvature only controls the $L^1$-norm of the Gauss curvature $\int_{\Sigma}|K|d\mu_g$. But under the condition
 $$\int|A|^2\le 4\pi\varepsilon,$$
  M\"{u}ller and  \v{S}ver\'{a}k estimated the Hardy norm of $*Kd\mu_g$  and solved the equation
  $$-\triangle v=*Kd\mu_g$$
  such that $v\in L^{\infty}$.
  Using this $L^{\infty}$-estimation of the metric, E.Kuwert, R. Sch\"{a}tzle, Y.X. Li \cite{KS12}\cite{KL12} and T.Rivière \cite{Ri13} proved the compactness theorem of immersing maps $f:\Sigma_g\to \mathbb{R}^n$ with Willmore functional value
      $$Will(f)< 8\pi.$$
  With the compactness theorem, they gave an alternate approach of existence of the Willmore minimizer, see \cite{KS13}\cite{Ri14}. Their mainly observation is that the Willmore functional value will jump over the gap $8\pi$ as the complex structure diverges to the boundary of the moduli space $\mathcal{M}_g$. And once the complex structure $\phi_k:\Sigma_k\to \Sigma_g$ converges, they could regard $f_k\circ \phi_k$ to be conformal and consider the convergence of these mappings in weak $W^{2,2}_{loc}(\Sigma\backslash S,\mathbb{R}^n)$ topology.

Geometrically, the divergency of the complex structure means the collapsing of geodesics, which changes the local topology(or area density) and contributes a gap to the Willmore functional value. To use a geometric quantity to replace the convergence of the complex structure to rule out the collapsing phenomenon, we define the isothermal radius:
\begin{defn}[Isothermal radius]\label{Isothermal radius}
  Assume $(\Sigma, g)$ is a Riemannian surface with metric $g$. For $\forall p\in \Sigma$, define the isothermal radius of the metric $g$ at the point $p$ to be
   $$i_g(p)=\mathop{\mathrm{sup}}\{r|\exists\ \text{isothermal  coordinate} \  f_0:D_1(0)\to U(p)\ s.t.\ U(p)\supset B_r^{\Sigma}(p)\ \text{and} \ f_0(0)=p \},$$
   where $B_r^{\Sigma}(p)$ is the geodesic ball centered at $p$ with radius $r$. We call $i_g(\Sigma)=\inf_{p\in \Sigma}i_g(p)$ the isothermal radius of $(\Sigma,g)$.
  \end{defn}
  By the existence theorem of local conformal coordinates, $i(p)>0$ for any $p\in\Sigma$, but it may depend on the manifold $(\Sigma,g)$ and the point $p$. If we assume the isothermal radius  of a sequence of metrics has uniform lower bound, we can estimate the uniform bound of the metric and deduce the following compactness theorem:

\begin{thm}[Fundamental convergence theorem]\label{convergence}
  For any two positive real numbers $R, V$ and $\varepsilon<1$, and some positive function  $i_0:(0,R]\to \mathbb{R}_{>0}$, define $\mathcal{C}(n,\varepsilon,i_0, V, R)$ as the space of Riemannian immersions (of open Riemannian surfaces) $F:(\Sigma, g, p)\to \mathbb{R}^n$ proper in $B_R(0)$ such that $F(p)=0$, $F^{-1}(B_R(0))$ is connected,
   $$i_g(x)\ge i_0(R-|F(x)|)>0,\forall x\in\Sigma\cap F^{-1}(B_R(0)), $$
   $$\int_{\Sigma\cap F^{-1}(B_R(0))}|A|^2\le4\pi\varepsilon,$$
and
   $$\mu_g(\Sigma\cap F^{-1}(B_R(0)))\le VR^2,$$
where $B_R(0)$ is the open ball in $\mathbb{R}^n$ with radius $R$ and $i_g$ is the isothermal radius. Then for any sequence $\{F_k:(\Sigma_k, g_k,p_k)\to (\mathbb{R}^n,0)\}_{k=1}^{\infty}$ in $\mathcal{C}(n,\varepsilon, i_0,V,R)$, there exists a subsequence(also denoted as $\Sigma_k$) such that
  \begin{enumerate}[$1)$]
  \item $($Intrinsic convergence$)$ There is a pointed Riemannian surface $(\Sigma,g,p)$  $($may not complete$)$ with continuous Riemannian metric $g$ such that $(\Sigma_k\cap F_k^{-1}(B_R(p_k)), g_k, p_k)$ converges in pointed $L^p$ topology to $(\Sigma,g,p)$, that is, there exist smooth embeddings $\Phi_k:\Sigma\to \Sigma_k$, s.t. $\Phi_k^*g_k\to g$ in $L^p_{loc}(d\mu_g)$.
  \item The complex structure $\mathcal{O}_k$ of $\Sigma_k$ converges locally to the complex structure $\mathcal{O}$ of $\Sigma$.
  \item $($Extrinsic convergence$)$  There exists a $\mathbf{proper}$ isometric immersion $F:(\Sigma,g,p)\to B_R(0)$ such that $F(p)=0$ and $F_k\circ \Phi_k$ converges to $F$ weakly in $W^{2,2}_{loc}(\Sigma,\mathbb{R}^n)$ and strongly in $W^{1,p}_{loc}(\Sigma,\mathbb{R}^n)$.
  \end{enumerate}
\end{thm}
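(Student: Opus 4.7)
\medskip

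\noindent\textbf{Proof plan.} The strategy is to use the isothermal radius bound to produce, at every point $x\in\Sigma_k$, a local conformal chart $\varphi_{k,x}:D_1(0)\to\Sigma_k$ of definite size, then couple the Gauss equation with the small--total--curvature hypothesis and the Müller--Šverák estimate to obtain a uniform $L^\infty$ bound on the conformal factor $u_k$. Once the metric is uniformly pinched between two Euclidean ones on each chart, the immersion $F_k$ becomes a bounded sequence in $W^{2,2}\cap W^{1,\infty}$ on each chart via the identity $\Delta_{\mathrm{eu}} F_k=e^{2u_k}\mathbf{H}_k$, and all three conclusions follow from a diagonal extraction together with a gluing of charts.

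\smallskip

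\noindent\textbf{Step 1 (uniform chart estimates).} In an isothermal chart, write $g_k=e^{2u_k}(dx^2+dy^2)$. The Gauss equation reads $-\Delta u_k=K_k e^{2u_k}$, and the hypothesis $\int|A_k|^2\le 4\pi\varepsilon$ bounds $\|K_ke^{2u_k}\|_{L^1(D_1)}\le\tfrac12\int|A_k|^2<4\pi$. Applying the Müller--Šverák $L^\infty$--estimate (after splitting off the harmonic part with the $W^{1,2}$ normalization coming from the area bound $\mu_{g_k}\le VR^2$), I get $\|u_k\|_{L^\infty(D_{1/2})}\le C(\varepsilon,V)$. Thus $g_k$ is uniformly equivalent to the Euclidean metric on $D_{1/2}$.

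\smallskip

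\noindent\textbf{Step 2 (gluing and intrinsic $L^p$ convergence).} Pick, for each $k$, a maximal $\tfrac{1}{4}i_0$--separated net in $\Sigma_k\cap F_k^{-1}(B_{R-\delta}(0))$ and the associated isothermal charts of uniform size. The uniform conformal--factor bound of Step 1 makes the cardinality of the net locally bounded, and the transition maps between overlapping charts are holomorphic diffeomorphisms between subdomains of $D_1\subset\mathbb{C}$ with definite Jacobian control, hence precompact in $C^\infty_{\mathrm{loc}}$ by Cauchy's estimates. A diagonal subsequence extraction produces a limit nerve/atlas $\{(D_1,\varphi_\alpha)\}$ with holomorphic transition maps; this yields a pointed Riemann surface $(\Sigma,\mathcal O,p)$ carrying a continuous metric $g=e^{2u}|dz|^2$ with $u_k\to u$ in $L^p$ (in fact weak--$*$ $L^\infty$, strong in any $L^p$, by Rellich). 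The embeddings $\Phi_k$ are assembled chart by chart. This simultaneously gives conclusion (1) and conclusion (2).

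\smallskip

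\noindent\textbf{Step 3 (extrinsic $W^{2,2}$--weak convergence).} In each isothermal chart, $F_k$ satisfies the conformal Willmore identity $\Delta_{\mathrm{eu}}F_k=e^{2u_k}\mathbf{H}_k$, so $\|\Delta F_k\|_{L^2}^2\le e^{2\|u_k\|_\infty}\int|H_k|^2\,d\mu\le C\int|A_k|^2\le 4\pi C\varepsilon$. Since $|dF_k|^2=2e^{2u_k}\le C$, the sequence $F_k\circ\Phi_k$ is bounded in $W^{2,2}\cap W^{1,\infty}$ on each chart. Passing to a further subsequence gives weak $W^{2,2}_{\mathrm{loc}}$ and strong $W^{1,p}_{\mathrm{loc}}$ convergence to a map $F:\Sigma\to\overline{B_R(0)}$, and since $|dF|^2=2e^{2u}$ in the limit chart, $F$ is an isometric immersion.

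\smallskip

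\noindent\textbf{Step 4 (properness, the main obstacle).} The subtle point is showing $F(\Sigma)\subset B_R(0)$ and $F$ is proper. This is where the assumption $i_g(x)\ge i_0(R-|F(x)|)$ is essential: for any $\delta>0$ and any chain of isothermal charts reaching within distance $\delta$ of $\partial B_R(0)$, the chart sizes stay $\ge i_0(\delta)$, so the number of charts needed to reach $\partial B_R(0)$ is finite and the limit surface has no ``ideal boundary point'' accumulating inside $B_R(0)$. Combined with the connectedness of $F_k^{-1}(B_R(0))$, passing to the limit along the chain shows that the preimage of any compact $K\subset B_R(0)$ is covered by finitely many charts in $\Sigma$, hence is compact. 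I expect this properness--at--infinity argument (and the accompanying verification that $\Phi_k$ is well--defined on any relatively compact piece of $\Sigma$ once $k$ is large) to be the most delicate part; the rest is a mild packaging of standard Müller--Šverák plus elliptic weak compactness.
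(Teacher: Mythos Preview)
Your overall strategy matches the paper's: uniform $L^\infty$ control of the conformal factor via M\"uller--\v{S}ver\'ak, chart-by-chart extraction for the intrinsic and complex-structure convergence, elliptic estimates via the mean curvature equation for extrinsic convergence, and then properness. Two points need strengthening.

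\smallskip
\textbf{Step 1 (lower bound on $u_k$).} The area bound $\mu_{g_k}\le VR^2$ only yields an \emph{upper} bound on $u_k$ (via Jensen and the mean-value property applied to the harmonic part $h_k=u_k-v_k$). The \emph{lower} bound --- ruling out $u_k\to-\infty$ --- is exactly where the isothermal-radius hypothesis enters, and your phrase ``$W^{1,2}$ normalization coming from the area bound'' does not capture it. The paper's argument (Lemma~3.4) runs by contradiction: if $h_k(0)\to-\infty$, then by Harnack (since $h_k$ is harmonic and bounded above) $h_k\to-\infty$ locally uniformly, hence $\mathrm{area}_{g_k}(D_r)\to 0$ for every $r<1$. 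But $d_{g_k}(0,\partial f_{0,k}(D_1))\ge i_0$ means every radial segment has $g_k$-length at least $i_0$; a Cauchy--Schwarz estimate along rays followed by integration in the angular variable converts this into a positive lower bound on $\mathrm{area}_{g_k}(D_{1-\delta}\setminus D_\delta)$, a contradiction. Without this step the metrics could collapse and nothing in Steps 2--3 would go through.

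\smallskip
\textbf{Step 4 (properness).} Here the paper's argument is substantially more delicate than your chain-of-charts sketch, and for a real reason: the limit $\Sigma$ produced by local diagonal extraction might a priori be \emph{too small}, with ideal boundary points (in the metric completion $\bar\Sigma$) sitting over interior points of $B_R(0)$. Your sentence ``the number of charts needed to reach $\partial B_R(0)$ is finite'' does not exclude this. The paper proceeds, for each $r<R$, by introducing a partial order on quadruples $X^A=(F^A,\Sigma^A,g,p)$ arising as $\tau$-limits along subsequences $A\subset\mathbb{N}$ of domains $U^A_\alpha\supset B_r^{\Sigma_\alpha}(p_\alpha)$, and invoking Zorn's lemma to pick a maximal $X^r$. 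One then shows: (i) $\partial\Sigma^r\ne\emptyset$ (otherwise $\Sigma^r$ is closed or contains a ray, both ruled out by properness of $F_\alpha$ and the area bound together with the uniform lower volume bound on isothermal balls); (ii) each sublevel set $\bar\Sigma^r_\delta=\{x:d_g(x,\partial\Sigma^r)\ge\delta\}$ is compact (same lower volume bound plus $\mu_g(\Sigma^r)\le V$); and (iii) if some $x_0\in\partial\Sigma^r_{1/j}$ had $|F^r(x_0)|<r-\tfrac{3}{j}$, one could adjoin to $U^{A_r}_\alpha$ the component $V_\alpha$ of $F_\alpha^{-1}(B_{2/j}(y_\alpha))$ through $q_\alpha=\Phi^{A_r}_\alpha(x_0)$, pass to a further $\tau$-limit, and obtain a strictly larger element of $\mathcal{C}_r$, contradicting maximality. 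This yields $F^r(\partial\Sigma^r_{1/j})\subset B_{r-3/j}(0)^c$ and hence properness of $F^r$ in $B_r(0)$; a final diagonal argument over $r\nearrow R$ gives the proper limit in $B_R(0)$. Your sketch gestures at the correct conclusion but does not supply the maximality mechanism, which is the actual content of this step.
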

 Moreover, just as Anderson noticed in \cite[main Lemma 2.2]{A},  Theorem \ref{convergence} can feed back to provide a lower bound estimate of the isothermal radius.

   \begin{prop}\label{isothermal radius}
 For any fixed $V\in \mathbb{R}_{+}$, there exist $\varepsilon_0(V)>0$ and $\alpha_0(V)>0$ such that for any properly immersed Riemannian surface $F:(\Sigma,g,p)\to(\mathbb{R}^n,0)$ satisfying
 $$\mu_g(\Sigma\cap B_1(0))\le V\text{ and }\int_{\Sigma\cap B_1(0)}{|A|}^2d\mu_g\le \varepsilon_0(V),$$
  we have
 $$i_g(x)\ge \alpha_0(V), \forall x\in \Sigma\cap B_{\frac{1}{2}}(0),$$
  where $B_1(0)=\{y\in \mathbb{R}^n||y|<1\}$ and by $\Sigma\cap B_1(0)$ we actually mean the connected component of $\Sigma\cap F^{-1}(B_1(0))$ containing $p$.
   \end{prop}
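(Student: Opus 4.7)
The plan is a blow-up argument combined with Theorem \ref{convergence}. Suppose the conclusion fails: then there exist $V>0$, sequences $\varepsilon_k\to 0$ and $\alpha_k\to 0$, and properly immersed surfaces $F_k:(\Sigma_k,g_k,p_k)\to(\mathbb{R}^n,0)$ satisfying the hypotheses with $\varepsilon_0$ replaced by $\varepsilon_k$, together with points $x_k\in\Sigma_k\cap F_k^{-1}(B_{1/2}(0))$ such that $i_{g_k}(x_k)<\alpha_k$. I would first perform a point-selection of Schoen type on the compact region $F_k^{-1}(\overline{B_1(0)})$. The function
$$h_k(y):=\frac{1-|F_k(y)|}{i_{g_k}(y)}$$
is continuous, vanishes on $F_k^{-1}(\partial B_1(0))$, and at $x_k$ satisfies $h_k(x_k)\ge 1/(2\alpha_k)\to\infty$. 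Hence it attains its maximum at some interior $y_k$. Writing $r_k:=i_{g_k}(y_k)$ and $\rho_k:=1-|F_k(y_k)|$, one has $\rho_k/r_k\to\infty$, and maximality forces
$$i_{g_k}(z)\ge \tfrac12 r_k\quad\text{whenever } |F_k(z)-F_k(y_k)|<\tfrac12\rho_k.$$

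Next I would rescale and translate, setting $\tilde F_k:=\lambda_k(F_k-F_k(y_k))$ with $\lambda_k:=1/r_k$, and $\tilde g_k:=\lambda_k^2 g_k$. Then $\tilde F_k(y_k)=0$, $i_{\tilde g_k}(y_k)=1$, and on any fixed ball $B_R(0)\subset\mathbb{R}^n$, for all sufficiently large $k$ (so that $\lambda_k\rho_k/2>R$), we have the uniform lower bound $i_{\tilde g_k}(z)\ge 1/2$ for every $z\in\tilde F_k^{-1}(B_R(0))$. The total curvature integral is scale-invariant and remains $\le\varepsilon_k\to 0$, while a density monotonicity for immersed surfaces with small total curvature (in the spirit of \cite{LS}, \cite{MS}) provides a scale-invariant area bound of the form $\tilde\mu_k(\tilde F_k^{-1}(B_R(0)))\le C(V)R^2$.

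Applying Theorem \ref{convergence} to the rescaled sequence with $i_0\equiv 1/2$, and diagonalizing as $R\uparrow\infty$, I would extract a proper isometric immersion $\tilde F_\infty:(\Sigma_\infty,g_\infty,y_\infty)\to\mathbb{R}^n$ as limit. The weak $W^{2,2}_{loc}$ convergence together with $\int|\tilde A_k|^2\to 0$ forces $A_\infty\equiv 0$ by lower semi-continuity of the $L^2$-norm, so the limit is complete and totally geodesic, hence the connected image containing the origin is an affine $2$-plane in $\mathbb{R}^n$. In particular $i_{g_\infty}(y_\infty)=+\infty$.

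To close the argument I would invoke the lower semi-continuity of the isothermal radius under the convergence provided by parts $(1)$--$(2)$ of Theorem \ref{convergence}, namely
$$i_{g_\infty}(y_\infty)\le \liminf_{k\to\infty} i_{\tilde g_k}(y_k)=1,$$
obtained by transplanting isothermal charts of $(\Sigma_k,\tilde g_k)$ near $y_k$ to $(\Sigma_\infty,g_\infty)$ via the embeddings $\Phi_k$, using the $L^p_{loc}$-convergence of the metrics and the convergence of complex structures to solve a perturbed Beltrami equation. This contradicts $i_{g_\infty}(y_\infty)=+\infty$. The main obstacle I expect is to justify this lower semi-continuity, along with the scale-invariant density bound, under only $L^p_{loc}$-regularity of the limit metric: both should be consequences of the uniform estimates established en route to Theorem \ref{convergence}, but each requires a careful separate statement.
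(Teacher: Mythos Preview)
Your overall strategy---blow up at a point of small isothermal radius, apply Theorem~\ref{convergence}, identify the limit as a plane, and derive a contradiction from the normalization $i_{\tilde g_k}(y_k)=1$---is exactly the paper's approach. Your Schoen-type point selection is more elaborate than necessary: the paper simply takes $x_k$ realizing $\alpha_k:=\inf_{x\in\Sigma_k\cap B_{1/2}}i_{g_k}(x)$ and rescales by $\alpha_k^{-1}$, which already gives $i_{h_k}\ge 1$ on arbitrarily large balls after rescaling. Either works.

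The substantive gap is precisely the obstacle you flag at the end. Under the bare conclusion of Theorem~\ref{convergence} the metrics converge only in $L^p_{loc}$, and with that regularity the ``perturbed Beltrami'' step is genuinely problematic: the Beltrami coefficient of $\Phi_k^*\tilde g_k$ need not be uniformly small if the coefficients converge only in $L^p$. The paper resolves this not by a general semi-continuity statement but by exploiting the specific hypothesis of the blow-up, namely $\int_{\tilde\Sigma_k}|\tilde A_k|^2\to 0$. By Corollary~\ref{uniformly convergence} the solutions $v_k$ of $-\Delta v_k=K_ke^{2u_k}$ go to $0$ \emph{uniformly}, and together with the smooth convergence of the harmonic parts this upgrades the metric convergence from $L^p_{loc}$ to $C^0_{loc}$ (this is item (d) of Theorem~\ref{intrinsic convergence}). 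With $C^0$ convergence the Beltrami coefficient of $\tilde g_k$ in a fixed chart of the limit plane tends to $0$ uniformly on $D_{1+\sigma}$, and the classical existence theorem for isothermal coordinates then gives $i_{\tilde g_k}(0)\ge 1+\sigma$, contradicting $i_{\tilde g_k}(0)=1$.

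A minor point: your sentence ``transplanting isothermal charts of $(\Sigma_k,\tilde g_k)$ to $(\Sigma_\infty,g_\infty)$'' has the direction backwards. To obtain $i_{g_\infty}\le\liminf i_{\tilde g_k}$ (equivalently, to force $i_{\tilde g_k}$ to be large because the limit is a plane) you must push a large isothermal chart \emph{from the limit to the approximants} via $\Phi_k$ and argue it is nearly conformal for $\tilde g_k$; transplanting in the other direction would only give $i_{g_\infty}\gtrsim 1$, which is already known.
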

    As an application, we use the fundamental convergence theorem to give a blowup approach for Leon Simon's decomposition theorem. A key point is to use a blowup argument to
 reduce the immersing case to the embedding case. Another key point is to use the Poincar\'e inequality to estimate the area of the surface out of the multi-graph, which finally guarantees the multi-graph must be single, see Theorem \ref{decomposition} for more details.

   As another application of the estimation of the isothermal radius, we can prove a non-collapsing version of H\'{e}lein's  compactness theorem. In \cite{H}, H\'{e}lein proved that a sequence of $W^{2,2}$-conformal immersions with uniform bounded Willmore energy will converge weakly to a limit $f_{\infty}$ in $W^{2,2}_{loc}(\Sigma\backslash S)$ outside of finite many singularities $S\subset \Sigma$. But the conformal invariant property of the Willmore energy and the non-compactness of both the intrinsic and extrinsic conformal groups $\mathcal{M}(\Sigma)$  and $\mathcal{M}(\mathbb{R}^n)$ may cause the limit $f_{\infty}$ to collapse. In the case $\Sigma=S^2$, if we assume additionally $vol_{g_k}(S^2)\equiv 1$, then we can use the estimation of isothermal radius to exclude the collapsing caused by the extrinsic conformal group $\mathcal{M}(\mathbb{R}^n)$, see Corollary \ref{non-collapsing}. This result is also obtained in \cite{CL14} by the bubble tree convergence argument.

   The article is organized as following. In sect.\ref{section:prelimaries}, we prepared some important preliminaries. In sect.\ref{section:metric estimate}, we use the lower bound of the isothermal radius to estimate the metric. In sect.\ref{section:convergence}, we prove the fundamental convergence theorem. In sect.\ref{section:estimate the isothermal radius} and sect.\ref{section:non-collapsing Helein}, we prove Proposition \ref{isothermal radius} and give the two applications.

\section{Preliminaries and Notations}\label{section:prelimaries}
\subsection{Hardy estimate}
   In this section, we list some theorems we will quote in this paper. The first is M\"{u}ller and  \v{S}ver\'{a}k's Hardy-estimate \cite[Corollary 3.5.7.]{MS} mentioned above. Under a local isothermal coordinate, we assume $g=e^{2u}(dx^2+dy^2)$, and the Gauss curvature equation is
 $$-\triangle u=*Kd\mu_g=*G^*\omega,$$
 where $G$ is the Gauss map $G:\Sigma\to \mathbb{CP}^{n-1}$, $G(p)=[\frac{e_1(p)+\sqrt{-1}e_2(p)}{2}]$, $\{e_1,e_2\}$ are orthonormal basis of $\Sigma$ at the point $p$ and $\omega$ is  the K\"{a}ller form on $\mathbb{CP}^{n-1}$. In \cite{MS}, when observing that $\omega$ has the algebraic structure of determinant when transgressed to the total space $S^{2n-1}$ of the Hopf fibration $\pi: S^{2n-1}\to \mathbb{CP}^{n-1}$,  M\"{u}ller and  \v{S}ver\'{a}k improved the regularity of $*Kd\mu_g$ from $L^1$ to $\mathcal{H}^1$, the Hardy space, by using the results of  Coifman, Meyer, Lions and Semmes\cite{CLMS}(see also \cite{M}) under the condition
 $$\int|A|^2\le 4\pi\varepsilon$$
 for some $\varepsilon\in (0,1).$
  Moreover, when noticing that in dimension two, the fundamental solution $ln|x|$ belongs to $BMO$, the dual of Hardy space (\cite{FS}), they can solve $-\triangle v=Ke^{2u}$ with $L^{\infty}$-estimation:
   \begin{thm}\label{MSFS}
      For $0<\varepsilon<1$. Assume $\varphi\in W_0^{1,2}(\mathbb{C},\mathbb{CP}^{n})$ satisfies  $\int_{\mathbb{C}}\varphi^*\omega=0$ and that $\int_{\mathbb{C}}|D\varphi\wedge D\varphi|\le2\pi\varepsilon$. Then $*\varphi^*\omega\in\mathcal{H}^1(\mathbb{C})$ with $\|\varphi^*\omega\|_{\mathcal{H}^1}\le c_1C(n,\varepsilon)\|D\varphi\|_{L^2}^2$, where $C(n,\varepsilon)=1+\frac{4n^2(1-{\varepsilon}^{\frac{1}{n}})}{(1-\varepsilon)^2}$. Moreover, the equation $-\triangle u=*\varphi^*\omega$ admits a unique solution $v:\mathbb{C}\to\mathbb{R}$ which is continuous and satisfies:
     $$\mathop{\mathrm{lim}}_{z\to \infty}v(z)=0,$$
     and
     $$\int_{\mathbb{C}}|D^2v|+\{{\int_{\mathbb{C}}{|Dv|}^2}\}^{\frac{1}{2}}+\mathop{\mathrm{max}}_{z\in \mathbb{C}}|v|(z)\le c_2\|\varphi^*\omega\|_{\mathcal{H}^1} \le c_3 C(n,\varepsilon)\|D\varphi\|_{L^2}^2,$$
     where $c_1, c_2$ and $c_3$ are constants independent of $n$ and $\varepsilon$, which will be denoted as a same notation $c$ in the following text.
   \end{thm}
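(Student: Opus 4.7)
The plan is to prove Theorem \ref{MSFS} in three stages: first place $*\varphi^{*}\omega$ in the Hardy space $\mathcal{H}^{1}(\mathbb{C})$ with the prescribed constant, then construct $v$ explicitly by convolution against the Newtonian kernel, and finally extract the $L^{\infty}$, $L^{2}$-gradient and $L^{1}$-Hessian bounds via Hardy--BMO duality and Calder\'on--Zygmund theory on $\mathcal{H}^{1}$.

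\textbf{Step 1 (Hardy estimate).} The key algebraic input is the Hopf fibration $\pi:S^{2n-1}\to\mathbb{CP}^{n-1}$: the Fubini--Study form satisfies $\pi^{*}\omega=d\alpha$ on $S^{2n-1}$, where $\alpha$ is the standard contact $1$-form. Since $\int|D\varphi\wedge D\varphi|\le 2\pi\varepsilon<2\pi$ is below the topological threshold, one can lift $\varphi$ to $\tilde\varphi:\mathbb{C}\to S^{2n-1}$ with $\pi\circ\tilde\varphi=\varphi$, and an iterated lifting/rescaling argument along the $U(1)$-fibres controls $\|D\tilde\varphi\|_{L^{2}}^{2}$ by $C(n,\varepsilon)\|D\varphi\|_{L^{2}}^{2}$; this is the source of the factor $1+\frac{4n^{2}(1-\varepsilon^{1/n})}{(1-\varepsilon)^{2}}$. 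Then $\varphi^{*}\omega=d(\tilde\varphi^{*}\alpha)$, which in coordinates becomes a finite sum of Jacobian determinants $\partial_{x}u\,\partial_{y}w-\partial_{y}u\,\partial_{x}w$ of $W^{1,2}$ functions. The Coifman--Lions--Meyer--Semmes theorem on Jacobians then yields $\|\varphi^{*}\omega\|_{\mathcal{H}^{1}}\le c\,\|D\tilde\varphi\|_{L^{2}}^{2}\le c\,C(n,\varepsilon)\|D\varphi\|_{L^{2}}^{2}$.

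\textbf{Step 2 (construction of $v$).} Define $v$ via the Newtonian potential
\begin{equation*}
v(x)=-\frac{1}{2\pi}\int_{\mathbb{C}}\log|x-y|\,(*\varphi^{*}\omega)(y)\,dy.
\end{equation*}
This is a distributional solution of $-\triangle v=*\varphi^{*}\omega$. Using $\int\varphi^{*}\omega=0$, rewrite it as $v(x)=-\frac{1}{2\pi}\int\log\bigl(|x-y|/|x|\bigr)(*\varphi^{*}\omega)(y)\,dy$, from which $v(x)\to 0$ as $|x|\to\infty$ by dominated convergence, and continuity follows from the $\mathcal{H}^{1}$-integrability of $*\varphi^{*}\omega$.

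\textbf{Step 3 (the three norms).} The $L^{\infty}$-bound is the heart of the matter: for each fixed $x$, $y\mapsto\log|x-y|$ lies in $BMO(\mathbb{C})$ with a universal bound on its seminorm, so the Fefferman--Stein duality gives
\begin{equation*}
|v(x)|\le c\,\|\log|x-\cdot|\|_{BMO}\cdot\|*\varphi^{*}\omega\|_{\mathcal{H}^{1}}\le c\,\|\varphi^{*}\omega\|_{\mathcal{H}^{1}}.
\end{equation*}
The $L^{1}$-Hessian estimate follows since the Riesz transforms are bounded from $\mathcal{H}^{1}$ to $L^{1}$, so $\|D^{2}v\|_{L^{1}}\le c\,\|\triangle v\|_{\mathcal{H}^{1}}$. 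For the $L^{2}$-gradient estimate, I would integrate by parts against $v$ (justified by the decay at infinity), obtaining $\int|Dv|^{2}=\int v\cdot(-\triangle v)\le\|v\|_{L^{\infty}}\|\varphi^{*}\omega\|_{L^{1}}$, and combine with the elementary bound $\|\varphi^{*}\omega\|_{L^{1}}\le\|D\varphi\|_{L^{2}}^{2}$.

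\textbf{Main obstacle.} The genuinely hard part is Step~1, and specifically the sharp constant $C(n,\varepsilon)$. A plain application of CLMS delivers $\mathcal{H}^{1}$-membership with only a universal constant; producing the $\varepsilon$-dependent factor requires exploiting the exactness of $\pi^{*}\omega$ on $S^{2n-1}$ together with a careful iteration that re-expresses $\varphi^{*}\omega$ as a sum of Jacobians adapted to the $U(1)$-fibre structure, so that the smallness of the energy feeds back quantitatively into the $\mathcal{H}^{1}$-norm. Steps~2 and~3 are routine consequences of classical harmonic analysis once Step~1 is secured.
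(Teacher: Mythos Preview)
The paper does not give its own proof of this statement: Theorem~\ref{MSFS} is quoted as a preliminary result, attributed to M\"uller--\v{S}ver\'ak \cite[Corollary~3.5.7]{MS}, with only a one-paragraph description of the mechanism (transgression of $\omega$ to $S^{2n-1}$ via the Hopf fibration, the determinant structure there, CLMS, and the fact that $\log|x|\in BMO$ combined with Fefferman--Stein duality). Your three-step outline reproduces exactly this mechanism and is correct; it is the M\"uller--\v{S}ver\'ak argument, not an alternative to anything the present paper does.

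One small remark on your Step~3: to obtain the intermediate inequality $\bigl(\int|Dv|^{2}\bigr)^{1/2}\le c_{2}\|\varphi^{*}\omega\|_{\mathcal{H}^{1}}$ as stated (rather than only the final bound in terms of $\|D\varphi\|_{L^{2}}^{2}$), after writing $\int|Dv|^{2}\le \|v\|_{L^{\infty}}\|\varphi^{*}\omega\|_{L^{1}}$ you should invoke $\|\varphi^{*}\omega\|_{L^{1}}\le c\|\varphi^{*}\omega\|_{\mathcal{H}^{1}}$ (the embedding $\mathcal{H}^{1}\hookrightarrow L^{1}$) rather than $\|\varphi^{*}\omega\|_{L^{1}}\le \|D\varphi\|_{L^{2}}^{2}$; otherwise the chain of inequalities in the theorem does not close at $c_{2}\|\varphi^{*}\omega\|_{\mathcal{H}^{1}}$.
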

   \subsection{Monotonicity formulae}
   We will also quote Leon Simon's monotonicity formulae \cite{LS}(see also \cite{KR}) in the blowup argument of estimating the isothermal radius. It is also used in our application \Rmnum{2}.
   \begin{thm}\label{monotonicity formulae}
   Assume $\mu\neq 0$ is an integral 2-varifold in an open set $U\subset \mathbb{R}^n$ with square integrable weak mean curvature $H_{\mu}\in L^2(\mu)$ and $B_{\rho_0}(x_0)\subset\subset U$ for some $x_0\in\mathbb{R}^n$ and $\rho_0>0$. Then for $\forall \delta>0$ and $0<\sigma\le\rho\le\rho_0$, we have
     $$\sigma^{-2}\mu(B_{\sigma}(x_0))\le(1+\delta)\rho^{-2}\mu(B_{\rho}(x_0))+(1+\frac{1}{4\delta})W(\mu),$$
   where $W(\mu)=\frac{1}{4}\int_U{|H_{\mu}|}^2d\mu$ is the Willmore energy of $\mu$.
   \end{thm}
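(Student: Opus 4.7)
The plan is to derive the inequality from the classical first variation formula for integral $2$-varifolds, which is the input to Simon's monotonicity formula. My first step is to apply the identity
\[
\int \mathrm{div}_\mu X \, d\mu = -\int H_\mu \cdot X \, d\mu
\]
to the radial vector field $X(x) = \phi(|x-x_0|)(x-x_0)$, where $\phi:[0,\infty)\to[0,1]$ is a smooth, nonincreasing cutoff approximating $\chi_{[0,t]}$. With $r=|x-x_0|$ and $P^{\perp}$ the varifold normal projection, the pointwise expansion
\[
\mathrm{div}_\mu \bigl(\phi(r)(x-x_0)\bigr) = 2\phi(r) + r\phi'(r) - \phi'(r)\frac{|P^{\perp}(x-x_0)|^2}{r}
\]
is the key algebraic input; integrating it against $d\mu$ and letting $\phi\to\chi_{[0,t]}$ produces a differential identity in $t$ for $I(t):=t^{-2}\mu(B_t(x_0))$.

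Integrating this differential identity from $\sigma$ to $\rho$ yields Simon's monotonicity identity: $I(\rho)-I(\sigma)$ equals the nonnegative normal-projection integral $\int_{B_\rho\setminus B_\sigma}|P^{\perp}(x-x_0)|^2/|x-x_0|^4 \, d\mu$ plus a mean-curvature cross term of the shape $-\tfrac{1}{2}\int_\sigma^\rho t^{-3}\int_{B_t}H_\mu\cdot(x-x_0)\,d\mu\,dt$. I would drop the nonnegative normal-projection piece, leaving $I(\sigma)\le I(\rho) + (\text{cross term})$. Cauchy--Schwarz then gives
\[
\Big|\int_{B_t}H_\mu\cdot(x-x_0)\,d\mu\Big| \le t\,\mu(B_t)^{1/2}\Big(\int_{B_t}|H_\mu|^2\,d\mu\Big)^{1/2}.
\]

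Plugging this back and applying Young's inequality $ab \le \delta a^2 + (4\delta)^{-1}b^2$ with $a$ proportional to a power of $\mu(B_\rho)^{1/2}/\rho$ and $b$ proportional to $W(\mu)^{1/2}$ produces the desired splitting: the $\delta a^2$ contribution is absorbed into the $\mu(B_\rho)/\rho^2$ factor to give the coefficient $(1+\delta)$, while the $(4\delta)^{-1}b^2$ contribution combined with the ``free'' $W(\mu)$ term already present in the monotonicity identity produces the coefficient $(1+\tfrac{1}{4\delta})$ in front of $W(\mu)$.

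The main obstacle I expect is purely technical: the field $X$ is not smooth and not compactly supported in $U$, so one must approximate $\phi$ by smooth compactly supported functions and pass to the limit. The hypothesis $B_{\rho_0}(x_0)\subset\subset U$ secures compact support in $U$, and $H_\mu\in L^2$ guarantees finiteness of all limits and justifies the coarea identification of the boundary contributions at $\partial B_t$ for $\mathcal L^1$-a.e.\ $t$. Beyond that the argument is bookkeeping of constants in Young's inequality to match $(1+\delta)$ and $(1+\tfrac{1}{4\delta})$ exactly as stated.
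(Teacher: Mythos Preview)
The paper does not supply a proof of this theorem: it is stated in the Preliminaries (Section~\ref{section:prelimaries}) explicitly as a quotation of Leon Simon's monotonicity formula, with citations to \cite{LS} and \cite{KR}, and is used later as a black box. So there is no ``paper's own proof'' to compare against.

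Your proposal is the standard derivation found in those references and is correct in outline: test the first variation against the cut-off radial field, obtain the differential identity for $t^{-2}\mu(B_t)$, integrate between $\sigma$ and $\rho$, drop the nonnegative perpendicular term, and control the mean-curvature cross term by Cauchy--Schwarz and Young's inequality $ab\le \delta a^2 + (4\delta)^{-1}b^2$ to produce the constants $(1+\delta)$ and $(1+\tfrac{1}{4\delta})$. One small caution on the bookkeeping: in the Simon/Kuwert--Sch\"atzle form the cross terms appear as boundary contributions $\tfrac{1}{2}t^{-2}\!\int_{B_t}H_\mu\cdot(x-x_0)\,d\mu$ evaluated at $t=\sigma$ and $t=\rho$ (plus a $\tfrac{1}{16}\!\int|H|^2$ piece), rather than the integrated-in-$t$ form you wrote; the Young-inequality splitting is applied to each of these boundary terms separately, and this is where the exact coefficients $(1+\delta)$ and $(1+\tfrac{1}{4\delta})$ with $W(\mu)=\tfrac14\int|H_\mu|^2$ fall out. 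This is only a matter of rewriting the identity before estimating, not a gap in the strategy.
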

   Here we explain some notations we use in this paper. For a Riemannian immersion $F:(\Sigma,g)\to \mathbb{R}^n$, $p\in \Sigma$, $x\in \mathbb{R}^n$ and $R>0$, we denote
   $$B_R(x)=\{y\in \mathbb{R}^n||y-x|<R\}, \ \ \ \ B^{\Sigma}_R(p)=\{q\in \Sigma| d_g(q,p)<R\},$$
   and
   $$\Sigma^{R}(p):=\text{ the connected component of } F^{-1}(B_R(F(p))) \text{ containing } p.$$
   We will also abuse the inaccurate but intuitionistic notation $\Sigma\cap B_R(F(p))$ to denote $\Sigma^{R}(p)$.
   \section{$L^{\infty}$--estimate of the metric under isothermal coordinates}\label{section:metric estimate}
\subsection{}
   Basic setting: for an immersing Riemannian surface $F:\Sigma\to \mathbb{R}^n$ with the induced metric $g=dF\otimes dF$, by the existence theorem of isothermal coordinate \cite{Chern 1} and Riemann mapping theorem, there exists an isothermal coordinate $f_0:D_1(0) \to U\subseteq \Sigma \to \mathbb{R}^n$  for a small neighborhood of p such that $f_0(0)=p$, where $D_1=\{z=x+\sqrt{-1}y \in \mathbb{C}:|z|<1\}$ is the unit disk on the complex plain and the scale of $U$ may depend on $(\Sigma,g,p)$. This means $f_0:D_1\to \mathbb{R}^n$ is a conformal immersion, i.e., $f_0^*g=e^{2u(z)}(dx^2+dy^2)$. Furthermore, we have $f_0^*d\mu_g=e^{2u}dx\wedge dy$, $G^*\omega=Kd\mu_g=Ke^{2u}dx\wedge dy$ and ${|DG|}^2_g=\frac{1}{2}{|A|}^2_g$, where $G:\Sigma\to \mathbb{CP}^{n-1}$ is the Gauss map, $\omega$ is the K\"ahler form on $\mathbb{CP}^{n-1}$ and $A$ is the second fundamental form of the immersion $F:\Sigma \to \mathbb{R}^n$. As a consequence, if we let $\varphi=G\circ f_0$, then we get the basic Gauss curvature equation:
   $$-\triangle u=Ke^{2u}=*f_0^*(Kd\mu_g)=*f_0^*G^*\omega=*\varphi^*\omega$$.
   \begin{lemma}\label{extend}
   If $\int_{f_0(D_1)}{|A|}^2_g<+\infty$ and $\int_{D_1}|J_{\varphi}|dx\wedge dy=\int_{D_1}|D\varphi\wedge D\varphi|\le\pi\varepsilon$ for some $\varepsilon\in(0,1)$, then $Ke^{2u}=*\varphi^*\omega$ could be extended to be a function $w\in\mathcal{H}^1(\mathbb{C})$ with
     $$ \|w\|_{\mathcal{H}^1(\mathbb{C})}\le c(n,\varepsilon)\int_{f_0(D_1)}{|A|}^2_g.$$
   Furthermore, the equation $-\triangle u=w$ admits a unique solution $v=-\triangle^{-1}w$ which is continuous on $\mathbb{C}$ and satisfies:
     $$\mathop{\mathrm{lim}}_{z\to \infty}v(z)=0,$$
   and
     $$\int_{\mathbb{C}}|D^2v|+\{{\int_{\mathbb{C}}{|Dv|}^2}\}^{\frac{1}{2}}+\mathop{\mathrm{max}}_{z\in \mathbb{C}}|v|(z)\le c\|w\|_{\mathcal{H}^1} \le c(n,\varepsilon)\int_{f_0(D_1)}{|A|}^2_g.$$
  \end{lemma}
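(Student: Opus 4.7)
The plan is to reduce Lemma \ref{extend} to Theorem \ref{MSFS} by reflecting $\varphi$ through the unit circle to produce a globally defined map on $\mathbb{C}$ whose pullback $\omega$-form has zero total mass and whose total Jacobian energy is bounded by $2\pi\varepsilon$. Concretely, I would define
$$\tilde\varphi(z) = \begin{cases} \varphi(z), & |z|\le 1,\\ \varphi(1/\bar z), & |z|>1. \end{cases}$$
The inversion $z\mapsto 1/\bar z$ fixes the unit circle, so the $H^{1/2}$ traces from the two sides agree and $\tilde\varphi\in W^{1,2}_{\mathrm{loc}}(\mathbb{C},\mathbb{CP}^{n-1})$ with no surface defect on $|z|=1$.

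The two key properties of $z\mapsto 1/\bar z$ to exploit are conformality (so the Dirichlet energy and the pointwise Jacobian density $|D\varphi\wedge D\varphi|$ are invariant under the reflection) and orientation reversal (the Jacobian is $-1/|z|^4<0$). Conformality gives
$$\int_{|z|>1}|D\tilde\varphi|^2\,dx\,dy=\int_{D_1}|D\varphi|^2\,dx\,dy,\qquad \int_{|z|>1}|D\tilde\varphi\wedge D\tilde\varphi|=\int_{D_1}|D\varphi\wedge D\varphi|,$$
so the total energy doubles to at most $2\pi\varepsilon$, matching the hypothesis of Theorem \ref{MSFS}. Orientation reversal combined with the change-of-variables formula yields $\int_{|z|>1}\tilde\varphi^*\omega=-\int_{D_1}\varphi^*\omega$, hence $\int_{\mathbb{C}}\tilde\varphi^*\omega=0$.

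Applying Theorem \ref{MSFS} to $\tilde\varphi$ then produces $w:=*\tilde\varphi^*\omega\in\mathcal{H}^1(\mathbb{C})$ with $\|w\|_{\mathcal{H}^1}\le c\,C(n,\varepsilon)\|D\tilde\varphi\|_{L^2}^2$, together with the unique continuous solution $v$ of $-\Delta v=w$ vanishing at infinity and obeying the stated $L^\infty$, $L^2$-gradient and $L^1$-Hessian estimates. To convert the right-hand side into the geometric quantity claimed in the lemma, I use the identity $|DG|_g^2=\tfrac12 |A|_g^2$ and the two-dimensional conformal invariance of the Dirichlet integral to get
$$\|D\tilde\varphi\|_{L^2(\mathbb{C})}^2=2\|D\varphi\|_{L^2(D_1)}^2=2\int_{f_0(D_1)}|DG|_g^2\,d\mu_g=\int_{f_0(D_1)}|A|^2_g\,d\mu_g.$$
Since on $D_1$ one has $w=*\varphi^*\omega=Ke^{2u}$, this gives the required extension of $Ke^{2u}$ and the Hardy norm bound.

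The one delicate point, which I expect to be the main obstacle, is the behaviour of $\tilde\varphi$ at infinity: conformality preserves the Dirichlet energy, but $\tilde\varphi$ is not compactly supported and need not be literally constant at infinity, so one has to verify that $\tilde\varphi$ lies in whatever function space $W_0^{1,2}(\mathbb{C},\mathbb{CP}^{n-1})$ is used in the statement of Theorem \ref{MSFS}. This is resolved by observing that the argument of \cite{MS} only needs a Hopf-fibration lift of $\tilde\varphi$ into $S^{2n-1}$ with finite Dirichlet energy, and such a lift exists because the energy of $\tilde\varphi$ is finite and the target is compact; the proof then reduces to verifying mean-zero conditions for the lift and invoking the $\mathcal{H}^1$-estimate of \cite{CLMS} for the determinant structure of $\omega$.
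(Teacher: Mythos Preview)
Your proposal is correct and follows essentially the same approach as the paper: reflect $\varphi$ across the unit circle via $z\mapsto 1/\bar z$, use conformality to double the Dirichlet and Jacobian integrals, use orientation reversal to make $\int_{\mathbb{C}}\bar\varphi^*\omega$ vanish, and then apply Theorem~\ref{MSFS}. Your discussion of the $W_0^{1,2}$ issue at infinity is in fact more careful than the paper's own proof, which simply asserts that Theorem~\ref{MSFS} applies to the reflected map without commenting on this point.
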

  \begin{proof}
    Set $\varphi=G\circ f_0:D_1\to\Sigma\to\mathbb{CP}^{n-1}$, and define $\bar{\varphi}:\mathbb{C}\to\mathbb{CP}^{n-1}$ by
    $$
    \bar{\varphi}(z)=
    \begin{cases}
    \varphi(z), z\in D_1,\\
    \varphi(\frac{1}{\bar{z}}), z\in D_1^*=\{z||z|\ge1\}.
    \end{cases}
    $$
    Then we have,
    $$\int_{\mathbb{C}}\bar{\varphi}^*\omega=\int_{D_1}\varphi^*\omega-\int_{D_1}\varphi^*\omega=0,$$
    $$\int_{\mathbb{C}}|D\bar{\varphi}\wedge D\bar{\varphi}|dxdy=2\int_{D_1}|D\bar{\varphi}\wedge D\bar{\varphi}|dxdy\le 2\pi\varepsilon,$$
    and
    \begin{align*}
    \int_{\mathbb{C}}{|D\bar{\varphi}|}^2dxdy=2\int_{D_1}{|D\varphi|}^2dxdy\le\int_{f_0(D_1)}{|A|}^2_gd\mu_g.
    \end{align*}
    The work is done if we define $w:=*\bar{\varphi}^*\omega$ and apply Theorem~\ref{MSFS}.
  \end{proof}
  \begin{rmk}
    If $\int_{f_0(D_1)}{|A|}^2d\mu_g\le4\pi\varepsilon$, then
    \begin{align*}
    \int_{D_1}|D\varphi\wedge D\varphi|dxdy=\int_{D_1}\sqrt{\mathrm{det}{(D\varphi)}^*D\varphi}dxdy
    \le\frac{1}{4}\int_{f_0(D_1)}{|A|}^2d\mu_g\le\pi\varepsilon.
    \end{align*}
     So, in the following text, we may use $\int_{f_0(D_1)}{|A|}^2d\mu_g\le4\pi\varepsilon$ to replace the conditions $\int_{f_0(D_1)}{|A|}^2_g<+\infty$ and $\int_{D_1}|J_{\varphi}|dx\wedge dy=\int_{D_1}|D\varphi\wedge D\varphi|\le\pi\varepsilon$.
  \end{rmk}
  The following corollary is simple but important in the estimation of the isothermal radius.
   \begin{cor}\label{uniformly convergence}
    In the case $\int_{\Sigma_k}{|A_k|}^2_{g_k}d\mu_{g_k}\to 0$, $v_k$ converges to $0$ uniformly on the unit disk $D_1$.
  \end{cor}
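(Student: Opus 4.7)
The plan is to read the uniform estimate off the last inequality of Lemma~\ref{extend} directly, once we have fixed an admissible choice of $\varepsilon$ for large $k$. Writing $\alpha_k:=\int_{\Sigma_k}|A_k|^2_{g_k}d\mu_{g_k}$, the hypothesis is $\alpha_k\to 0$. Since the constraint on the total curvature in Lemma~\ref{extend} (and in the Remark just above) is the open condition $\int_{f_0(D_1)}|A|^2\le 4\pi\varepsilon$ with $\varepsilon\in(0,1)$, I would simply fix once and for all some $\varepsilon_0\in(0,1)$, say $\varepsilon_0=\tfrac{1}{2}$. Then for all $k$ sufficiently large we have $\alpha_k\le 4\pi\varepsilon_0$, so the hypothesis of the lemma is satisfied uniformly in $k$, and $c(n,\varepsilon_0)$ becomes a single constant independent of $k$.

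Applying Lemma~\ref{extend} to each $k$ (from some index onward), we obtain the extension $w_k\in\mathcal{H}^1(\mathbb{C})$ and the unique solution $v_k=-\triangle^{-1}w_k$ with the pointwise bound
\begin{equation*}
\max_{z\in\mathbb{C}}|v_k|(z)\le c(n,\varepsilon_0)\,\alpha_k.
\end{equation*}
Since $\alpha_k\to 0$ and $c(n,\varepsilon_0)$ is fixed, the right-hand side tends to zero, which gives $\|v_k\|_{L^\infty(\mathbb{C})}\to 0$. Restricting to $D_1$ yields uniform convergence $v_k\to 0$ on $D_1$, as claimed.

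There is really no obstacle here beyond bookkeeping: the key point is that the constant $c(n,\varepsilon)$ in Lemma~\ref{extend} depends only on $n$ and $\varepsilon$, not on the particular surface, so once $\varepsilon$ is frozen to an admissible value the bound is linear in the total curvature and the corollary follows immediately. The only subtlety worth flagging is that one must discard finitely many initial terms of the sequence in order to have $\alpha_k\le 4\pi\varepsilon_0$, but this is harmless for a statement about convergence.
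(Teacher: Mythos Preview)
Your argument is correct and is exactly the intended one: the paper gives no separate proof of this corollary because it is immediate from the $L^\infty$ bound in Lemma~\ref{extend} (together with the Remark), precisely as you say. Fixing any $\varepsilon_0\in(0,1)$, discarding finitely many terms so that $\alpha_k\le 4\pi\varepsilon_0$, and then reading off $\max_{\mathbb{C}}|v_k|\le c(n,\varepsilon_0)\alpha_k\to 0$ is all that is required.
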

  The following lemma shows that the lower bound of isothermal radius could rule out the collapsing phenomenon and dominate the metric uniformly.
  \begin{lemma}\label{lemma3}
    Take a local isothermal coordinate $f_0:(D_1,0)\to(U\subset\Sigma,p)$ and denote $f_0^*g=e^{2u(z)}(dx^2+dy^2)$ as before. Assume $\int_{f_0(D_1)}{|A|}^2d\mu_g\le A_0$, $\int_{D_1}|D\varphi\wedge D\varphi|dxdy\le\pi\varepsilon$, $\mathop{\mathrm{area}}_g(f_0(D_1))\le V$ and $d_g(p,f_0(D_1))\ge i_0>0$. Then $u\in L_{loc}^{\infty}(D_1)$ and for any $r\in(0,1)$, there exists a $C=C(n,\varepsilon, i_0, V, A_0,r)$ such that
    $$\mathop{\mathrm{sup}}_{D_r}|u|\le C(n,\varepsilon, i_0,V,A_0,r).$$
  \end{lemma}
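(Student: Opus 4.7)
The plan is to split $u=v+h$ on $D_1$, with $v$ a uniformly bounded Hardy-theoretic part and $h$ harmonic, then to obtain an upper bound on $h$ from the area hypothesis, a pointwise lower bound on $h(0)$ from the isothermal-radius hypothesis via the developing map, and finally to propagate the lower bound across $D_r$ by Harnack's inequality.

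First I would apply Lemma~\ref{extend} to produce $v$ on $\mathbb{C}$ with $\|v\|_{L^\infty}\le c(n,\varepsilon)A_0$ and set $h:=u-v$, so that $h$ is harmonic on $D_1$. Since $h$ is harmonic, $e^{2h}$ is subharmonic, and the area hypothesis gives
\[
\int_{D_1}e^{2h}\,dx\,dy=\int_{D_1}e^{2u}e^{-2v}\,dx\,dy\le e^{2cA_0}V,
\]
so the mean value inequality applied on balls of radius $(1-r)/4$ yields an upper bound $h\le M(n,\varepsilon,V,A_0,r)$ on $D_{(1+r)/2}$. For the lower bound, introduce the flat conformal metric $g_0:=e^{2h}(dx^2+dy^2)$ on $D_1$. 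Since $|u-h|=|v|\le cA_0$, the metrics $g_0$ and $f_0^*g$ are bi-Lipschitz with constant $e^{cA_0}$, so the hypothesis $d_g(p,\partial f_0(D_1))\ge i_0$ converts to $d_{g_0}(0,\partial D_1)\ge r_0:=e^{-cA_0}i_0$.

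Now pick a holomorphic $f$ on $D_1$ with $\operatorname{Re}f=h$ and define the developing map $F(z):=\int_0^z e^{f(\zeta)}\,d\zeta$; then $F$ is a holomorphic local isometry from $(D_1,g_0)$ to $(\mathbb{C},|dw|^2)$ with $F(0)=0$ and $|F'(0)|=e^{h(0)}$. The hard step is to show that $F$ restricts to a biholomorphism $B^{g_0}_{r_0}(0)\to B_{r_0}(0)$: using that $(D_1,g_0)$ is flat, simply connected and $d_{g_0}(0,\partial D_1)\ge r_0$, one lifts every Euclidean straight segment from $0$ of length $<r_0$ via the local diffeomorphism $F$, with the unique lift forced to remain inside $D_1$, to obtain both injectivity on $B^{g_0}_{r_0}(0)$ and surjectivity onto $B_{r_0}(0)$. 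Granted this, $B^{g_0}_{r_0}(0)$ is a simply connected subdomain of $D_1$ containing $0$ with conformal radius $r_0/|F'(0)|$ at the origin, and Schwarz's lemma applied to its Riemann map into $D_1$ forces $r_0/|F'(0)|\le 1$, i.e.\ $h(0)=\log|F'(0)|\ge\log r_0$.

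Finally, $M-h$ is a nonnegative harmonic function on $D_{(1+r)/2}$, so Harnack's inequality produces a constant $C=C(r)$ with $(M-h)(z)\le C(M-h(0))\le C(M-\log r_0)$ for every $z\in D_r$. This yields the lower bound $h\ge M-C(M-\log r_0)$ on $D_r$, and combining with the upper bound on $h$ together with $\|v\|_\infty\le cA_0$ produces the asserted estimate $\sup_{D_r}|u|\le C(n,\varepsilon,i_0,V,A_0,r)$.
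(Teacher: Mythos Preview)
Your proposal is correct and shares the paper's overall architecture: the same decomposition $u=v+h$ with $v$ controlled by Lemma~\ref{extend}, an upper bound on $h$ from the area hypothesis, a lower bound on $h(0)$ from the isothermal-radius hypothesis, and Harnack to spread the lower bound over $D_r$.  The genuine difference is in how the lower bound $h(0)\ge -C$ is obtained.  The paper argues by contradiction (following Chen--Li): if along a sequence $\bar u_k(0)\to-\infty$, then $h_k(0)\to-\infty$ with $h_k$ bounded above, Harnack forces $u_k\to-\infty$ locally uniformly, hence $\mathrm{area}_{g_k}(D_r)\to 0$; a direct radial-integral computation using $d_{g_k}(0,\partial D_1)\ge i_0$ then produces a positive lower bound on $\mathrm{area}_{g_k}(D_{1-\delta}\setminus D_\delta)$, a contradiction.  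Your route is a direct complex-analytic argument: pass to the flat conformal metric $g_0=e^{2h}|dz|^2$, build the developing map $F$, show $F$ identifies $B^{g_0}_{r_0}(0)\subset D_1$ biholomorphically with the Euclidean disk $B_{r_0}(0)$, and apply the Schwarz lemma to the inverse to get $e^{h(0)}=|F'(0)|\ge r_0$.  Your method yields an explicit constant $h(0)\ge\log i_0 - c(n,\varepsilon)A_0$, whereas the paper's compactness argument does not.  One small point: the lifting step you flag as ``hard'' is indeed the crux, and to close it cleanly you should note that if the lift $\gamma$ of a radial segment fails to reach time~$1$, then $\gamma$ must leave every compact subset of $D_1$ (else $|\gamma'|=e^{-h\circ\gamma}|w|$ stays bounded and $\gamma$ extends), producing points $\gamma(t_n)$ with $|\gamma(t_n)|\to 1$ yet $d_{g_0}(0,\gamma(t_n))\le t_n|w|<r_0$, contradicting $d_{g_0}(0,\partial D_1)\ge r_0$; once surjectivity of the lift map $G:B_{r_0}(0)\to D_1$ is known, a short open-and-closed argument in the connected ball $B^{g_0}_{r_0}(0)$ gives $G(B_{r_0}(0))=B^{g_0}_{r_0}(0)$, confirming your biholomorphism claim.
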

  \begin{proof}
    Let $v=-\triangle w$ and $h=u-v$ as above. Then $h$ is harmonic and it is equal to estimate $u$ and $h$ since $v$ has been estimated by Lemma~\ref{extend}.\\
    Step 1. Estimate the upper bound. $\forall x\in D_1$, let $r(x)=d(x,\partial D_1)=1-|x|>0$. Then Jensen's inequality and mean value theorem for harmonic functions imply
    \begin{align*}
    u(x)&=\bar{u}(x)-\bar{v}(x)+v(x)\\
        &\le\frac{1}{\pi r^2(x)}\int_{D_{r(x)}(x)}u(y)+2C(n,\varepsilon)A_0 \\
        &\le\frac{1}{\pi r^2(x)}\ln{\int_{D_{r(x)}(x)}e^{2u(y)}}+2C(n,\varepsilon)A_0\\
        &\le\frac{\ln{V}}{\pi {(1-r)}^2}+2C(n,\varepsilon)A_0=:C(n,\varepsilon,V,A_0,r)
    \end{align*}
    for $x\in D_r$, where $\bar{h}(x)=\frac{1}{\pi r^2(x)}\int_{D_{r(x)}(x)}h(y)$ and $\bar{u}$, $\bar{v}$ are defined similarly.\\
     Step 2. The argument developed in \cite{Jingyi Chern Yuxiang Li} gives a control of $\bar{u}(0)$ from below by $d_g(p,f_0(D_1))$, i.e.,  $\bar{u}(0)\ge -C$ for some constant $C=C(n,\varepsilon,i_0,V,A_0)>0$. We argue by contradiction.
      If there exist a sequence of Riemannian immersions $F_k:({\Sigma}_k, g_k, p_k)\to \mathbb{R}^n$  which admit isothermal coordinates $f_k:(D_1,0)\to (U_k,p_k)$ and satisfy $$\int_{{\Sigma}_k\cap f_k(D_1)}{|A_k|}^2d\mu_{g_k}\le A_0, \int_{D_1}|D{\varphi}_k\wedge D{\varphi}_k|dxdy\le\pi\varepsilon,$$
     $$\mathop{\mathrm{area}}_{g_k}({\Sigma}_k\cap f_k(D_1))\le V,\ d_{g_k}(p_k, \partial U_k)\ge i_0,$$
     but ${\bar{u}}_k(0)=\frac{1}{\pi}\int_{D_1}u_k=C_k\to-\infty$, where $u_k$ is defined by $f_k^*g_k=e^{2u_k(z)}(dx^2+dy^2)$. Then, Lemma~\ref{extend} and step 2. imply
     $$h_k(0)={\bar{h}}_k(0)={\bar{u}}_k(0)-{\bar{v}_k}(0)\le{\bar{u}}_k(0)+C(n,\varepsilon)A_0\to-\infty$$
     and
     $$h_k(x)=u_k(x)-v_k(x)\le C(n,\varepsilon, A_0, V, r)\  \mathrm{for} \ x\in D_r(0).$$
     Now, Harnack's inequality and Lemma~\ref{extend} again imply $u_k=h_k+v_k\rightrightarrows-\infty$ on $D_r$,  which means $\lim\limits_{k\to\infty}\mathrm{area}_{g_k}(D_r)=0$ , for any $r\in(0,1)$. But on the other hand, for $\delta<{\delta}_0(n,\varepsilon, A_0,V,i_0)$ small enough, $d_{g_k}(p_k, \partial U_k)\ge i_0$ implies that $\mathrm{for} \ \forall x\in \partial D_1(0)$,
     \begin{align*}
     i_0&\le d_{g_k}(x,0)\\
          &\le l_{g_k}(\gamma_{0,x}) \ \ \ (\gamma_{0,x}(t)=tx:[0,1]\to D_1)\\
          &=\int_0^1e^{u_k(tx)}|x|dt\\
          &=\int_0^{\delta}e^{u_k(tx)}dt+\int_{\delta}^{1-\delta}e^{u_k(tx)}dt+\int_{1-\delta}^1 e^{u_k(tx)}dt\\
          &\le\delta e^{\frac{\ln{V}}{2\pi {(1-\delta)}^2}+2C(n,\varepsilon)A_0}
          +{\Big(\int_{\delta}^{1-\delta}e^{2u_k(tx)}tdt\Big)}^{\frac{1}{2}}
          {\Big(\int_{\delta}^{1-\delta}\frac{1}{t}dt\Big)}^{\frac{1}{2}}\\
          &+{\Big(\int_{1-\delta}^1e^{2u_k(tx)}tdt\Big)}^{\frac{1}{2}}
          {\Big(\int_{1-\delta}^1\frac{1}{t}dt\Big)}^{\frac{1}{2}}\\
          &\le\frac{i_0}{2}
          +{\Big(\int_{\delta}^{1-\delta}e^{2u_k(tx)}tdt\Big)}^{\frac{1}{2}}
          {\Big(\ln{\frac{1-\delta}{\delta}}\Big)}^{\frac{1}{2}}\\
          &+{\Big(\int_{1-\delta}^1e^{2u_k(tx)}tdt\Big)}^{\frac{1}{2}}
          {\Big(-\ln{(1-\delta)}\Big)}^{\frac{1}{2}},
     \end{align*}
   which further implies
    \begin{align*}
    \big(\frac{i_0}{2}\big)^2\le
    2\Big(\int_{\delta}^{1-\delta}e^{2u_k(tx)}tdt\Big)\Big(\ln{\frac{1-\delta}{\delta}}\Big)
    +2\Big(\int_{1-\delta}^1e^{2u_k(tx)}tdt\Big)\Big(-\ln{(1-\delta)}\Big).
    \end{align*}
    Integrating this on $\theta=x\in[0,2\pi]$, we get
    \begin{align*}
    \frac{2\pi i_0^2}{4}
    &\le 2\int_0^{2\pi}\bigg(\Big(\int_{\delta}^{1-\delta}e^{2u_k(tx)}tdt\Big)\Big(\ln{(1-\delta)}-\ln{\delta}\Big)\\
    &+\Big(\int_{1-\delta}^1e^{2u_k(tx)}tdt\Big)\Big(-\ln{(1-\delta)}\Big)\bigg)d\theta\\
    &\le 2\mathrm{area}_{g_k}(D_{1-\delta}\backslash D_{\delta})(-\ln{\delta})+2V(-\ln{(1-\delta)}).
    \end{align*}
    Take $\delta_0$ small again and we get
    $$\mathrm{area}_{g_k}(D_{1-\delta}\backslash D_{\delta})\ge-\frac{\pi i_0^2}{8\ln{\delta}}>0$$
    for $\delta<{\delta}_0(n,\varepsilon, A_0,V,i_0)$. This contradicts to $\lim\limits_{k\to\infty}\mathrm{area}_{g_k}(D_r)=0$.\\
    Step 3. Finally, Lemma~\ref{extend}, Step 2 and Step 3 imply
    $$\mathop{\mathrm{max}}_{x\in{D_r}}h(x)\le C(n,\varepsilon, A_0,V,r) \  \mathrm{and} \  h(0)>-C(n,\varepsilon, A_0, V, i_0, r).$$
    Using Harnack's inequality and Lemma~\ref{extend} again, we know
    $$u(x)\ge-C(n,\varepsilon, A_0, V, i_0, r), \forall x \in D_r.$$
  \end{proof}
  The following corollary follows immediately since $h$ is harmonic.
  \begin{cor}\label{estimate for h}
  Under the above setting, $\|h\|_{C^k(D_r)}\le C(n,\varepsilon, A_0,V,i_0,r, k)$ for any integer $k$.
  \end{cor}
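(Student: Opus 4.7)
The plan is to exploit the fact that $h=u-v$ is harmonic and then bootstrap from the $L^\infty$ bound already available on a slightly larger disk. Since $-\Delta u = w$ by the Gauss equation and $-\Delta v = w$ by construction in Lemma~\ref{extend}, we have $-\Delta h = 0$ on $D_1$, so $h$ is a genuine harmonic function on the unit disk and we are free to invoke the full strength of interior elliptic regularity for Laplace's equation.

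The first step is to obtain an $L^\infty$ bound on $h$ on a disk strictly larger than $D_r$. Fix an auxiliary radius, for instance $r' = \tfrac{1+r}{2} \in (r,1)$. Applying Lemma~\ref{lemma3} with radius $r'$ gives
\[
\sup_{D_{r'}} |u| \le C(n,\varepsilon, i_0, V, A_0, r).
\]
On the other hand, Lemma~\ref{extend} provides the global bound
\[
\sup_{\mathbb{C}} |v| \le c\,C(n,\varepsilon)\,A_0.
\]
The triangle inequality then yields $\sup_{D_{r'}} |h| \le C(n,\varepsilon, i_0, V, A_0, r)$.

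The second and final step is to upgrade this sup-norm bound to a $C^k$ bound on the smaller disk $D_r$. Because $h$ is harmonic on $D_{r'}$ and $D_r \subset\subset D_{r'}$, the standard mean-value/derivative estimates for harmonic functions give, for each multi-index $\alpha$ with $|\alpha|\le k$,
\[
\sup_{D_r} |\partial^\alpha h| \le \frac{C_k}{(r'-r)^{|\alpha|}}\,\sup_{D_{r'}} |h|,
\]
with $C_k$ depending only on $k$. Combining with the bound from the previous paragraph produces the desired estimate $\|h\|_{C^k(D_r)} \le C(n,\varepsilon, A_0, V, i_0, r, k)$. There is no real obstacle here: the essential analytic work has already been carried out in Lemmas~\ref{extend} and~\ref{lemma3}, and the corollary is a clean application of the harmonic-function interior regularity, which is precisely what the author alludes to in the one-line justification.
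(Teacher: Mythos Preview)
Your proof is correct and follows exactly the approach the paper intends: the one-line justification ``follows immediately since $h$ is harmonic'' is precisely the combination of the $L^\infty$ bounds on $u$ (Lemma~\ref{lemma3}) and $v$ (Lemma~\ref{extend}) with interior derivative estimates for harmonic functions, which you have spelled out carefully. There is nothing to add.
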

\section{Intrinsic and extrinsic convergence}\label{section:convergence}
 In this section, we first use Gromov's compactness theorem and the estimations established in the last section to prove an intrinsic convergence theorem and then use the $W^{2,2}$-estimate for the mean curvature equation to prove an extrinsic convergence theorem. They together form the fundamental convergence Theorem \ref{convergence}.
  \subsection{Intrinsic convergence}\label{Intrinsic convergence}

  \begin{defn}
  \begin{align*}
  \mathcal{E}(n,\varepsilon,i_0,A_0,V)=\{F:(\Sigma,g,p)\to\mathbb{R}^n\  \text{properly immersing} | i_g(\Sigma)\ge i_0>0,\\
   \int_{D_1}|D\varphi \wedge D\varphi|\le \pi\varepsilon,
   \int_{U(p)}{|A|}^2\le A_0,\  \mathrm{\mathrm{area}}_g(U(p))\le V,\forall p\in \Sigma\}
  \end{align*}
  where $\varphi=G\circ f_0$ is the Gauss map in the coordinate defined as in the beginning of last section and $f_0:D_1(0)\to U(p)$ is defined as in the definition of the isothermal radius.
  \end{defn}
  We conclude the lemmas in the last section as below.
  \begin{lemma} \label{L^infty}
  For any $(F:(\Sigma,g,p)\to \mathbb{R}^n)\in\mathcal{E}(n,\varepsilon,i_0,A_0,V)$ and $r\in(0,1)$ and $r\in(0,1)$,
  there exists a constant $C=C(n,\varepsilon,i_0,A_0,V,r)$ s.t.
  $$\|u\|_{L^{\infty}(D_r)}+\|Du\|_{L^2(D_r)}\le C(n,\varepsilon,i_0,A_0,V,r),$$
  where $u$ is defined by $f_0^*g=e^{2u}g_0$ and $f_0:D_1(0)\to U(p)$ is the isothermal coordinate defined above.
  \end{lemma}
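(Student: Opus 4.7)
The statement is essentially a consolidation of the estimates built up in Section~\ref{section:metric estimate}, so the plan is to assemble the pieces rather than prove anything fundamentally new. My plan is to write $u = h + v$ under the isothermal coordinate $f_0:D_1\to U(p)$ realizing $i_g(p)\ge i_0$, where $v=-\Delta^{-1}w$ is the potential produced by Lemma~\ref{extend} and $h=u-v$ is harmonic on $D_1$. The condition $(F:(\Sigma,g,p)\to\mathbb{R}^n)\in \mathcal{E}(n,\varepsilon,i_0,A_0,V)$ tells me that $\int_{D_1}|D\varphi\wedge D\varphi|\le \pi\varepsilon$ and $\int_{U(p)}|A|^2\le A_0$, so Lemma~\ref{extend} directly applies and yields the global bounds $\max_{\mathbb{C}}|v|$ and $\|Dv\|_{L^2(\mathbb{C})}$ in terms of $c(n,\varepsilon)A_0$.

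For the $L^\infty$ estimate on $u$, I would quote Lemma~\ref{lemma3} verbatim: the hypotheses on $i_g(\Sigma)$, $\int|D\varphi\wedge D\varphi|$, $\int|A|^2$ and $\mathrm{area}_g(U(p))$ translate into exactly the assumptions of that lemma (with $d_g(p,\partial f_0(D_1))\ge i_g(p)\ge i_0$), giving $\|u\|_{L^\infty(D_r)}\le C(n,\varepsilon,i_0,V,A_0,r)$.

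For the $L^2$ estimate on $Du$, I would split $Du=Dh+Dv$. The term $Dv$ is controlled globally on $\mathbb{C}$ by Lemma~\ref{extend}, so certainly $\|Dv\|_{L^2(D_r)}\le c(n,\varepsilon)A_0$. For $Dh$, I would appeal to Corollary~\ref{estimate for h}, which already asserts $\|h\|_{C^k(D_{r'})}\le C(n,\varepsilon,A_0,V,i_0,r',k)$ for any intermediate radius $r<r'<1$ and any $k$; in particular, the $C^1$ norm of $h$ on $D_r$ is bounded, so $\|Dh\|_{L^2(D_r)}\le \pi r^2 \|Dh\|_{L^\infty(D_r)}^2)^{1/2}\le C$. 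Adding the two pieces and combining with Step~1 yields the claimed bound.

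There is no serious obstacle here; the lemma is really a restatement/bundling of Lemma~\ref{lemma3} and Corollary~\ref{estimate for h} in the language of the class $\mathcal{E}(n,\varepsilon,i_0,A_0,V)$, isolated so that it can be quoted cleanly in Section~\ref{section:convergence}. The only mild verification is that the definition of $\mathcal{E}$, together with the definition of the isothermal radius $i_g(\Sigma)\ge i_0$, genuinely guarantees the boundary distance hypothesis $d_g(p,\partial f_0(D_1))\ge i_0$ needed to run the contradiction argument in Step~2 of Lemma~\ref{lemma3}; this is immediate from $f_0(D_1)\supset B^\Sigma_{i_0}(p)$.
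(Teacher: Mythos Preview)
Your proposal is correct and matches the paper's approach exactly: the paper introduces this lemma with ``We conclude the lemmas in the last section as below'' and offers no separate proof, treating it as a direct packaging of Lemma~\ref{extend}, Lemma~\ref{lemma3}, and Corollary~\ref{estimate for h}. Your decomposition $u=h+v$, the appeal to Lemma~\ref{lemma3} for $\|u\|_{L^\infty(D_r)}$, and the splitting $\|Du\|_{L^2}\le \|Dh\|_{L^2}+\|Dv\|_{L^2}$ via Corollary~\ref{estimate for h} and Lemma~\ref{extend} are precisely the intended assembly, and your check that $f_0(D_1)\supset B^\Sigma_{i_0}(p)$ supplies the boundary-distance hypothesis is the only verification needed.
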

  The following theorem is a modification of the Fundamental theorem in Petersen's book\cite{P}(See also \cite{C}) in a weaker regularity condition.
  \begin{thm}[Compactness theorem for immersed Riemannian surfaces]\label{intrinsic convergence} Any pointed sequence $\{F_k:(\Sigma_k,g_k,p_k)\to\mathbb{R}^n\}_{k=1}^{\infty}$  in $\mathcal{E}(n,\varepsilon, i_0,A_0,V)$ admits a subsequence$($still denoted as $\{(\Sigma_k,g_k,p_k)\}_{k=1}^{\infty})$ which converges to a complete Riemannian surface $(\Sigma,g,d,p)$ with a continuous Riemannian metric $g$ and a compatible metric $d\sim d_g$ in the following sense
  \begin{enumerate}[$(a)$]
  \item $(\Sigma_k,g_k,p_k)$ converges to $(\Sigma, d,p)$ in pointed Gromov-Hausdorff topology;
  \item $(\Sigma_k,p_k)$ converges to $(\Sigma,p)$ as pointed Riemannian surface$($i.e., the complex structure $($hence the smooth structure$)$ converges$)$;
  \item $\forall p\in(1,+\infty)$, $(\Sigma_k,g_k,p_k)$ converges to $(\Sigma, g,p)$ in pointed-$L^p$ $($hence a.e.$)$ topology.
  \item In general, we have $d\le d_g\le Cd$. But in the case $\int_{\Sigma_k}|A_k|^2d\mu_{g_k}\to 0$, we have $(\Sigma_k,g_k,p_k)$ converges to $(\Sigma,g,p)$ in $C^0_{loc}$ topology and $d_g=d$.
  \end{enumerate}
  \end{thm}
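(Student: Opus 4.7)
The plan is to follow the strategy of the Anderson--Petersen compactness theorem for Riemannian manifolds, but using the isothermal coordinates $f_0:D_1(0)\to U(q)$ produced by the isothermal radius hypothesis as the distinguished chart system, and replacing the usual $C^{k,\alpha}$ harmonic metric bound with the uniform $L^{\infty}$ control on the conformal factor supplied by Lemma \ref{L^infty}. First I would fix, for each $k$, a maximal $(i_0/4)$--separated net $\{q_{k,\alpha}\}_{\alpha\ge 0}\subset \Sigma_k$ with $q_{k,0}=p_k$ and associate to each point the isothermal chart $f_{k,\alpha}:D_1\to U_{k,\alpha}$ from Definition \ref{Isothermal radius}; Lemma \ref{L^infty} then provides a two-sided bound $C^{-1}g_0\le f_{k,\alpha}^{\ast}g_k\le Cg_0$ on $D_{1/2}$, uniformly in $k$ and $\alpha$, so every chart is uniformly bi-Lipschitz onto its image. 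This immediately yields uniform local volume growth, and hence a Gromov--Hausdorff subsequential limit $(\Sigma,d,p)$ by Gromov's precompactness theorem, giving statement (a); the maximality of the net also guarantees that each compact subset of the limit is covered by finitely many charts.

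Next I would transport the smooth and complex structure to the limit by a diagonal extraction across charts. On each chart $D_1$, decompose $u_{k,\alpha}=h_{k,\alpha}+v_{k,\alpha}$ as in Section \ref{section:metric estimate}, with $h_{k,\alpha}$ harmonic and $v_{k,\alpha}=-\Delta^{-1}(*\varphi_{k,\alpha}^{\ast}\omega)$. By the uniform bounds of Theorem \ref{MSFS} and Lemma \ref{L^infty}, the harmonic parts $h_{k,\alpha}$ converge in $C^{\infty}_{\mathrm{loc}}$ after passing to a subsequence, while $v_{k,\alpha}\to v_{\alpha}$ weakly in $W^{2,1}_{\mathrm{loc}}$, strongly in $L^p_{\mathrm{loc}}$ for every finite $p$, and with continuous limit by Theorem \ref{MSFS}; hence $u_{\alpha}=h_{\alpha}+v_{\alpha}$ is continuous. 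For each overlapping pair $(\alpha,\beta)$, the transition $\tau_{k,\alpha\beta}=f_{k,\alpha}^{-1}\circ f_{k,\beta}$ is holomorphic, and by Montel's theorem a further subsequence converges in $C^{\infty}_{\mathrm{loc}}$ to a holomorphic limit $\tau_{\alpha\beta}$; the cocycle relations pass to the limit. These transitions define an abstract Riemann surface $\Sigma$, and setting $\Phi_k\circ f_{\alpha}=f_{k,\alpha}$ on each chart gives smooth embeddings $\Phi_k:\Sigma\to\Sigma_k$. This produces (b), while the relation $\Phi_k^{\ast}g_k=e^{2u_{k,\alpha}}g_0\to e^{2u_{\alpha}}g_0$ in $L^p_{\mathrm{loc}}$ yields (c) with continuous limit metric $g$.

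For (d), the bi-Lipschitz chart bound passes to the limit and shows that any $g$--length of a short path agrees with its Euclidean chart length up to the same constant $C$; chaining this across finitely many charts one obtains $d\le d_g\le Cd$. When $\int|A_k|^2\to 0$, Corollary \ref{uniformly convergence} forces $v_{k,\alpha}\to 0$ uniformly on each $D_r$, so $u_{k,\alpha}\to h_{\alpha}$ uniformly, $e^{2u_{k,\alpha}}\to e^{2h_{\alpha}}$ in $C^0_{\mathrm{loc}}$, and path lengths converge uniformly, which forces $d_g=d$. The step I expect to be most delicate is performing the diagonal extraction \emph{coherently across all overlapping charts simultaneously}, and then verifying that the canonical map from the abstract Riemann surface built from limit charts to the Gromov--Hausdorff limit $(\Sigma,d)$ is a homeomorphism rather than merely a distance-non-increasing map. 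The holomorphicity of the transitions together with the uniform $L^{\infty}$ bound on the conformal factors is what makes this possible: without the $L^{\infty}$ bound the weak $W^{1,2}$ limit of $u_{k,\alpha}$ would fail to be continuous, the transition maps could degenerate, and the gluing would not produce a genuine Riemannian metric tensor.
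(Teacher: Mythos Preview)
Your approach is essentially the same as the paper's: bi-Lipschitz chart bounds from Lemma~\ref{L^infty} feed Gromov's precompactness, Montel on holomorphic transitions gives the limit complex structure, the decomposition $u_{k,\alpha}=h_{k,\alpha}+v_{k,\alpha}$ yields $L^p$ convergence of the conformal factors, and Corollary~\ref{uniformly convergence} upgrades this to $C^0$ when the total curvature vanishes.

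There is one genuine slip. The formula ``$\Phi_k\circ f_{\alpha}=f_{k,\alpha}$'' does \emph{not} define a map $\Phi_k:\Sigma\to\Sigma_k$: on an overlap $U_{\alpha}\cap U_{\beta}$ the two local candidates $\phi_{k,\alpha}:=f_{k,\alpha}\circ f_{\alpha}^{-1}$ and $\phi_{k,\beta}:=f_{k,\beta}\circ f_{\beta}^{-1}$ are different maps, merely $C^{\infty}$-close for large $k$ because the transitions converge. A global $\Phi_k$ must be manufactured by a partition-of-unity gluing of the $\phi_{k,\alpha}$ (this is the paper's Lemma~\ref{gluing lemma}), and only after this does one get an honest embedding with $\Phi_k^{\ast}g_k\to g$ in $L^p_{\mathrm{loc}}$. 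You correctly flagged the coherent-extraction step as delicate, but the sentence as written would not survive; replace it by the gluing argument. The identification of the abstract Riemann surface with the Gromov--Hausdorff limit also needs the explicit surjectivity check for the limit charts $f_s:D_1\to X$, which the paper does by a short diagonal argument using the uniform bi-Lipschitz constants; this is the content behind your second ``delicate'' concern and should be spelled out.
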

  Before proving this theorem, we make some conceptions clear.
  \begin{defn}[Convergence of complex(differential) structure] Assume $\{(\Sigma_k,p_k)\}_{k=1}^{\infty}$ and $(\Sigma,p)$ are all pointed Riemannian($C^{\infty}$, $C^{m+1,\beta}$) surfaces admitting complex($C^{\infty}$, $C^{m+1,\beta}$) structures
  \begin{align*}
  \mathcal{O}_k&=\{f_{ks}:D_r(0)\to U_{ks}\subset\Sigma_k\}_{s=1}^{\infty},\ \bigcup_{s=1}^{\infty}U_{ks}=\Sigma_k,\\
  \mathcal{O}&=\{f_{s}:D_r(0)\to U_{s}\subset\Sigma\}_{s=1}^{\infty},\ \bigcup_{s=1}^{\infty}U_{s}=\Sigma
  \end{align*}
  respectively. (WLOG, we assume $p_k\in U_{k1}$ and $p\in U_1$.) We call the complex ($C^{\infty}$, $C^{m+1,\beta}$) structure $\mathcal{O}_k$ converges to $\mathcal{O}$ in pointed $C^{\omega}(C^{\infty},C^{m+1,\beta})$ topology if  $\forall K\subset\subset \mathrm{Dom}(f_s^{-1}\circ f_t)$, $\exists k_0$ large enough, $\forall k\ge k_0$,
  $$K\subset\subset\mathrm{Dom}(f_{ks}^{-1}\circ f_{kt})\ \ \ \ \mathrm{and}\ \ \ \  f_{ks}^{-1}\circ f_{kt}\xrightarrow[]{C_c^{\omega}(C_c^{\infty},C_c^{m+1,\beta})}f_s^{-1}\circ f_t.$$
  In this case, we also call the sequence$\{(\Sigma_k,p_k)\}_{k=1}^{\infty}$ converges to $(\Sigma,p)$ as pointed Riemannian($C^{\infty}$, $C^{m+1,\beta}$) surface.
  \end{defn}
  \begin{rmk}
  Assume $\mathcal{O}_k$ and $\mathcal{O}$ are complex structures. Then by Montel's theorem, $\mathcal{O}_k\to\mathcal{O}$ in $C^{\omega}$ topology $\Leftrightarrow$ $\mathcal{O}_k\to\mathcal{O}$ in $C^0$ topology(after passing to a subsequence).
  \end{rmk}
  \begin{defn}[$L^p$ convergence]
  \begin{enumerate}[$1)$]
  \item Assume $A$ is a compact set in a differential manifold $M$, $\{f_k\}_{k=1}^{\infty}$ and $f$ are functions on $M$. We call $f_k$ converges to $f$ in $L^p(A)$ (a.e.) if there exists a finite coordinate cover $\{x_s:U_s\to\tilde{U}_s\subset\mathbb{R}^n\}_{s=1}^{N}$ with $\cup_{s=1}^N U_s\supset A$ s.t.
       $$\tilde{f}_{ks}=f_k\circ x_s^{-1} \xrightarrow[]{L^p_{loc}(\tilde{U}_S)}\tilde{f}_s=f\circ x_s^{-1}, \forall 1\le s\le N.$$
       It is easy to check this definition does not depend on the choice of coordinates.
  \item For Riemannian metrics $g_k$ and $g$ on $M$, we say $g_k\xrightarrow[]{L^p(A)}g$ if for the above coordinates $x_s$, the coefficients of the tensors defined by $x_s^{-1*}g_k=g^s_{k\alpha\beta}dx^{\alpha}dx^{\beta}$ and $x_s^{-1*}g=g^s_{\alpha\beta}dx^{\alpha}dx^{\beta}$ satisfies $$\sum_{1\le\alpha,\beta\le n}\|g^s_{k\alpha\beta}-g^s_{\alpha\beta}\|_{L^p(K)}\to 0,\forall 1\le s\le N, \forall K\subset\subset \tilde{U}_s.$$
  \item Assume $(M_k,g_k,p_k)$ and $(M,g,p)$ are complete Riemannian manifolds with continuous Riemannian metrics. We say $(M_k,g_k,p_k)\xrightarrow[]{pointed-L^p}(M,g,p)$ if $\forall R >0,\exists\Omega\supset B^{M}_R(p)\subset M$ and embedding $F_k:\Omega\to M_k$ for $k$ large enough s.t. $B^{M_k}_R(p_k)\subset F_k(\Omega)\subset M_k$ and $$F_k^*g_k\xrightarrow[]{L^p(\Omega)}g.$$
   \end{enumerate}
  \end{defn}
  \begin{proof}[Proof of Theorem~\ref{intrinsic convergence}]
 \  \\Step 1. (Gromov-Haustorff convergence)

   In the first paragraph we omit the footprint $k$. Let $f_0:D_1(0)\to U(p)$ be an isothermal coordinate and define $u$ by  $f_0^*g=e^{2u}g_0$. Then, we have $\|u\|_{L^{\infty}(D_r)}\le C(r)=C(n,\varepsilon, i_0, A_0,V,r)$ by Lemma~\ref{L^infty}, hence for any curve $\gamma:[0,1]\to D_r$ with $\gamma(0)=x,\gamma(1)=y$, we have
   $$e^{-C(r)}l_{g_0}(\gamma)\le l_g(f_0(\gamma))\le e^{C(r)}l_{g_0}(\gamma),$$
   which means
  \begin{enumerate}[(a)]\setlength{\itemsep}{-\itemsep}
  \item $d(f_0(x),f_0(y))\le e^{C(r)}|x-y|, \forall x, y \in D_r(0)$,\\
  \item $d(f_0(x),f_0(y))\ge e^{-C(r)}\mathop{\mathrm{min}}\{|x-y|,2r-|x|-|y|\}, \forall x,y \in D_r(0).$
  \end{enumerate}
  The same argument as in \cite[sec 10.3.4]{P} implies the capacity estimate, i.e., $\forall R>0$ and $\forall ((\Sigma,g,p)\to\mathbb{R}^n)\in \mathcal{E},\exists N(\alpha)=N(\alpha,R,r,C(r))$ and $\delta=\frac{1}{10}e^{-C(\frac{r+1}{2})}r$ s.t. $\mathop{\mathrm{Cap}_{B_R(p)}(\alpha)}\le N(\alpha)$ for $\forall \alpha\le{\alpha}_0=\delta$, where the capacity $\mathop{\mathrm{Cap}_{B_R(p)}(\alpha)}$ is defined by $$\mathop{\mathrm{Cap}_X(\alpha)}=\mathrm{ maximum\ number\ of\ disjoint}\ \frac{\alpha}{2}-\mathrm{balls\  in }\  X$$ for compact metric space $X$.
  So, for a sequence $\{(\Sigma_k,g_k,p_k)\to\mathbb{R}^n\}_{k=1}^{\infty}\subset\mathcal{E}$, choosing conformal coordinates covering $\{f_{ks}:D_1(0)\to U_{ks}(p^k_s)\subset \Sigma_k\}_{k,s=1}^{\infty}$ s.t. $f_{ks}(0)=p_s^k$, $p^k_1=p_k$, $U_{ks}\supset B^{\Sigma_k}_{i_0}(p^k_s)$, $\Sigma_k=\cup_s f_{ks}(D_r(0))$ and $B_{l\cdot\frac{\delta}{2}}(p_k)\subset\cup_s^{N^l} f_{ks}(D_r(0))$, Gromov's compactness theorem\cite{G}(see also \cite[sec 10.1.4]{P} then guarantees the sequence $\{(\bar{B}_{l\cdot\frac{\delta}{2}(p_k)},g_k,p_k)\}_{k=1}^{\infty}$ converges(after passing to a subsequence) to a metric space $(\bar{B}_{l\cdot\frac{\delta}{2}}(p),d_l,p)$ in pointed Gromov-Haustorff topology. W.L.O.G., one could assume $(\bar{B}_{l\cdot\frac{\delta}{2}}(p),d_l,p)\subset (\bar{B}_{{l+1}\cdot\frac{\delta}{2}}(p),d_{l+1},p)$. After taking direct limit, we have
  $$(\Sigma_k,g_k,p_k)=\lim_{\longrightarrow}\bar{B}_{l\cdot\frac{\delta}{2}}(p_k)\xrightarrow[]{p-GH}
  \lim_{\longrightarrow}\bar{B}_{l\cdot\frac{\delta}{2}}(p)=:(X,d,p).$$

  From now on, we assume all $(\Sigma_k,g_k,p_k)$ and $(\Sigma,d,p)$ are in a same metric space $Y$ locally since Gromov-Haustorff convergence is equal to Haustorff convergence after passing to a subsequence.\\
  Step 2.(Convergence of complex structure)

  Assume $f_{ks}:D_1(0)\to U_{ks}(p^k_s)(\hookrightarrow Y)$ are the isothermal coordinates taken above and $e^{2u_{ks}}g_0=f_{ks}^*g_k=\langle df_{ks},df_{ks}\rangle=({|\frac{\partial f_{ks}}{\partial x}|}^2+{|\frac{\partial f_{ks}}{\partial y}|}^2)g_0$. Then, Lemma~\ref{L^infty} implies $\|\nabla f_{ks}\|_{L^{\infty}(D_r)}\le e^{\|u_{ks}\|_{L^{\infty}(D_r)}}\le e^{C(r)}$, i.e., $\{f_{ks}\}_{k=1}^{\infty}$ are all local Lipschitz  with uniform Lipschitz constant. So Arzela-Ascoli's lemma implies (after passing to a subsequence) $f_{ks}\xrightarrow[]{C^{0,\beta}(D_r)}f_s:D_r(0)\to Y$. Furthermore, we have $$d(f_s(x),f_s(y))=\lim_{k\to\infty}d(f_{ks}(x),f_{ks}(y))\le e^{C(r)}|x-y|,$$
   and
  $$d(f_s(x),f_s(y))\ge e^{-C(r)}\mathop{\mathrm{min}}\{|x-y|,2r-|x|-|y|\}\ge e^{-C(r)}\frac{r-\sigma}{\sigma}|x-y|,$$ for $x,y\in D_{\sigma}(0)\subset\subset D_r(0)$, i.e., $f_s$ is local bilipschitz with $\mathrm{Lip}_{D_r}f_s\le e^{C(r)}$ and hence also injective. If we define $p_s=\lim_{k\to \infty}p_{ks}$, then when noticing that all $X_k$ are length spaces, one know $U_{ks}(p_s^k)$ converges to some $U_s(p_s)\supset B^X_{i_0}(p_s)\subset X$ in Haustorff topology as subsets in $Y$. So $f_{ks}\xrightarrow[]{C^0}f_s:\bar{D}_r(0)\to Y$ implies $\mathrm{Im}(f_s)\subset X$, i.e., $f_s:(D_1(0),0)\to(U_s(p_s))\subset X$ is an injective map from a compact space to a Haustorff space, hence is an embedding.

  Moreover, we claim $f_s:D_1(0)\to U_s(p_s)$ is surjective (hence homeomorphic). In fact, for $\forall x\in U_s(p_s)=\cup_r\lim_{k\to\infty}f_{ks}(D_r)$ (Haustorff convergence as subsets in $Y$), $\exists r\in(0,1)$ and $x_k=:f_{ks}(a_k)\in f_{ks}(D_r(0))$ s.t. $x_k\to x$. W.L.O.G., we can assume $a_k\to a\in \bar{D}_r(0)$. Then
  \begin{align*}
  0\le d(f_s(a),x)&\le d(f_s(a_k),f_s(a))+d(f_s(a_k),x_k)+d(x_k,x)\\
  &\le e^{C(r)}|a_k-a|+d(f_s(a_k),f_{ks}(a_k))+d(x_k,x)\to 0,
   \end{align*}
   since $f_{ks}\to f_s$ uniformly on compact subsets of $D_1(0)$. So, $x=f_s(a)$ and $f_s$ is surjective. This means $\Sigma:=X$ is a topological manifold.

  To construct a complex structure on $X$, we consider the transport function $f_{ks}^{-1}\circ f_{kt}$ with domain $\mathrm{Dom}(f_{ks}^{-1}\circ f_{kt})\to\mathrm{Dom}(f_{s}^{-1}\circ f_{t})$ in Haustorff topology as subsets in $\mathbb{C}$. We have $d(f_{ks}^{-1}\circ f_{kt}(z),f_{s}^{-1}\circ f_{t}(z))\le d(f_{ks}^{-1}\circ f_{kt}(z),f_{ks}^{-1}\circ f_{t}(z))+d(f_{ks}^{-1}\circ f_{t}(z),f_{s}^{-1}\circ f_{t}(z))\to 0$ uniformly on compact subsets of $\mathrm{Dom}(f_{s}^{-1}\circ f_{t})$ since $f_{kt}\xrightarrow[]{C_c^0}f_t$ and $f_{ks}^{-1}$ are locally uniformly bilipschitz. This means
  $$f_{ks}^{-1}\circ f_{kt}\xrightarrow[]{C_c^0} f_s^{-1}\circ f_t\  \mathrm{on} \ \mathrm{Dom}(f_s^{-1}\circ f_t).$$
  But we know $f_{ks}^{-1}\circ f_{kt}$ is analytic since $f_{ks}$ and $f_{kt}$ are both conformal coordinates in their intersection domain, so Montel's theorem implies $f_s^{-1}\circ f_t$ is also analytic on its domain. That means $\mathcal{O}=\{f_s:D_1(0)\}_{s=1}^{\infty}$ is a complex structure on $\Sigma$ and $$(\Sigma_k,\mathcal{O}_k,p_k)\xrightarrow[]{C^{\omega}}(\Sigma,\mathcal{O},p),$$
  as pointed Riemannian surface, where $\mathcal{O}_k=\{f_{ks}:D_r(0)\to U_{ks}(p^k_s)\subset\Sigma_k\}_{s=1}^{\infty}$.\\
  Step 3.(Riemannian metric on $\Sigma$)

   For the isothermal coordinate $f_{ks}:D_1\to U_{ks}\subset\Sigma_k\to\mathbb{R}^n$ with pull back metric represented as $f_{ks}^*g_k=e^{2u_{ks}}g_0$. Recall $-\triangle u_{ks}=K_{k}e^{2u_{ks}}=w_{ks}$ in $D_1$ for some $w_{ks}\in\mathcal{H}^1(\mathbb{C})$. As before, we define $v_{ks}=-{\triangle}^{-1}w_{ks}$ and $h_{ks}=u_{ks}-v_{ks}$. Then, $(\Sigma_k\to\mathbb{R}^n)\in \mathcal{E}$ and Lemma~\ref{extend} implies $$\|w_{ks}\|_{\mathcal{H}^1(\mathbb{C})}\le C(n,\varepsilon,A_0,i_0).$$
   Furthermore, by Lemma~\ref{extend}, we have
   $$\|v_{ks}\|_{L^{\infty}(\mathbb{C})}+\|v_{ks}\|_{W^{1,2}(\mathbb{C})}\le C\|w_{ks}\|_{\mathcal{H}^1(\mathbb{C})}\le C(n,\varepsilon,A_0,i_0).$$
   Now, Rellich's lemma and the weak compactness of $\mathcal{H}^1(\mathbb{C})$(see \cite[chap 3.5.1]{S}) imply there exist $w_s\in \mathcal{H}^1(\mathbb{C})$ and $v_s\in W^{1,2}(\mathbb{C})$ s.t.
   $$w_{ks}\rightharpoonup w_s \ \mathrm{as\ distribution}, \ and\ v_{ks}\xrightarrow[]{L^p(\mathrm{hence}\ a.e.)}v_s,\ \forall p\in(1,+\infty).$$
   Moreover, for any $\varphi\in C_c^{\infty}(\mathbb{C})$, we have
   $$\int_{\mathbb{C}}\nabla v_s \nabla \varphi=\lim_{k\to \infty}\int_{\mathbb{C}}\nabla v_{ks} \nabla \varphi=\lim_{k\to\infty}\int_{\mathbb{C}}w_{ks}\varphi=\int_{\mathbb{C}}w_s\varphi,$$
   i.e., $v_s\in W_0^{1,2}(\mathbb{C})$ satisfies the weak equation $-\triangle v_s=w_s$ in $\mathbb{C}$. And weak lower semi-continuity of the norm of the Banach space $\mathcal{H}^1=(VMO)^*$ imply $\|w_s\|_{\mathcal{H}^1(\mathbb{C})}\le\liminf_{k\to\infty}\|w_{ks}\|_{\mathcal{H}^1(\mathbb{C})}\le C(n,\varepsilon, A_0,i_0)$. So, by Lemma~\ref{extend} again, we get $v_s\in C^0(\mathbb{C})$ and
   $$\|v_s\|_{L^{\infty}(\mathbb{C})}+\|v_s\|_{W^{1,2}(\mathbb{C})}\le C \|w_s\|_{\mathcal{H}^1(\mathbb{C})}\le C(n,\varepsilon, A_0,i_0).$$
   On the other hand, by Corollary~\ref{estimate for h}, there exists $h_s$ harmonic on $D_1$ such that $h_{ks}\xrightarrow[]{C_c^{\infty}(D_1)}h_s$. Denote $u_s:=h_s+v_s$. Then we get $u_{ks}\xrightarrow[]{L^p_{loc}(D_1)}u_s$ and $u_s\in C^0(D_1)$ with $\|u_s\|_{L^{\infty}(D_r)}\le C(n,\varepsilon,A_0,i_0,V, r), \forall r \in(0,1)$.

   With this $u_s$, we can construct a continuous local metric $g$ on $\Sigma$ by defining
   $$g={(f_s^{-1})}^*(e^{2u_s}g_0),$$
   where $f_s:D_1\to U_s(p_s)$ is a coordinate in the complex structure $\mathcal{O}$ constructed in Step 2.  In fact, Step 2. also claim $\mathcal{O}_k\to\mathcal{O}$, i.e., $f_{ks}^{-1}\circ f_{kt}\xrightarrow[]{C_c^{\infty}}f_s^{-1}\circ f_t$, from which we can get ${(f_s^{-1})}^*(e^{2u_s}g_0)={(f_t^{-1})}^*(e^{2u_t}g_0)$ a.e.(hence everywhere since both are continuous) on their common domain, i.e., the metric $g$ is globally well defined. Moreover, $u_{ks}\to u_s\ a.e.$ on $D_1$ and $\|u_s\|_{L^{\infty}(D_r)}\le C(n,\varepsilon,A_0,i_0,V, r)$ imply $\lim_{k\to \infty}l_{g_k}(\gamma)=l_g(\gamma), \forall \gamma \in C^1([0,1],\Sigma)$ by dominate convergence theorem. Thus the induced metric $d_g$ is compatible with the limit metric $d$. More precisely, we have
    $$d_g(x,y)=\inf_{\gamma} l_g(\gamma)=\inf_{\gamma}\lim_{k\to\infty}l_{g_k}(\gamma)\ge \lim d_{g_k}(x,y)=d(x,y), \forall x,y \in \Sigma,$$
    where we use the Gromov-Haustorff convergence result from Step 1. in the last equation. On the other hand, since $\|u_{ks}\|_{L^{\infty}(D_r)}+\|u_s\|_{L^{\infty}(D_r)}\le C(n,\varepsilon,A_0,i_0,V, r)$, one know all the metrics are uniformly equivalent to the Euclidean metric in every coordinate and $$d_g\le C d$$ for some $C=C(n, \varepsilon, A_0, i_0,V)$.

    Moreover, in the case $\int_{\Sigma_k}|A_k|^2d\mu_{g_k}\to 0$, by Corollary \ref{uniformly convergence}, we know $u_{ks}$ converges to $u_s$ in $C^0_{loc}(D_1)$.So for any $\delta>0$, $u(x)-\delta\le u_k(x)\le u(x)+\delta$ for large $k$ independent on $x\in D_{r}(0)$. For any $\gamma:[0,1]\to D_r(0)$,
    $$\int_0^1 e^{-\delta}e^{u\circ \gamma}|\gamma'|\le \int_0^1 e^{u_k\circ \gamma}|\gamma'|\le\int_0^1 e^{\delta}e^{u\circ \gamma}|\gamma'|.$$
     Since the last estimate is uniform for all $\gamma$, we can take infimum for $\gamma$ joining $x$ and $y$ and then let $k\to \infty$ and $\delta\to 0$ to get
    $$d_{g}(x,y)=\lim_{k\to \infty}d_{g_k}(x,y)=d(x,y).$$
   Step 4.($L^p$-convergence of the metric structure)

   By gluing the local diffeomorphisms $\phi_{ks}=f_{ks}\circ f_s^{-1}:U_s\to D_1\to U_{ks}$ which converge to identity together by partition of unity, we get the following global description of pointed-convergence of differential structure.
   \begin{lemma}\label{gluing lemma}
   If $(\Sigma_k, \mathcal{O}_k,p_k)$ converges to $(\Sigma, \mathcal{O},p)$ as pointed differential surfaces, then for any fixed $l$, there exist differential maps $\Phi_{kl}:{\Omega}_l=\cup_{s=1}^{l}U_s\to{\Omega}_{kl}=\cup_{s=1}^l U_{ks}$ for $k$ large enough, such that $\Phi_{kl}$ are embeddings  when restricted to compact subsets of $\Omega_l$.
   \end{lemma}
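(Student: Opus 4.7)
The plan is to glue together the local ``identity candidates'' $\phi_{ks}:=f_{ks}\circ f_s^{-1}:U_s\to U_{ks}$ via a partition of unity. The key observation is that the hypothesis $f_{ks}^{-1}\circ f_{kt}\to f_s^{-1}\circ f_t$ in $C^\infty_c$ implies that, when read in any chart $f_{kt}$ of $\Sigma_k$, every $\phi_{ks}$ converges in $C^m_{\mathrm{loc}}$ to the coordinate identity $f_t^{-1}$ on $U_s\cap U_t$; in particular, all the candidates are $C^m_{\mathrm{loc}}$-close to one another for $k$ large.

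First I would fix a compact $K\subset\subset\Omega_l$, choose slightly shrunk open sets $U'_s\subset\subset U_s$ whose union still covers $K$, and take a partition of unity $\{\chi_s\}_{s=1}^l$ subordinate to $\{U_s\}$ on a neighborhood of $K$. I would then define $\Phi_{kl}$ inductively in $s$. Start with $\Phi_k^{(1)}:=\phi_{k1}$ on a neighborhood of $\overline{U'_1}$. Given $\Phi_k^{(s)}$ defined on an open set $V_s\supset\overline{U'_1\cup\cdots\cup U'_s}$ with $f_{kt}^{-1}\circ\Phi_k^{(s)}\circ f_t\to\mathrm{id}$ in $C^m_{\mathrm{loc}}$ for every chart $f_{kt}$ meeting $V_s$, extend to $V_{s+1}:=V_s\cup U'_{s+1}$ by the convex combination taken inside the chart $f_{k,s+1}$:
\begin{equation*}
\Phi_k^{(s+1)}(x):=f_{k,s+1}\!\left((1-\rho_{s+1}(x))\,f_{k,s+1}^{-1}\bigl(\Phi_k^{(s)}(x)\bigr)+\rho_{s+1}(x)\,f_{k,s+1}^{-1}\bigl(\phi_{k,s+1}(x)\bigr)\right)
\end{equation*}
for $x\in V_s\cap U_{s+1}$, together with $\Phi_k^{(s+1)}:=\phi_{k,s+1}$ on $U'_{s+1}\setminus V_s$ and $\Phi_k^{(s+1)}:=\Phi_k^{(s)}$ on $V_s\setminus U_{s+1}$. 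Here $\rho_{s+1}$ is a smooth cutoff with $\rho_{s+1}\equiv 0$ near $V_s\setminus U_{s+1}$ and $\rho_{s+1}\equiv 1$ on $U'_{s+1}\setminus V_s$. The formula is well-defined because, for $k$ large, both $\Phi_k^{(s)}(x)$ and $\phi_{k,s+1}(x)$ are $C^0$-close to $x\in U_{s+1}$ and hence lie in $U_{k,s+1}=\mathrm{Im}(f_{k,s+1})$; and the three pieces glue smoothly on their common overlaps because $\rho_{s+1}$ is supported in $U_{s+1}$.

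Setting $\Phi_{kl}:=\Phi_k^{(l)}$, in any chart $f_t$ meeting $K$ the composition $f_{kt}^{-1}\circ\Phi_{kl}\circ f_t$ is a convex combination in the disk of maps converging to the identity in $C^m_{\mathrm{loc}}$, and therefore itself converges to the identity. For $k$ large the Jacobian is close to $I$, so the inverse function theorem makes $\Phi_{kl}$ a local diffeomorphism on $K$; combined with $C^0$-closeness to the identity and compactness of $K$, this rules out collisions between distinct points, giving injectivity and hence the required embedding.

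The main obstacle is maintaining the inductive invariant that $\Phi_k^{(s)}$, read in \emph{every} chart it meets, converges to the coordinate identity in $C^m_{\mathrm{loc}}$; this is what guarantees that the convex combination at the $(s+1)$-st step can be taken unambiguously inside the single chart $f_{k,s+1}$, since it forces both of its arguments to land in that chart for $k$ large.
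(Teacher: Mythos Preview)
Your proposal is correct and follows essentially the same approach as the paper: both glue the local candidates $\phi_{ks}=f_{ks}\circ f_s^{-1}$ by taking convex combinations in a target chart, using that all candidates converge to the coordinate identity when read in any chart, and then proceed by induction over $s=1,\dots,l$. The organizational differences are minor: the paper averages two maps at a time with a partition of unity $\{\lambda_1,\lambda_2\}$ subordinate to $\{U_s,U_t\}$, while you use a single cutoff $\rho_{s+1}$ to interpolate between the already-built $\Phi_k^{(s)}$ and the new $\phi_{k,s+1}$; and you are more explicit than the paper about why the resulting map is an embedding on compacta (Jacobian close to $I$ gives a local diffeomorphism, and $C^0$-closeness to the identity on a compact set rules out global collisions). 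One small imprecision to clean up: when you say $\Phi_k^{(s)}(x)$ is ``$C^0$-close to $x$'' you are comparing points of $\Sigma_k$ and $\Sigma$; what you mean (and what your inductive invariant actually gives) is that $f_{k,s+1}^{-1}\bigl(\Phi_k^{(s)}(x)\bigr)$ is close to $f_{s+1}^{-1}(x)$ in $D_1$, which is exactly what is needed for the convex combination to make sense.
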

   \begin{proof}(It can be found in \cite[sec 10.3.4]{P}, we write it here for the convenience of readers.)
   Assume $\mathcal{O}_k=\{f_{ks}:D_1\to U_{ks}\}_{s=1}^{\infty}$ and $\mathcal{O}=\{f_s:D_1\to U_s\}_{s=1}^{\infty}$. Define $\phi_{ks}:f_{ks}\circ f_s^{-1}:U_s\to D_1\to U_{ks}$. Then for $t\neq s$, if $U_s \cap U_t\neq \varnothing$, when putting $\phi_{kt}:U_t\to U_{kt}$ in local coordinates $f_s:D_1\to U_s$ and $f_{ks}:D_1\to U_{ks}$ of $\Sigma$ and $\Sigma_k$, we have
   $$\tilde{\phi}_{kt}=f_{ks}^{-1}\circ \phi_{kt}\circ f_s=f_{ks}^{-1}\circ f_{kt}\circ f_t^{-1}\circ f_s\xrightarrow[]{C_c^{\infty}}f_s^{-1}\circ f_t \circ f_t^{-1} \circ f_s=id.$$
   That is, the local map $\phi_{kt}$ between $\Sigma$ and $\Sigma_k$ converges smoothly to identity  w.r.t. the differential structures $\mathcal{O}$ and $\mathcal{O}_k$. That is, if we denote $\hat{\phi}_{ks}=f_{ks}^{-1}\circ \phi_{ks}$, then for any compact subset $ K\subset \mathrm{Dom}(\phi_{ks})\cap \mathrm{Dom}(\phi_{kt})=U_s\cap U_t$ and integer $m$, $$\|\hat{\phi}_{kt}-\hat{\phi}_{ks}\|_{C^{m}(K)}\le\|\hat{\phi}_{kt}-id\|_{C^{m}(K)}+\|id-\hat{\phi}_{ks}\|_{C^{m}(K)} \to 0, \text{ as }k\to +\infty.$$
   Now, choose a partition of unity $\{\lambda_1, \lambda_2\}$ for $\{U_s, U_t\}$, i.e., smooth functions $\lambda_1,\ \lambda_2$  on $\Sigma$ with $\mathrm{supp}\lambda_1\subset U_s$, $\mathrm{supp}\lambda_2\subset U_t$ and $\lambda_1+\lambda_2=1$ on $U_s\cup U_t$. Then $\lambda_1=1$ on $U_s\backslash U_t$ and $\lambda_2=1$ on $U_t \backslash U_s$. Let $\hat{\Phi}_k=\lambda_1\hat{\phi}_{ks}+\lambda_2\hat{\phi}_{kt}$. Then $\hat{\Phi}_k-\hat{\phi}_{ks}=(\lambda_1-1)\hat{\phi}_{ks}+\lambda_2\hat{\phi}_{kt}=\lambda_2(\hat{\phi}_{kt}-\hat{\phi}_{ks})
   \to 0$ in $C_c^{\infty}(U_s)$. For the same reason, $\hat{\Phi}_k-\hat{\phi}_{kt}\to 0$ in $C_c^{\infty}(U_t)$. But we know  $\hat{\phi}_{ks}\to id$ in $C_c^{\infty}(U_s)$ and $\hat{\phi}_{kt}\to id$ in $C_c^{\infty}(U_t)$, so if we define
   $$
   \Phi_k=
   \begin{cases}
   f_{ks}\circ \hat{\phi}_{ks}\ \mathrm{on}\  U_s,\\
   f_{kt}\circ \hat{\Phi}_k\ \mathrm{on}\  U_t\backslash U_s,\\
   \end{cases}
   $$
   then $\Phi_k$ is well defined and for any compact subset $K\subset\subset U_s\cup U_t$, $\Phi_k:K\to \Phi_k(K)\subset U_{ks}\cup U_{kt}$ is a diffeomorphism for $k$ large enough.

   The above argument show that we can glue two sequences of diffeomorphisms together if they are arbitrary close for $k$ large enough. So, by induction, for the finite sequences of local coordinates $\{\phi_{ks}\}_{s=1}^l:U_s\to U_{ks}$, they are all close to the identity(hence to each other) for $k$ large enough. When denoting $\Omega_l=\cup_{s=1}^l U_s$ and $\Omega_{kl}=\cup_{s=1}^l U_{ks}$, we can glue them together to be a local diffeomorphism $\Phi_{kl}:\Omega_l\to \Omega_{kl}$ such that $\Phi_{kl}^{-1}\circ \phi_{ks}$ converges smoothly to the identity on its domain.
   \end{proof}
   By this lemma, we have ${(\Phi_{kl})}^*g_k\xrightarrow[]{L^p_{loc}}g$ since $(\phi_{ks})^*g_k\xrightarrow[]{L^p}g$ by the construction of $g$ and $\Phi_{kl}$ is arbitrary close to $\phi_{ks}$ in $U_s$ for $k$ large enough. That is,
   $$(\Sigma_k,g_k,p_k)\to(\Sigma,g,p)\sim (X,d,p)\ \ \mathrm{in\ pointed}-L^p\ \mathrm{topology}.$$
   In the case $\int_{\Sigma_k}|A_k|^2d\mu_{g_k}\to 0$, by gluing the local uniform convergence of $u_{ks}\to u_{s}$, we know $(\Sigma_k,g_k,p_k)$ converges to $(\Sigma,g,p)$ in $C^0_{loc}$ topology.
  \end{proof}
  \subsection{Extrinsic convergence}
  Assume $\{F_k:(\Sigma_k,g_k,p_k)\to\mathbb{R}^n\}^{+\infty}_{k=1}$ converges to $(\Sigma,g,p)$ in intrinsic pointed-$L^p$-topology. Then there exist $\Phi_k:(\Sigma,p)\to (\Sigma_k,p_k)$ such that $\Phi_k^*g_k\to g$ in $L^p_{loc}$. Moreover, one could use $X_k:=F_k\circ \Phi_k$(they have common domain $\Sigma$) to represent the immersion $F_k$ and ask whether these $X_k$ converge in $W^{k,p}(\Sigma, \mathbb{R}^n)$. In this sense, we have the following extrinsic convergence theorem.
  \begin{thm}
  Assume $\{F_k:(\Sigma_k,g_k,p_k)\to\mathbb{R}^n\}^{+\infty}_{k=1}\subset \mathcal{E}(n,\varepsilon,A_0,i_0,V)$ is the sequence converges to $(\Sigma,g,p)$ in pointed-$L^p$-topology as in theorem~\ref{intrinsic convergence} with $F_k(p_k)\equiv 0$. Then there exists a isometric immersion $F:\Sigma\to \mathbb{R}^n$ such that $F_k$ converges to $F$ weakly in $W^{2,2}_{loc}(\Sigma,\mathbb{R}^n)$ and strongly in $W^{1,p}_{loc}(\Sigma,\mathbb{R}^n)$.
  \end{thm}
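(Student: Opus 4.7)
The plan is to combine the isothermal-chart structure used in Theorem~\ref{intrinsic convergence} with the conformal mean-curvature equation in order to extract local weak $W^{2,2}$ limits, glue them via the convergence of the complex structure, and transport to the global setting using Lemma~\ref{gluing lemma}.

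First, for each isothermal chart $f_{ks}:D_1\to U_{ks}\subset\Sigma_k$ converging to $f_s:D_1\to U_s\subset\Sigma$ from the proof of Theorem~\ref{intrinsic convergence}, set $F_{ks}:=F_k\circ f_{ks}$. This is a conformal immersion satisfying $|\nabla F_{ks}|^2=2e^{2u_{ks}}$ and the standard relation $\Delta F_{ks}=2e^{2u_{ks}}(H_k\circ f_{ks})$. Lemma~\ref{lemma3} gives a uniform $L^\infty(D_r)$ bound on $u_{ks}$, hence on $\nabla F_{ks}$; combined with $|H_k|^2\le 2|A_k|^2$ and the total curvature bound, it also yields
\[
\|\Delta F_{ks}\|_{L^2(D_r)}^2\le Ce^{2\|u_{ks}\|_{L^\infty(D_r)}}\int_{f_{ks}(D_r)}|A_k|^2d\mu_{g_k}\le C(n,\varepsilon,A_0,i_0,V,r).
\]
Normalizing so that $F_{k1}(0)=F_k(p_k)=0$ and propagating chart-to-chart by the gradient bound produces uniform $W^{2,2}(D_r,\mathbb{R}^n)$ estimates in every chart.

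By a diagonal subsequence, for every $s$ and every $r<1$ one has $F_{ks}\rightharpoonup F_s$ weakly in $W^{2,2}(D_r)$ and, by Rellich--Kondrachov, $F_{ks}\to F_s$ strongly in $W^{1,p}(D_r)$ for every $p\in(1,\infty)$. On chart overlaps $F_{kt}=F_{ks}\circ(f_{ks}^{-1}\circ f_{kt})$, and Step~2 of the proof of Theorem~\ref{intrinsic convergence} gives $f_{ks}^{-1}\circ f_{kt}\to f_s^{-1}\circ f_t$ in $C^\infty_{loc}$; the strong $W^{1,p}$-convergence of the $F_{ks}$ lets one pass to the limit in the composition to obtain $F_t=F_s\circ(f_s^{-1}\circ f_t)$. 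This defines a well-posed map $F:\Sigma\to\mathbb{R}^n$ with $F\circ f_s=F_s$. Applying Lemma~\ref{gluing lemma} to produce embeddings $\Phi_k:\Sigma\to\Sigma_k$ that are $C^\infty$-close on compact sets to each local map $f_{ks}\circ f_s^{-1}$, the local convergences combine to give $F_k\circ\Phi_k\to F$ weakly in $W^{2,2}_{loc}(\Sigma,\mathbb{R}^n)$ and strongly in $W^{1,p}_{loc}(\Sigma,\mathbb{R}^n)$.

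The isometric-immersion property is read off from the strong $W^{1,p}$-convergence: in each chart
\[
dF_s\otimes dF_s=\lim_{k\to\infty}dF_{ks}\otimes dF_{ks}=\lim_{k\to\infty}f_{ks}^*g_k=e^{2u_s}g_0=f_s^*g,
\]
with convergence in $L^{p/2}_{loc}$. Together with the local lower bound on $u_s$ from Step~3 of the proof of Theorem~\ref{intrinsic convergence} (so that $e^{2u_s}$ stays bounded below by a positive constant on compact sets), this shows that $dF$ is pointwise of rank two and $F^*g_{\mathbb{R}^n}=g$, so $F$ is a (weak) isometric immersion. The main technical point is the passage to the limit in the overlap relation $F_{kt}=F_{ks}\circ(f_{ks}^{-1}\circ f_{kt})$: weak $W^{2,2}$-convergence alone cannot be composed with a smoothly varying map, and it is precisely the Rellich improvement to strong $W^{1,p}$-convergence that makes the intrinsic and extrinsic gluings compatible.
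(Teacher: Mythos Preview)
Your proposal is correct and follows essentially the same route as the paper: both arguments use the conformal mean-curvature equation $\Delta(F_k\circ f_{ks})=e^{2u_{ks}}\vec H_k$ together with the $L^\infty$-bound on $u_{ks}$ from Lemma~\ref{lemma3} to get uniform local $W^{2,2}$ bounds, extract weak $W^{2,2}$/strong $W^{1,p}$ limits, glue using the $C^\omega$-convergence of the transition maps and Lemma~\ref{gluing lemma}, and read off the isometric property from the strong $W^{1,p}$ convergence of the first derivatives. The only organizational difference is that the paper propagates the $C^0$ bound on $F_{ks}$ by an explicit induction over the exhaustion $\Omega_l\subset\Omega_{l+1}$ (using $C^\beta$ convergence to control $|F_k(p_s)|$ at each new chart center), whereas you phrase this as ``propagating chart-to-chart by the gradient bound''; both amount to the same argument.
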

  \begin{proof}\

  Step 1.(local convergence)
  Take notations as in the proof of the intrinsic convergence: $i_0=$ the common isothermal radius, $f_{ks}:D_1\to U_{ks}(p^k_{s})\subset \Sigma_k$ and $f_s:D_1\to U_s(p_s)\subset\Sigma$ local isothermal coordinates with metrics $f_{ks}^*g_k=e^{2u_{ks}}g_0$ and $f_s^*g=e^{2u_s}g_0$ respectively, $\phi_{ks}=f_{ks}\circ f_s^{-1}:U_s(p_s)\to U_{ks}(p^k_{s})$ diffeomorphism such that $\tilde{g}_k=\phi_{ks}^*g_k\xrightarrow[]{L^p}g$. Assume $F_k:(\Sigma_k,p_{k1})\to \mathbb{R}^n$ to be the immersion of $\Sigma_k$ into $\mathbb{R}^n$ such that $F_k(p_{k1})=0$ and regard $F_{ks}=F_k\circ \phi_{ks}:U_s(p_s)\to U_{ks}(p^k_{s})\to\mathbb{R}^n$ as immersion(not necessarily isometry) of $U_s(p_s)$ into $\mathbb{R}^n$. Then under the isothermal coordinate $f_s:D_1\to U_s(p_s)$, if we define $\tilde{F}_{ks}=F_{ks}\circ f_s:D_1\to\mathbb{R}^n$, then
  $$\triangle \tilde{F}_{ks}=e^{2u_{ks}}\triangle_{g_k}\tilde{F}_{ks}=e^{2u_{ks}}\vec{H}_k=:h_{ks},$$
  where we use the mean curvature equation in the last equation. Note that
  $$\int_{D_r}|h_{ks}|^2=\int_{D_r}e^{4u_{ks}}|\vec{H}_{ks}|^2\le e^{2C(n,\varepsilon, A_0,V,r)}\int_{\Sigma_k}|A_k|^2\le e^C A_0$$
  and
  $$|\tilde{F}_{k1}|(x)\le|\tilde{F}_{k1}(x)-\tilde{F}_{k1}(0)|\le|d\tilde{F}_{k1}||x|\le e^Cr.$$
  By local $L^2$-estimation for elliptic equation, we get
  $$\|\tilde{F}_{k1}\|_{W^{2,2}(D_{r'})}\le C(r,r')(\|\tilde{F}_{k1}\|_{L^2(D_r)}+\|h_{ks}\|_{L^2(D_r)})\le C(r)<\infty.$$
  Thus, there exists an $\tilde{F}_{\infty1}\in W^{2,2}_{loc}(D_1,\mathbb{R}^n)$ such that $\tilde{F}_{k1}$ converges to $\tilde{F}_{\infty1}$ weakly in $W^{2,2}_{loc}(D_1,\mathbb{R}^n)$ and strongly in $W^{1,p}_{loc}(D_1,\mathbb{R}^n)$. Hence $\tilde{F}_{k1}^*g_{\mathbb{R}^n}$ converges to $\tilde{F}_{\infty1}^*g_{\mathbb{R}^n}$ in $L^p_{loc}(D_1)$. But we know
  \begin{align*}
  \tilde{F}_{k1}^*g_{\mathbb{R}^n}=(F_k\circ\phi_{k1}\circ f_1)^*g_{\mathbb{R}^n}=f_{k1}^*g_k\to f_1^*g\ \mathrm{in}\ L^p_{loc}(D_1).
  \end{align*}
  So, $\tilde{F}_{\infty1}^*g_{\mathbb{R}^n}=f_1^*g$, i.e., $F_{\infty1}:=\tilde{F}_{\infty1}\circ f_1^{-1}:U_1(p_1)\to\mathbb{R}^n$ is a local $Riemannian$ immersion.

  Step 2. (extending the local limit to global) We argue by induction. Let $\Omega_1=U_1(p_1)$, $\Omega_{l+1}=\Omega_l\cup \cup_{p_s\in \Omega_l}U_s(p_s)$; $\Phi_{k1}=\phi_{k1}, \Phi_{kl}:\Omega_l\to\Omega_{kl}\subset\Sigma_k$ constructed in Lemma~\ref{gluing lemma}. Now, assume we have constructed isomorphism immersion $F_{\infty l}:\Omega_l\to\mathbb{R}^n$ such that $X_{kl}:=F_k\circ\Phi_{kl}$ converges to $F_{\infty l}$ weakly in $W^{2,2}_{loc}(\Omega_l,\mathbb{R}^n)$ and strongly in $W^{1,p}_{loc}(\Omega_l,\mathbb{R}^n)\cap C^{\beta}_c(\Omega_l,\mathbb{R}^n)$. Then for any $p_s\in \Omega_l$, $F_k(p_s)\to F_{\infty l}(p_s)$. Noticing $|\tilde{F}_{ks}|\le|\tilde{F}_{ks}(p_s)|+e^Cr\le C(r)$ in $D_r$ and repeating the progress in Step 1, we know $F_{ks}=F_k\circ \phi_{ks}(\approx F_k\circ \Phi_{k(l+1)})$ converges weakly in $W^{2,2}_{loc}(U_s(p_s),\mathbb{R}^n)$ and strongly in $W^{1,p}_{loc}(U_s,\mathbb{R}^n)\cap C^{\beta}_c(U_s(p_s),\mathbb{R}^n)$. Moreover, by construction of $\Phi_{kl}$, we have
  $$X_{kl}^{-1}\circ F_{ks}=\Phi_{kl}^{-1}\circ \phi_{ks}\to id\ \mathrm{in}\  C_c^{\infty}(\Omega_l\cap B_{i_0}(p_s)).$$
  Thus $F_{ks}$ and $X_{kl}$ converges to the same limit in their common domain, i.e., $X_{k(l+1)}:=F_k\circ\Phi_{k(l+1)}$ converges to some $F_{\infty l+1}$ weakly in $W^{2,2}_{loc}(\Omega_{l+1},\mathbb{R}^n)$ and strongly in $W^{1,p}_{loc}(\Omega_{l+1},\mathbb{R}^n)\cap C^{\beta}_c(\Omega_{l+1},\mathbb{R}^n)$, and $F_{\infty l+1}:\Omega_{l+1}\to\mathbb{R}^n$ is an isometric immersion which extends $F_{\infty l}$. The work is done by taking  a direct limit $F=\underrightarrow{\lim} F_{\infty l}$.
  \end{proof}

  \begin{cor}
  By Langer's weak lower semi-continuous theorem\cite{L}, we also know $$\int_{\Sigma}{|A|}^2\le \liminf_{k\to \infty}\int_{\Sigma_k}{|A_k|}^2\le A_0$$ under the above assumption.
  \end{cor}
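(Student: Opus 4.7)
The plan is to reduce the global statement to a local one in the isothermal charts built in the proof of Theorem~\ref{intrinsic convergence}, where the immersions are controlled in $W^{2,2}$, and then extract lower semicontinuity of $\int|A|^2\,d\mu$ from lower semicontinuity of an $L^2$-norm under weak convergence. Concretely, for each chart $f_s:D_1\to U_s$, set $\tilde F_k = F_k\circ\Phi_k\circ f_s$ and $\tilde F = F\circ f_s$, so $\tilde F_k\rightharpoonup\tilde F$ in $W^{2,2}_{loc}(D_1,\mathbb{R}^n)$, strongly in $W^{1,p}_{loc}$, and $f_s^*g_k=e^{2u_k}g_0\to e^{2u}g_0=f_s^*g$ with $u_k\to u$ a.e.\ and in every $L^q_{loc}$ under the uniform bounds of Lemma~\ref{L^infty}.

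The key local identity is that in conformal coordinates the components of the second fundamental form are
$$A_{ij}^{(k)}=\partial_i\partial_j\tilde F_k-\Gamma^l_{ij}(u_k)\,\partial_l\tilde F_k,\qquad |A_k|^2\,d\mu_{g_k}=e^{-2u_k}\sum_{i,j}|A_{ij}^{(k)}|^2\,dx\,dy,$$
where $\Gamma^l_{ij}(u_k)$ is linear in $\nabla u_k$. The first piece $\partial_i\partial_j\tilde F_k$ converges weakly in $L^2_{loc}$ to $\partial_i\partial_j\tilde F$ by hypothesis. For the Christoffel correction, Lemma~\ref{L^infty} gives $\nabla u_k\rightharpoonup\nabla u$ weakly in $L^2_{loc}$ (the limit is identified via the strong convergence $u_k\to u$), while the isothermal property $|\nabla\tilde F_k|=\sqrt2\,e^{u_k}$ together with the uniform $L^\infty$ bound on $u_k$ makes $\nabla\tilde F_k$ uniformly bounded on compact sets and (using strong $W^{1,p}$-convergence plus dominated convergence) strongly convergent in every $L^q_{loc}$. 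A weak-times-strong product argument then yields $\Gamma^l_{ij}(u_k)\partial_l\tilde F_k\rightharpoonup\Gamma^l_{ij}(u)\partial_l\tilde F$ weakly in $L^2_{loc}$, so $A_{ij}^{(k)}\rightharpoonup A_{ij}$ weakly in $L^2_{loc}$. Since $e^{-u_k}\to e^{-u}$ a.e.\ and boundedly, multiplication by $e^{-u_k}$ is a strong $L^q$-operation, and we conclude $e^{-u_k}A_{ij}^{(k)}\rightharpoonup e^{-u}A_{ij}$ weakly in $L^2_{loc}$.

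The weak lower semicontinuity of the $L^2$-norm then gives, for any $\Omega\subset\subset D_1$,
$$\int_{\Omega}e^{-2u}\sum_{i,j}|A_{ij}|^2\,dx\,dy\le\liminf_{k\to\infty}\int_{\Omega}e^{-2u_k}\sum_{i,j}|A_{ij}^{(k)}|^2\,dx\,dy,$$
i.e.\ $\int_{f_s(\Omega)}|A|^2\,d\mu_g\le\liminf_k\int_{\Phi_k(f_s(\Omega))}|A_k|^2\,d\mu_{g_k}$. To globalize, exhaust $\Sigma$ by relatively compact $K_l=\overline{\Omega_l}$ each covered by finitely many charts $f_{s_1},\dots,f_{s_{N(l)}}$, decompose $K_l$ into disjoint Borel pieces inside single charts, sum the local estimates, and let $l\to\infty$ via Fatou together with the uniform bound $\int_{\Sigma_k}|A_k|^2\le A_0$ to obtain $\int_\Sigma|A|^2\le\liminf_k\int_{\Sigma_k}|A_k|^2\le A_0$. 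The main obstacle is the passage to the limit in the Christoffel correction, since it is a product of a merely weakly-$L^2$ convergent factor $\nabla u_k$ with a derivative of $\tilde F_k$; this is handled only because the conformal parametrization upgrades $\nabla\tilde F_k$ to a uniformly $L^\infty_{loc}$ (and thus strongly $L^q_{loc}$) convergent sequence.
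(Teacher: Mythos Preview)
Your argument is correct and offers a genuine alternative to the paper's proof. The paper follows the classical Langer--Morrey route: it observes that, at fixed $(F,DF)$, the density $J=|A|^2_g\,d\mu_g$ is a nonnegative quadratic form (hence convex) in $D^2F$, then uses Lusin and Egorov to extract a large compact set on which $(F,DF,D^2F)$ are continuous and $(F_k,DF_k)\to(F,DF)$ uniformly, and finally appeals to the tangent inequality $J(\eta_k)\ge J(\eta)+D_\eta J\cdot(\eta_k-\eta)$ together with weak $L^2$ convergence of $D^2F_k$. Your approach instead exploits the conformal parametrization special to this setting, writing $|A|^2\,d\mu=e^{-2u}\sum_{i,j}|A_{ij}|^2\,dx\,dy$ and reducing everything to weak $L^2$ lower semicontinuity of $e^{-u}A_{ij}$; the gain is that no Lusin--Egorov extraction is needed and the argument is entirely elementary, while the paper's argument is coordinate-free and applies to a broader class of integrands. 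One small correction: for the identity $|\nabla\tilde F_k|=\sqrt2\,e^{u_k}$ and the Christoffel formula to hold exactly you should take the genuinely conformal local maps $\tilde F_k=F_k\circ f_{ks}$ (as in Step~1 of the extrinsic convergence proof) rather than $F_k\circ\Phi_k\circ f_s$, since the glued diffeomorphism $\Phi_k$ is only $C^\infty$-close to $\phi_{ks}=f_{ks}\circ f_s^{-1}$ and does not yield an exactly conformal pullback; with that adjustment your weak-times-strong product computations go through as written.
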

  \begin{proof}
  The result can be found in \cite{L}, \cite{Mo}, we write the proof here for the convenience of the reader.
  The key observation is, as a function of $(\xi,\zeta,\eta)=(F,DF,D^2F)$, $J(\xi,\zeta,\eta):=|A_g|_g^2d\mu_g$ depends on $\eta=D^2F$ convexly for each fixed $\xi$ and $\zeta$, since it is an nonnegative quadratic form of $D^2F$ in any fixed coordinate.  For any compact domain $D\subset\subset \Sigma$, we suppose $\int_{D}|A|^2_gd\mu_g<\infty$. Since $F_k$ converges to $F$ weakly in $W^{2,2}_{loc}(\Sigma,\mathbb{R}^n)$ and strongly in $W^{1,p}_{loc}(\Sigma,\mathbb{R}^n)$, by Lusin's theorem, Egorov's theorem and the absolute continuity of integral, there exists a compact set $S\subset\subset D$ such that $F, DF, D^2F$ are continuous on $S$, $(F_k,DF_k)$ converges to $(F,DF)$ uniformly on $S$ and
  $$\int_{S}|A|^2_gd\mu_g\ge\int_{D}|A|^2_gd\mu_g-\varepsilon.$$
  In the case $\int_{D}|A|^2_gd\mu_g=\infty$, we can take $\int_{S}|A|^2_gd\mu_g\ge M$ for any $M>0$.  By  the convexity of $J$ we know
  \begin{align*}
  J(F_k,DF_k, D^2F_k)\ge J(F_k,DF_k,D^2F)&+D_{\eta}J(F,DF,D^2F)\cdot(D^2F_k-D^2F)\\
  &+[D_{\eta}J(F_k,DF_k,D^2F)-D_{\eta}J(F,DF,D^2F)]\cdot(D^2F_k-D^2F).
  \end{align*}
  Since $F,DF,D^2F\in C(S)$ and $dF\otimes dF$ is a metric, we know $D_{\eta}J(F,DF,D^2F)\in C(S)$. By the weak $L^2$ convergence of $D^2F_k$ to $D^2F$, we get
  \begin{align*}
  \int_{S}D_{\eta}J(F,DF,D^2F)\cdot(D^2F_k-D^2F)\to0.
  \end{align*}
  Moreover, the uniform convergence of $(F_k,DF_k)$  to $(F,DF)$ on $S$ and uniform boundness of $D^2F_k$ and $D^2F$ in $L^2$  imply that $D_{\eta}J(F_k,DF_k,D^2F)$ converges to $D_{\eta}J(F,DF,D^2F)$ uniformly on $S$ and
  \begin{align*}
  \int_{S}[D_{\eta}J(F_k,DF_k,D^2F)-D_{\eta}J(F,DF,D^2F)]\cdot(D^2F_k-D^2F)\to 0.
  \end{align*}
  As a result, we get
  \begin{align*}
  \liminf_{k\to \infty}\int_SJ(F_k,DF_k, D^2F_k)&\ge \liminf_{k\to \infty}\int_SJ(F_k,DF_k, D^2F)\\
  &=\int_SJ(F,DF, D^2F)\\
  &\ge \int_DJ(F,DF, D^2F)-\varepsilon.
  \end{align*}
  Letting $\varepsilon\to 0$ and $D\to \Sigma$, we get
  $$\int_{\Sigma}{|A|}^2\le \liminf_{k\to \infty}\int_{\Sigma_k}{|A_k|}^2.$$
  \end{proof}
  \subsection{Proof of Theorem \ref{convergence}}
  In this subsection, we will add the above two subsections to prove Theorem \ref{convergence}. The key work we need to do in the local case is to use the bounded volume condition to control the limit mapping such that it is proper.
  \begin{proof}
  Recall we use $\Sigma_k^{r}(p_k)$ to denote the connected component of $F_k^{-1}(B_r(0))\cap \Sigma_k$ containing $p_k$ and use $B_r^{\Sigma_k}(p_k)$ to denote the open geodesic ball in $\Sigma_k$ centered at $p_k$ with radius $r$. Then for each $r\in (0,R]$, $ B_r^{\Sigma_k}(p_k)\subset\Sigma_k^r(p_k)$. Since $i_{g_k}(x)\ge i_0(R-|F_k(x)|)$, by the same argument as in the global case(the above two subsections), we know there exists a (non-complete) pointed Riemannian surface $(\Sigma_r,g,d,p)$ with continuous metric $g$ such that $d_g\sim d$ and an isometric immersion $F_r:(\Sigma_r,p)\to (\mathbb{R}^n,0)$ such that,  after passing to a subsequence,
   \begin{enumerate}[$1)$]
  \item   $(B_{r}^{\Sigma_k}(p_k), g_k,p_k)$ converges  to $(\Sigma_r,g,p)$ in pointed $L^p$ topology, i.e., there exist smooth embedings $\Phi_{kr}:\Sigma_r\to B_{r}^{\Sigma_k}(p_k)$, s.t. $\Phi_{kr}^{*}g_k\to g$ in $L^p_{loc}(d\mu_g)$.
  \item The complex structure $\mathcal{O}_{kr}$ of $B_{r}^{\Sigma_k}(p_k)$ converges to the complex structure $\mathcal{O}$ of $\Sigma_r$.
  \item  $F_k\circ \Phi_{kr}$ converges to $F_r$ weakly in $W^{2,2}_{loc}(\Sigma_r,\mathbb{R}^n)$ and strongly in $W^{1,p}_{loc}(\Sigma_r,\mathbb{R}^n)$.
  \end{enumerate}

  But we do not know whether $F_R$ is proper. We even do not know whether  the `boundary' of $F_R(\Sigma_R)$(which we will define latter) will touch $\partial B_R(0)$. So we will next extend the local limit $X_R:=(F_R,\Sigma_R,g,p)$ to some maximal $X=(F,\Sigma,g,p)$ such that the `boundary' of $F(\Sigma)$  touches $\partial B_R(0)$  and then argue the extended immersing map $F$ is proper in $B^R(0)$. For simplification, we call the topology defined by the convergence in the last three items $1)\ 2)\ 3)$ by $\tau$-topology. For example, we will say the quadruple $X_{kr}=(F_k,B_{r}^{\Sigma_k}(p_k), g_k,p_k)$ converges to $X_r=(F_r,\Sigma_r,g,p)$ in $\tau$-topology and so on. For $\forall r\in(0,R)$, we define a partial order on the set
  \begin{align*}
  \mathcal{C}_r=\{X^A&=(F^A,\Sigma^A,g,p)| \exists A\subset \mathbb{N}_+\text{ and connected domains } U^A_{\alpha}\subset \Sigma_{\alpha}^r(p_{\alpha}),\forall  \alpha\in A\text{ s.t.}\\
                     &U^A_{\alpha}\supset B_r^{\Sigma_{\alpha}}(p_{\alpha}) \text{ and } X_{\alpha}=(F_{\alpha},U^A_{\alpha},g_{\alpha},p_{\alpha})\text{ converges to }X^{A}\text{ in } \tau\text{-topology } \}
 \end{align*}
 by saying $X^{A}\le X^B$ if $A\cap B$ is infinite and $U^A_{\gamma}\subset U^B_{\gamma}$ for infinite $\gamma\in A\cap B$. By diagonal argument, it is not hard to show that $(\mathcal{C}_r,\le)$ is a partial order set such that every chain in $(\mathcal{C}_r,\le)$ has an upper bound. Thus by Zorn's lemma, there exists a maximal $X^{r}=(F^{r},\Sigma^{r},g,p)=$ some $X^{A_{r}}$ in $(\mathcal{C}_r,\le)$. We are going to show $F^{r}:\Sigma^{r}\to B_r(0)$ is proper for each $r\in (0,R)$.

 First of all, we define $\bar{\Sigma}^r$ to be the completion of $(\Sigma^r,d_g)$ as a metric space and call $\partial \Sigma^r:=\bar{\Sigma}^r\backslash \Sigma^r$ the boundary of $\Sigma^r$. Since the area of a surface is continuous in the $\tau$-topology, we know $vol_g(\Sigma^r)\le V<+\infty$. Hence $\Sigma^r$ is not complete and $\partial\Sigma^r\neq \emptyset$.

 Otherwise, $\Sigma^r$ will be a closed surface or contains a ray. In the first case, $U_{\alpha}$ is a closed surface for $\alpha\in A_r$ large enough. This means $\Sigma^{R}_{\alpha}(p_{\alpha})$ contains a closed surface $U_{\alpha}$ and must equals to the closed surface  itself since it is connected. But then $\Sigma^{R}_{\alpha}(p_{\alpha})=U_{\alpha}\subset B_r(0)\subset\subset B_R(0)$, which contradicts to the fact $F_{\alpha}:\Sigma_{\alpha}\to B_R(0)$ is a proper immersion of an open surface. In the later case, we can choose infinite many points $\{x_i\}$ on the ray such that $B^{\Sigma^r}_{i_0(R-r)}(x_i)\cap B^{\Sigma^r}_{i_0(R-r)}(x_j)=\emptyset$ for $\forall i\neq j$. But by the definition of the isothermal radius, there exist open neighborhoods $U(x_i)\supset B^{\Sigma}_{i_0(R-r)}(x_i)$ and isothermal coordinates $\varphi_i:D_1(0)\to U(x_i) $ such that $\varphi_i(0)=x_i$ and $\varphi_i^*g=e^{2u}(dx^2+dy^2)$. Moreover, we know $|u|(z)\le C(n,\varepsilon,V,i_0(R-r))$ for $|z|\le \frac{1}{2}$. Thus for $|z|\le r_0:=e^{-C(n,\varepsilon, V,i_0(R-r))}\frac{i_0(R-r)}{2}$,
 \begin{align*}
 d_g(z,0)\le \int_0^1e^{u(tz)}dt\le\frac{i_0(R-r)}{2}.
 \end{align*}
This means $B^{\Sigma^r}_{i_0(R-r)}(x_i)\supset \varphi_i(D_{r_0}(0))$ and
\begin{align*}
vol_g(B^{\Sigma^r}_{i_0(R-r)}(x_i))\ge \int_{D_{r_1}(0)}e^{2u}dxdy\ge \frac{\pi i^2_0(R-r)}{4}e^{-4C(n,\varepsilon, V,i_0(R-r))}.
\end{align*}
So we get
$$V\ge vol_g(\Sigma^r)\ge \sum_{i=1}^{\infty}vol_g(B^{\Sigma^r}_{i_0(R-r)}(x_i))=+\infty,$$
again a contradiction!

 Then we claim $\bar{\Sigma}^r_{\delta}:=\{x\in \Sigma^r|d_g(x,\partial \Sigma^r)\ge\delta\}$ is compact in $\Sigma^r$. Here we also need to use the area bound  essentially. In fact, if we choose a maximal disjoint family of closed balls $\{\bar{B}^g_{\frac{\delta}{3}}(x_l)\}_{\l\in I}$ s.t. $x_l\in \bar{\Sigma}^r_{\delta}$, then $\bar{\Sigma}^r_{\delta}\subset \cup_{l\in I}\bar{B}^g_{\frac{2\delta}{3}}(x_l)$. So if the number of the balls is finite, then $\bar{\Sigma}^r_{\delta}$ is compact. Since $X_{\alpha}^{A_r}=(F_{\alpha},U_{\alpha}^{A_r}),g_{\alpha},p_{\alpha})$ converges to $X^r=(F^r,\Sigma^r,g,p)$ in $\tau$-topology, there exists $\Phi_{\alpha}^{A_r}:\Sigma^r\to U_{\alpha}^{A_r}$, s.t. $\Phi_{\alpha}^{A_r*}g_{\alpha}\to g$ in $L^p_{loc}$-topology and $x_{\alpha l}=\Phi_{\alpha}^{A_r}(x_l)\to x_l$ in pointed Gromov-Haustorff topology. Since $d_g\le Cd$, we know $\Phi_{\alpha}^{A_r}(\bar{B}^g_{\frac{\delta}{3}}(x_l))\supset \bar{B}^{g_{\alpha}}_{\frac{\delta}{6C}}(x_{\alpha l})$ for $\alpha$ large enough. And by the same argument as in the above paragraph, we get
\begin{align*}
vol_g(\bar{B}^g_{\frac{\delta}{3}}(x_l))&\ge \lim_{\alpha\to \infty}vol_{g_k}(\bar{B}^{g_{\alpha}}_{\frac{\delta}{6C}}(x_{\alpha l}))\\
                                        &\ge \lim_{\alpha\to\infty}\pi\min\{\big(\frac{\delta}{12C}\big)^2,\frac{i_0(R-r))}{2}^2\}e^{-4C(n,V,i_0(R-r),\varepsilon)}\\
                                        &=C'(n,V,i_0(R-r),\varepsilon, \delta),
\end{align*}
where we use lemma \ref{lemma3} in the second inequality. Since $\{\bar{B}^g_{\frac{\delta}{3}}(x_l)\}_{\l\in I}$ are disjoint, we know
$|I|\le \frac{V}{C'(n,V,i_0(R-r),\varepsilon, \delta)}<+\infty$ and $\bar{\Sigma}^r_{\delta}$ is compact.

If we denote $\Sigma^r_{\delta}=\{x\in \Sigma^r|d_g(x,\partial \Sigma^r)>\delta\}$, then the last paragraph guarantees $\{\Sigma^{r}_{\frac{1}{j}}\}_{j=1}^{+\infty}$ is a compact exhaustion of $\Sigma^r$. Hence for $F^r:\Sigma^r\to B_r(0)$ to be proper in $B_r(0)$, it is enough to show $F^r(\partial \Sigma^{r}_{\frac{1}{j}})\subset B^{c}_{r-\frac{3}{j}}(0)$ for infinite $j$.

 Now we show $F^r(\partial \Sigma^{r}_{\frac{1}{j}})\cap B_{r-\frac{3}{j}}(0)\neq \emptyset$ will cause a contradiction. In fact, if $\exists x_0\in \partial \Sigma^r_{\frac{1}{j}}$ s.t. $y_0:=F^r(x_0)\in B_{r-\frac{3}{j}}(0)$, then $|y_0|<r-\frac{3}{j}$ and $B_{\frac{3}{j}}(y_0)\subset B_{\frac{3}{j}+|y_0|}(0)\subset B_r(0)$. Put $q_{\alpha}=\Phi_{\alpha}^{A_r}(x_0)\in U_{\alpha}^{A_r}$ and $y_{\alpha}=F_{\alpha}(q_{\alpha})$. Then $y_{\alpha}=F_{\alpha}\circ \Phi_{\alpha}^{A_r}(x_0)\to F^{r}(x_0)=y_0$ as $\alpha\to \infty$. So for $\alpha$ large enough, $y_{\alpha}\in B_{\frac{1}{j}}(y_0)$ and $B_{\frac{2}{j}}(y_{\alpha})\subset B_{\frac{3}{j}}(y_0)\subset B_r(0)$. Let $V_{\alpha}$ be the component of $F_{\alpha}^{-1}(B_{\frac{2}{j}}(y_{\alpha}))\cap \Sigma_{\alpha}$ containing $q_{\alpha}$. Then $i_{g_{\alpha}}(x)\ge i_0(R-r), \forall x\in V_{\alpha}$. Thus by the same argument as in the global case,  after passing to a subsequence again, $\tilde{X}^{A_r}_{\alpha}=(F_{\alpha}, U^{A_r}_{\alpha}\cup V_{\alpha}, g_{\alpha},p_{\alpha})$ converges to some $\tilde{X}^{r}=(\tilde{F}^r,\tilde{\Sigma}^r,g,p)\in \mathcal{C}_r$ and $X^r\le \tilde{X}^r$. Since $B_{\frac{2}{j}}^{g_{\alpha}}(q_{\alpha})\subset V_{\alpha}\subset U^{A_r}_{\alpha}\cup V_{\alpha}\to \tilde{\Sigma}^r$ and $q_{\alpha}\to x_0$, we know $B^g_{\frac{3}{2j}}(x_0)\in \tilde{\Sigma}^r$. But $x_0\in \partial\Sigma^r_{\frac{1}{j}}$ implies $d_g(x_0,\partial \Sigma^r)\le \frac{1}{j}<\frac{3}{2j}\le d_g(x_0,\partial \tilde{\Sigma}^r)$, thus $\Sigma^r\subsetneq\tilde{\Sigma}^r$  and $X^r<\tilde{X}^r$. This contradicts to $\Sigma^r$ is maximal.

 At last, we know $F^r:\Sigma^r\to B_r(0)$ is proper for each $r\in (0,R)$. Take a direct limit by diagonal argument and we get a quadruple $X^R=(F^R,\Sigma^R,g,p)\in \mathcal{C}_R$ such that $F^R:\Sigma^R\to B_R(0)$ is proper.
    \end{proof}
\section{Application \Rmnum{1}-Leon Simon's decomposition theorem}\label{section:estimate the isothermal radius}
  \subsection{Estimation of the isothermal radius}
 We now give the estimation of the isothermal radius. The key is that the $L_{loc}^p$-convergence of the metric could be improved to $C^0_{loc}$-convergence in the blowup case (Corollary \ref{uniformly convergence}) and the isothermal radius is continuous in $C^0$ topology.
 \begin{proof}[Proof of Property \ref{isothermal radius}]
 We argue by contradiction. If not, there exists a $V>0$, a sequence of immersing Riemannian surfaces $F_k:(\Sigma_k, g_k, 0)\to \mathbb{R}^n$ and $x_k\in \Sigma_k\cap B_{\frac{1}{2}}(0)$ satisfying $\mu_{g_k}(\Sigma_k\cap B_1(0))\le V$ and $\int_{\Sigma_k \cap B_1(0)}{|A_k|}^2d\mu_{g_k}=\varepsilon_k\to 0$, but $\alpha_k=i_{g_k}(x_k)=\mathop{\mathrm{inf}}_{x\in \Sigma_k\cap B_{\frac{1}{2}}}i_{g_k}(x)\to 0$. Let $\tilde{F}_k:(\tilde{\Sigma}_k,h_k,0)=(\alpha_k^{-1}(\Sigma_k-x_k), \alpha_k^{-2}g_k,0)\to\mathbb{R}^n$. Then for any fixed $R>0$, we have
 \begin{enumerate}[(1)]
 \item for any fixed $\delta>0$,
  \begin{align*}
       \mu_{h_k}(\tilde{\Sigma}_k\cap B_R(0))&=(\alpha_k R)^{-2}\mu_{g_k}(\Sigma_k\cap B_{\alpha_k R}(x_k))R^2\\
          &\le((1+\delta)\mu_{g_k}(\Sigma_k\cap B_1(0))+(1+\delta^{-1})Will(\Sigma_k))R^2\\
          &\le((1+\delta)V+2(1+\delta^{-1})\int_{\Sigma_k\cap B_1(0)}{|A_k|}^2d\mu_{g_k})R^2\\
          &\le((1+\delta)V+2(1+\delta^{-1})\varepsilon_k)R^2\\
          &\le(1+2\delta)VR^2
       \end{align*}
 for $k$ large enough, where we use the monotonicity formulae Lemma~\ref{monotonicity formulae} in the first inequality;
 \item $\int_{\tilde{\Sigma}_k\cap B_R(0)}{|\tilde{A}_k|}^2d\mu_{h_k}=\int_{\Sigma_k\cap B_{\alpha_k R}(x_k)}{|A_k|}^2d\mu_{g_k}\le \varepsilon_k\le 4\pi\varepsilon$ for $k$ large enough;
 \item $i_{h_k}(\tilde{x})=\alpha_k^{-1}i_{g_k}(x)\ge 1$ for $\tilde{x}=\alpha_k^{-1}(x-x_k)\in \tilde{\Sigma}_k\cap B_{\frac{1}{2\alpha_k}}(0)$ but $i_{h_k}(0)=\alpha_k^{-1}i_{g_k}(x_k)=1$.
 \end{enumerate}
 That is, $(\tilde{F}_k:(\tilde{\Sigma}_k, h_k,0)\to \mathbb{R}^n)\in \mathcal{C}(n,\varepsilon,1,(1+2\delta)V, R)$ for $k$ large enough. By Theorem ~\ref{convergence}, we know there exists a pointed Riemannian surface $(\Sigma_{\infty},h_{\infty},0)$ and a proper immersion $F:(\Sigma_{\infty},h_{\infty},0)\to\mathbb{R}^n$ with continuous metric $g$ such that $(\tilde{\Sigma}_k,h_k,0)\to(\Sigma_{\infty},h_{\infty},0)$ in pointed $L^p$-topology and $\tilde{F}_k\to F$ in weak $W^{2,2}$ topology. Again, the lower semi-continuity property of total curvature implies
  $$\int_{\Sigma_{\infty}\cap B_R(0)}{|A_{\infty}|}^2d\mu_{h_{\infty}}\le\liminf_{k\to \infty}\int_{\tilde{\Sigma}_k\cap B_R(0)}{|\tilde{A}_k|}^2d\mu_{h_k}=0.$$
 Thus $\Sigma_{\infty}$ is a flat plane $\mathbb{R}^2$ with isothermal radius $i_{h_{\infty}}(\Sigma_{\infty})=\infty$ and we may assume $F(x,y)=(x,y,0^{n-2})$ for some constant $a>0$. In fact, $F(\Sigma_{\infty})$ is included in a plane $P\subset\mathbb{R}^n$ since $A_{\infty}=0$. Thus $F:(\Sigma_{\infty},0)\to (P,0)$ is a proper immersion of a surface to a plane with the same dimension. Since a proper local diffeomorphsim is a covering map and $P$ is simply connected, we know $F:(\Sigma_{\infty},0)\to (P,0)$ is a diffeomorphsim.  So we can assume $\Sigma_{\infty}=\mathbb{R}^2$ and $F:\mathbb{R}^2\to P\subset \mathbb{R}^n$ has the form $F(x,y)=(x,y, 0^{n-2})$.

 Next, we observe the continuity of isothermal radius in the special case to get a contradiction. By Theorem ~\ref{intrinsic convergence}$(d)$,  $(\tilde{\Sigma}_k,h_k,0)\to(\Sigma_{\infty},h_{\infty},0)$ in $C^0_{loc}$ topology.  So,there exist local diffeomorphisms $f_k: D_{1+2\sigma}\subset \mathbb{R}^2\to \Omega_k\supset B_{1+\sigma}^{\tilde{\Sigma}_k}(0)\subset \tilde{\Sigma}_k$ such that $f_k^*h_k\xrightarrow[]{C_c^0}h_{\infty}=(dx^2+dy^2)$ on $D_{1+\sigma}$. Regard $f_k$ as the coordinate of $\Omega_k$ and write $f_k^*g_k=E_k{dx}^2+2F_kdxdy+G_k{dy}^2$. Then we have $E_k, G_k\rightrightarrows 1$ and $F_k\rightrightarrows 0$ uniformly on $D_{1+\sigma}$. Hence the Beltrami coefficient
 $$\mu_{h_k}=\frac{\sqrt{(E_k-G_k)^2+4F_k^2}}{E_k+G_k+2\sqrt{E_kG_k-F_k^2}}\rightrightarrows 0$$
 on $D_{1+\sigma}$. So, by the existence of isothermal coordinate \cite{Chern 1}(see also \cite{LZ}), we know $i_{h_k}(0)\ge 1+\sigma$. A contradiction!
 \end{proof}
 \begin{rmk}
 If we assume $U(p)\subset 101 B_r^{\Sigma}(p)$(which is satisfied in the above case) in the definition of isothermal radius, then  by similar argument as in the last paragraph, one can prove the isothermal radius is continuous in pointed $C^0$ topology. This is the advantage of the isothermal radius to the injective radius.
 \end{rmk}

  \subsection{}
  We now give a blowup approach to Leon Simon's decomposition theorem \cite{LS}. The idea is first arguing by blowup to show the immersions are indeed embeddings, then using the Poincar\'e inequality to estimate the area out of the multi-graph, and finally using the area estimate to prove the multi-graph is in fact single.
  During this subsection, we use $\Psi(\varepsilon)$ to denote a function satisfying  $\lim_{\varepsilon\to 0}\Psi(\varepsilon)=0$ which may change from line to line. We also use the notation $U \sim B_r^{\Sigma}(p)$ to mean $B^{\Sigma}_{(1-\Psi(\varepsilon))r}(p)\subset U\subset B^{\Sigma}_{(1+\Psi(\varepsilon))r}(p)$.
  \begin{thm}\label{decomposition}
  For fixed $R>0$ and any $V>0$, there exists an $\varepsilon_2=\varepsilon_2(n,V)>0$ such that for $\forall \varepsilon\in (0,\varepsilon_2]$ the following holds:

  If  $F:(\Sigma,g,p)\to (\mathbb{R}^n,0)$ is a Riemannian immersion proper in $B_R(0)$ with
  $$\mathcal{H}^2(\Sigma_k\cap B_R(0))\le VR^2,\ \int_{\Sigma\cap B_R(0)}{|A|}^2d\mu_g\le \varepsilon^2\  and \ F(\partial\Sigma)\subset B_R^c(0),$$
then there exists an topologically disk $U_0(p)\sim B_{\frac{R}{2}}^{\Sigma}(p)$ with $p\in \Sigma^{\frac{R}{2}}(0)\subset U_0(p)$ such that $F: U_0(p)\to \mathbb{R}^n$ is an embedding with $F(\partial U_0)\subset B^c_{\frac{R}{2}}(0)$.

   Moreover, there exist finite many disjoint topologically disks ${D_i}$ contained in $U_0$, such that
   $$\Sigma_i\mathcal{H}^2(D_i)\le \Psi(\varepsilon),\ \Sigma_i \mathcal{H}^1(\partial D_i)\le \Psi(\varepsilon),$$
   and
    $$U_0(p)\backslash \cup_iD_i=\mathrm{graph}u=\{(x,u(x))|x\in \Omega\}$$
    for some function $u:\Omega\to L^{\bot}$ with $|\nabla u|\le \Psi(\varepsilon)$, where $L\subset \mathbb{R}^n$ is an affine 2-plane, and $\Omega\subset L$ is a bounded domain with the topology type of a disk with finite many disjoint small disks removed.
  \end{thm}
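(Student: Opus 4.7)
The plan is a blowup-by-contradiction that simultaneously delivers embeddedness, the Lipschitz graph, and the size bounds on $\bigcup D_i$. Assume the theorem fails for some $V>0$: then there is a sequence of proper immersions $F_k:(\Sigma_k,g_k,p_k)\to(\mathbb{R}^n,0)$ in $B_R(0)$ with $\mathcal{H}^2(\Sigma_k\cap B_R)\le VR^2$, $F_k(\partial\Sigma_k)\subset B_R^c(0)$ and $\int_{\Sigma_k\cap B_R}|A_k|^2\le\varepsilon_k^2\to 0$, for which no universal $\Psi$ with $\Psi(\varepsilon_k)\to 0$ yields the stated conclusions. Combining the monotonicity formula (Theorem~\ref{monotonicity formulae}) for a local area bound with Proposition~\ref{isothermal radius} applied at scale $R-|F_k(x)|$ provides an interior lower bound $i_{g_k}(x)\ge i_0(R-|F_k(x)|)$, so $\{F_k\}\subset\mathcal{C}(n,\varepsilon,i_0,V,R)$. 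Theorem~\ref{convergence} then furnishes a subsequence, smooth embeddings $\Phi_k:\Sigma_\infty\to\Sigma_k$, and a proper isometric immersion $F_\infty:(\Sigma_\infty,g_\infty,p_\infty)\to B_R(0)$ such that $\Phi_k^*g_k\to g_\infty$ in $L^p_{loc}$ and $F_k\circ\Phi_k\to F_\infty$ weakly in $W^{2,2}_{loc}$ and strongly in $W^{1,p}_{loc}$.

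The lower-semicontinuity corollary at the end of Section~\ref{section:convergence} forces $\int_{\Sigma_\infty}|A_\infty|^2=0$, so $F_\infty$ is an isometric immersion of a flat surface into a single affine $2$-plane $L\subset\mathbb{R}^n$; being a proper local diffeomorphism into the simply connected set $B_R(0)\cap L$, $F_\infty$ is a global embedding of $\Sigma_\infty$ onto $B_R(0)\cap L$. Because $\int|A_k|^2\to 0$, Theorem~\ref{intrinsic convergence}(d) improves the metric convergence to $C^0_{loc}$, and the Morrey embedding $W^{1,p}\hookrightarrow C^0$ ($p>2$) upgrades $F_k\circ\Phi_k\to F_\infty$ to $C^0_{loc}$. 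Define $U_0^k:=\Phi_k(\overline B^{\Sigma_\infty}_{R/2}(p_\infty))$. For $k$ large, $U_0^k\sim B_{R/2}^{\Sigma_k}(p_k)$ is a topological disk, $F_k|_{U_0^k}$ is an embedding (uniform local injectivity in isothermal charts comes from the two-sided bound on $u_k$ in Lemma~\ref{L^infty}, while global injectivity comes from Hausdorff-closeness to the embedded flat disk), and $F_k(\partial U_0^k)$ lies in an arbitrarily small neighborhood of $\partial B_{R/2}(0)\cap L$, hence inside $B_{R/2}^c(0)$.

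The graph decomposition is extracted from the Gauss map $G_k:U_0^k\to G_2(\mathbb{R}^n)$. Since $\int|DG_k|^2=\tfrac12\int|A_k|^2\le\tfrac12\varepsilon_k^2$ and the isothermal charts from Section~\ref{section:convergence} provide (via Lemma~\ref{L^infty}) a uniformly flat-equivalent metric, the Poincar\'e inequality yields $\|G_k-\bar G_k\|_{L^2}\le C\varepsilon_k$. With $E_k^\delta:=\{x\in U_0^k:|G_k(x)-\bar G_k|>\delta\}$, Chebyshev gives $\mathcal{H}^2(E_k^\delta)\le C\varepsilon_k^2/\delta^2$; and the coarea inequality
\[
\int_0^\infty\mathcal{H}^1\big(\{|G_k-\bar G_k|=t\}\big)\,dt\le \int_{U_0^k}|\nabla|G_k-\bar G_k||\le C\varepsilon_k
\]
lets me choose a level $\delta=\delta(\varepsilon_k)\to 0$ on a set of positive measure so that also $\mathcal{H}^1(\partial E_k^\delta)\le\Psi(\varepsilon_k)$. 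Take the $D_i$ to be the connected components of $E_k^\delta$ (capped off by any nearby small complementary pieces so each is a topological disk) and $L:=\bar G_\infty$. On $U_0^k\setminus\bigcup D_i$ the tangent plane differs from $L$ by at most $\delta$, so the projection $\pi_L$ represents this set as an a priori multi-valued Lipschitz graph with $|\nabla u|\le\Psi(\varepsilon_k)$.

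The main obstacle is single-valuedness, and this is precisely what the blowup delivers that Leon Simon's original decomposition does not. The multiplicity of $\pi_L\circ F_k$ is a locally constant positive integer on each connected component of $\Omega\setminus\pi_L(\bigcup D_i)$. For $k$ large, $F_k(U_0^k)$ sits in an arbitrarily small Hausdorff neighborhood of the embedded flat disk $B_{R/2}(0)\cap L$, over which $F_\infty$ is single-sheeted, so the generic multiplicity of $F_k$ at an interior point of $\Omega$ equals one for large $k$. Since $\pi_L(\bigcup D_i)$ has $\mathcal{H}^2$-measure and $\mathcal{H}^1$-boundary length both $\Psi(\varepsilon_k)$, it cannot separate $\Omega$ into components of distinct multiplicities, so $u$ is globally single-valued. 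This contradicts the assumed failure of the theorem. The delicate quantitative step is exactly the coarea selection of $\delta$, which must simultaneously control $\mathcal{H}^2(D_i)$ and $\mathcal{H}^1(\partial D_i)$ by the same $\Psi(\varepsilon)$.
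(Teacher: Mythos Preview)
Your overall strategy---blowup contradiction, convergence to a flat disk via Theorem~\ref{convergence} and lower semicontinuity, then Gauss-map level-set analysis via coarea and Poincar\'e---matches the paper's. But the embedding step has a genuine gap. You write that ``uniform local injectivity in isothermal charts comes from the two-sided bound on $u_k$ in Lemma~\ref{L^infty}.'' A two-sided bound on $u_k$ only says $c\le|dF_k|\le C$ pointwise; it controls the metric, not injectivity. An immersion with uniformly bounded conformal factor can still self-intersect on arbitrarily small scales (think of a tightly wound strip). Likewise, $C^0$- or Hausdorff-closeness of $F_k$ to the embedded flat disk $F_\infty$ only rules out double points $x_k\neq y_k$ with $|x_k-y_k|$ bounded away from zero; it says nothing when $|x_k-y_k|\to 0$. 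The paper handles precisely this residual case by a \emph{second} blowup: given $F_k(x_k)=F_k(y_k)$, one first uses the $C^{0,\beta}$ convergence to force $r_k:=|x_k-y_k|\to 0$, then rescales $f_k(x)=r_k^{-1}\big(F_k(x_k+r_k x)-F_k(x_k)\big)$. Because $\int_{D_{r_k}(x_k)}|D^2F_k|^2\to 0$ (this uses $\int|A_k|^2\to 0$, not just boundedness, together with the $L^\infty$ bound on $u_k$), $f_k$ converges to an affine map, while $f_k(0)=0$ and $f_k(e_k)\to 0$ force that affine map to be rank~$\le 1$, contradicting $df_k\otimes df_k\to g_\infty\ge\tfrac12\,\delta$. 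Your sketch is missing this argument, and the reason you give in its place is incorrect.

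Two smaller points. First, with $U_0^k:=\Phi_k(\overline B^{\Sigma_\infty}_{R/2}(p_\infty))$ the boundary $F_k(\partial U_0^k)$ lies in a small neighborhood of $\partial B_{R/2}(0)\cap L$, which contains points \emph{inside} $B_{R/2}(0)$; you cannot conclude $F_k(\partial U_0^k)\subset B_{R/2}^c(0)$ without taking a slightly larger radius (the paper uses $i_0=\tfrac{11}{20}>\tfrac12$). Second, for single-valuedness the paper does not use your degree/Hausdorff-closeness argument but a direct area comparison: after arranging that $\pi_L:C_k\to\Omega_k$ is a proper local diffeomorphism (this requires some care with boundaries that you elide), one has $\mathcal H^2(C_k)\le(1+\Psi)\pi i_0^2$ while $|\Omega_k|\ge(1-\Psi)\pi i_0^2$, so the covering degree is forced to be $1$. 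Your degree idea can be made to work, but it needs the same properness and a well-defined degree relative to $\partial U_0^k$; as written it is not complete.
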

  \begin{proof}
  Without loss of generality, we assume $R=1$ and argue by contradiction. So assume there are a sequence of Riemannian immersions $F_k:(\Sigma_k,g_k,p_k)\to(\mathbb{R}^n,0)$ proper in $B_1(0)$ and satisfying $\int_{\Sigma_k\cap B_1(0)}{|A_k|}^2d\mu_{g_k}=\varepsilon^2_k\to0$, $\mathcal{H}^2(\Sigma_k\cap B_1(0))\le V$ but the conclusion of the theorem does not hold. We will prove the conclusion does hold for $k$ large enough and get a contradiction.

   First of all, Property~\ref{isothermal radius} implies there exists $\alpha_0(V)>0$ such that $i_{g_k}(x)\ge \alpha_0(V)$, $\forall x\in \Sigma_k\cap B_{\frac{3}{4}}(0)$. Then by Theorem ~\ref{convergence}, there exists an immersing Riemannian surface $F_{\infty}:(\Sigma_{\infty},g_{\infty},p)\to\mathbb{R}^n$ which is properly immersed in $B_{\frac{2}{3}}(0)$ such that $(\Sigma_k,g_k,p_k)\to (\Sigma_{\infty},g_{\infty},p)$ in pointed $C^0$ topology, $F_k\to F_{\infty}$ in pointed $W^{1,p}$ and weak $W^{2,2}$ topology and
    $$\int_{\Sigma_{\infty}\cap B_{\frac{2}{3}}(0)}{|A|}^2d\mu_{g_{\infty}}\le\liminf_{k\to \infty}\int_{\Sigma_k\cap B_{\frac{2}{3}}(0)}{|A_k|}^2d\mu_{g_k}=0.$$
     Thus $F_{\infty}(\Sigma_{\infty})\cap B_{\frac{2}{3}}(0)\subset\mathbb{R}^n$ is a flat disk in some 2-dimensional linear subspace $P$. Since $F_{\infty}$ is a proper immersion in $B_{\frac{2}{3}}(0)\cap P$, by the simply connected property of $B_{\frac{2}{3}}(0)\cap P$, we may assume $F_{\infty}$ is an embedding with the form $F_{\infty}(x_1,x_2)=(x_1,x_2,0^{n-2})$  and $F_k:D_{\frac{2}{3}}(0)\to \mathbb{R}^n$ satisfies
   $$F_k^*g_{\mathbb{R}^n}\xrightarrow[]{C^0(D_{\frac{2}{3}})} dx_1^2+dx_2^2\ and\ \|F_k-F_{\infty}\|_{W^{1,p}(D_{\frac{2}{3}}(0))}+\|F_k-F_{\infty}\|_{C^{\beta}(\bar{D}_{\frac{3}{5}})(0)}\to0,$$
   for $\forall \beta\in(0,1)$. Moreover, we have $\bigcup_{r<\frac{2}{3}}F_k(D_r)\supset\Sigma_k^{\frac{2}{3}}(p_k)$ and may assume $\bigcup_{r<\frac{3}{5}}F_k(D_r)\supset \Sigma_k^{\frac{3}{5}}(p_k)$ by taking a subsequence. So for $k$ large enough, there exists $r_k<\frac{3}{5}$ s.t. $\Sigma_k^{\frac{1}{2}}(p_k)\subset F_k(D_{r_k})\subset F_k(\bar{D}_{\frac{3}{5}})$.

    Step 1. $F_k:D_{\frac{3}{5}}\to \mathbb{R}^n$ is embedding for $k$ large.

    To prove this, we also argue by contradiction(similar to \cite[Lemma 3.6]{Jingyi Chern Yuxiang Li}, see also \cite[Lemma 2.1.3]{MWH}). If it is not this case, then there exist two sequences of points $\{x_k\neq y_k\}\subset D_{\frac{3}{5}}$ such that $F_k(x_k)=F_k(y_k)$. By $\|F_k-F_{\infty}\|_{C^{\beta}(\bar{D}_{\frac{3}{5}})(0)}=:c_k\to0$, we have $$|x_k-y_k|=|(F_k(x_k)-F_{\infty}(x_k))-(F_k(y_k)-F_{\infty}(y_k))|\le c_k|x_k-y_k|^{\beta},$$
     and hence $|x_k-y_k|^{1-\beta}\le c_k\to 0$. Intuitively, this means the sequence $F_k$ will blowup to a map degenerating on the limit direction of $x_k-y_k$, which contradicts to the immersion assumption. More precisely, we may assume $x_k,y_k\to x_0\in \bar{D}_{\frac{3}{5}}$. Let $y_k=x_k+r_ke_k$, where $r_k=|x_k-y_k|$ and $|e_k|=1$. Assume $e_k\to e=(0,1)\in S^1$ and define
  $$f_k(x)=\frac{F_k(x_k+r_kx)-F_k(x_k)}{r_k}.$$
  Since $\int_{\Sigma_k} |A_k|^2\to 0$, so by the same argument as in the proof of Proposition ~\ref{isothermal radius}, we could assume $F_k^*g_{\mathbb{R}^n}=E_kdx_1^2+2T_kdx_1dx_2+G_kdx_2^2$ with $E_k,G_k\rightrightarrows 1$, and $T_k\rightrightarrows0$ uniformly in $D_{\frac{2}{3}}$. Thus $\|dF_k\|_{L^{\infty}(D_{\frac{2}{3}})}\le 3$.
  So
  \begin{align*}|f_k(e)|=\frac{|F_k(x_k+r_ke)-F_k(x_k)|}{r_k}&=\frac{|F_k(x_k+r_ke)-F_k(x_k+r_ke_k)|}{r_k}\\
                                                             &\le\|d F_k\|_{L^{\infty}(D_{\frac{2}{3}})}|e_k-e|\to 0.
  \end{align*}
  Furthermore, we have $f_k(0)=0$,
  $$\|df_k\|_{L^{\infty}(D_1)}=\|dF_k\|_{L^{\infty}(D_{r_k}(x_k))}\le\|dF_k\|_{L^{\infty}(D_{\frac{1}{15}}(x_0))}\le 3$$
  and
  $$\int_{D_1}|d^2f_k|^2=\int_{D_{r_k}(x_k)}|d^2F_k|^2(y)dy\to 0,$$
  which imply $f_k\to f(x_1,x_2)=ax_1+bx_2+c$ in $W^{2,2}(D_1)\cap W^{1,p}(D_1)\cap C_c^{\beta}(D_1)$. So, $f(0)=0$ and $f(e)=0$ imply $b=c=0$ and $f(x_1,x_2)=ax_1$. But on the other hand, for $a.e.x$,
  $$df\otimes df(x)=\lim_{k\to \infty}df_k\otimes df_k(x)=\lim_{k\to \infty}dF_k\otimes dF_k(x_k+r_kx)\ge \frac{1}{2}(dx_1^2+dx_2^2),$$
 i.e., $b\neq 0$. A contradiction!

Step 2. Area estimate out of the graph.

W.O.L.G., we can assume $F_k:D_{\frac{3}{5}}(0)\to \mathbb{R}^n$ is a conformal embedding for $k$ large, i.e., $F_k^*g_{\mathbb{R}^n}=e^{2u_k}(dx^1\otimes dx^1+dx^2\otimes dx^2)$. In fact, since
$F_k^*g_{\mathbb{R}^n}=E_kdx_1^2+2T_kdx_1dx_2+G_kdx_2^2$ with $E_k,G_k\rightrightarrows 1$, and $T_k\rightrightarrows0$ uniformly in $D_{\frac{2}{3}}$, by the existence of isothermal  coordinates, there exists a $\varphi_k:D_{\frac{2}{3}}(0)\to D_{\frac{2}{3}}(0)$ such that $\varphi_k(0)=0$ and
$$\varphi_k^*F_k^*g_{\mathbb{R}^n}=e^{2u_k}(dx^1\otimes dx^1+dx^2\otimes dx^2).$$
If we define $\tilde{F}_k=F_k\circ \varphi_k$ and $\tilde{g}_k=e^{2u_k}(dx^1\otimes dx^1+dx^2\otimes dx^2)$, then $\tilde{F}_k:D_{\frac{2}{3}}(0)\to \mathbb{R}^n$ is a conformal embedding with $vol_{\tilde{g}_k}(D_{\frac{2}{3}}(0))=vol_{g_k}(F_k(D_{\frac{2}{3}}(0))$ and $\int_{D_{\frac{2}{3}}(0)} |\tilde{A}_k|^2_{\tilde{g}_k}=\int_{F_k(D_{\frac{2}{3}}(0))}|A_k|^2_{g}$. So $F_k$ could be replaced by $\tilde{F}_k$ and all the above arguments hold.

Take $i_0=\frac{11}{20}<\frac{3}{5}$, then $W_k=F_k(D_{i_0})\sim B_{i_0}^{\Sigma_k}(p_k)$. For $\forall x\in D_{i_0}$, consider the oriented Gauss map $G_k(x)=e^{-2u_k}\frac{\partial F_k}{\partial x_1}(x)\wedge \frac{\partial F_k}{\partial x_2}(x)$, and let $f_k(x)=|G_k(x)-V_k|$,  where $V_k=\frac{1}{\pi i_0^2}\int_{D_{i_0}}G_k(x)dx$.
 Then
  $$|\nabla^{\Sigma_k} f_k|_{g_k}(x)\le |\nabla^{\Sigma_k} G_k|_{g_k}(x)=|A_k|_{g_k}(x).$$
 So, for $\forall \eta_k >0, \exists t_k\in (\eta_k/2,\eta_k)$ such that $t_k$ is regular value of $f_k$(i.e., $\Gamma_k=f_k^{-1}(t_k)$ consists of some circles and arcs in $D_{i_0}$), and
 \begin{align*}
 \mathcal{H}^1(\Gamma_k)&\le\frac{2}{\eta_k-\frac{\eta_k}{2}}\int_{\frac{\eta_k}{2}}^{\eta_k}\mathcal{H}^1\{f_k=t\}dt\\
              &\le\frac{4}{\eta_k}\int_{\frac{\eta_k}{2}}^{\eta_k}\frac{|A_k|_{g_k}}{|\nabla^{\Sigma_k} f_k|_{g_k}}(x)\mathcal{H}^1\{f_k=t\}dt\\
              &=\frac{4}{\eta_k}\int_{\frac{\eta_k}{2}\le f_k \le \eta_k}|A_k|_{g_k}(x)e^{2u_k}dx\\
              &\le\frac{4}{\eta_k}(\int_{D_{i_0}}|A_k|_{g_k}^2e^{2u_k})^{\frac{1}{2}}(\int_{D_{i_0}}1^2 e^{2u_k}dx)^{\frac{1}{2}}\\
              &\le \frac{8\sqrt{\pi}}{\eta_k}i_0 \varepsilon_k.
 \end{align*}
 Let $S_k=\{x\in D_{i_0} | f_k(x)>t_k\}$. Then by the Poincar\'{e} inequality \cite[section 7.8, page164]{GT}, we get
 \begin{align*}
 (\int_{D_{i_0}}|f_k-\frac{1}{|S_k|}\int_{S_k}f_k|^2)^{\frac{1}{2}}\le \frac{4\sqrt{2\pi} i_0^2}{|S_k|^{\frac{1}{2}}}(\int_{D_{i_0}}|\nabla f_k|^2)^{\frac{1}{2}}\le \frac{4\sqrt{2\pi} i_0^2}{|S_k|^{\frac{1}{2}}} \varepsilon_k,
 \end{align*}
 where we use the conformal invariance of the Dirichlet integral in the last inequality. But we also have,
 \begin{align*}
 (\int_{S_k}|f_k-\frac{1}{|S_k|}\int_{S_k}f_k|^2)^{\frac{1}{2}}\ge \|\frac{1}{|S_k|}\int_{S_k}f_k\|_{L^2(S_k)}-\|f_k\|_{L^2}\ge t_k|S_k|^{\frac{1}{2}}-\|f_k\|_{L^2},
 \end{align*}
 and
 \begin{align*}
 \|f_k\|_{L^2(S_k)}&\le (\int_{D_{i_0}}|G_k(x)-\frac{1}{\pi i_0^2}\int_{D_{i_0}}G_k|^2)^{\frac{1}{2}}\\
                   &\le 4\sqrt{n(n-1)}i_0(\int_{D_{i_0}}|\nabla G_k(x)|^2)^{\frac{1}{2}}\le 4\sqrt{n(n-1)}i_0 \varepsilon_k,
 \end{align*}
 where we use the conformal invariance of the Dirichlet integral again. Thus we have
$$t_k|S_k|^{\frac{1}{2}}\le \frac{4\sqrt{2\pi}i_0^2}{|S_k|^{\frac{1}{2}}} \varepsilon_k+\frac{4\sqrt{n(n-1)}i_0}{\sqrt{\pi}i_0} \varepsilon_k,$$
thus if we take $\eta_k=\sqrt{\varepsilon_k}$, then $t_k\in(\frac{\sqrt{\varepsilon_k}}{2}, \sqrt{\varepsilon_k})$ and
$$|S_k|\le (64+8\sqrt{n(n-1)\pi})\varepsilon_k^{\frac{1}{2}}i_0^2.$$
So, when taking $\eta_k=\varepsilon_k^{\frac{1}{2}}$, we have  the length estimate and the area estimate:
\begin{align*}
\mathcal{H}^1(\Gamma_k)\le 8\sqrt{\pi}\varepsilon_k^{\frac{1}{2}}i_0\ \ \  , \ \ \ |S_k|\le (64+8\sqrt{n(n-1)\pi})\varepsilon_k^{\frac{1}{2}}i_0^2
\end{align*}

  Furthermore, since $|G_k(x)-V_k|=f_k(x)\le t_k\le\varepsilon_k^{\frac{1}{2}}\ll \sqrt{2}$ for $\forall x\in D_{i_0}\backslash S_k$, $W_k\backslash F_k(S_k)$ could be written as a multi-graph with gradient small. More precisely, if we define $L_k=[V_k]$ to be the linear space corresponding to $V_k\in\Lambda^2(\mathbb{R}^n)$ and $\pi_k: \mathbb{R}^n\to L_k$ the orthogonal projection, $\Omega^{\prime}_k=\pi_k(W_k\backslash F_k(S_k))$, then $\pi_k$ is local diffeomorphism when restricted on $W_k\backslash F_k(S_k)$:

  If not, there must exists $x\in W_k\backslash F_k(S_k)$ and $e_1(x)\in [G_k(x)]\cap ker d\pi=[G_k(x)] \cap L_k^{\bot}$, that is, for any orthogonal basis $e_1^0, e_2^0$ of $L_k$, $\langle e_1,e_1^0\rangle=\langle e_1, e_2^0\rangle=0$. Choose any $e_2\in [G_k(x)]$ such that $[G_k(x)]=[e_1\wedge e_2]$, then $\langle e_1\wedge e_2, e_1^0\wedge e_2^0\rangle=0$, and $|G(x)-V_k|=\sqrt{|e_1\wedge e_2|^2+|e_1^0\wedge e_2^0|^2}=\sqrt{2}$.

   Hence there is a multi-function $v_k^{\prime}:\Omega_k^{\prime}\to L_k^{\bot}$ with gradient $|\nabla v_k^{\prime}|\le\Psi(t_k)=\Psi(\varepsilon_k)$ such that $W_k=$ graph$v_k^{\prime}$. But when noticing that
   $$V_k=\frac{1}{\pi i_0^2}\int_{D_{i_0}}G_k\to\frac{1}{\pi i_0^2}\int_{D_{i_0}}G_{\infty}=\frac{\partial}{\partial x_1}\wedge \frac{\partial}{\partial x_2}=V_{\infty}$$
   (by the control convergence theorem), we know that Graph$v_k^{\prime}$ could be written as a Graph over $L_{\infty}=[V_{\infty}]$ with gradient $\le |\nabla v_k^{\prime}|+|V_k-V_{\infty}|=\Psi(\varepsilon_k)$. That is:
 \begin{align*}
 W_k\backslash F_k(S_k)=\mathrm{Graph} \ \mathrm{of}\   v_k: \Omega^2_k\subset V_{\infty}\to V_{\infty}^{\perp},\ \ |\nabla v_k|\le\Psi(\varepsilon_k),
 \end{align*}
 where $\Omega^2_k$ is a bounded planner domain contained in $\Omega^1_k=\pi(W_k)\sim D_{i_0}$

 The domain $W_k\backslash F_k(S_k)$ may not be connected since there are some arcs in $\Gamma_k$ with ends on $\partial W_k$. But when noticing $\mathcal{H}^1(\Gamma_k)\le 8\sqrt{\pi}\varepsilon_k^{\frac{1}{2}}i_0$, $|S_k|\le (64+8\sqrt{n(n-1)\pi})\varepsilon_k^{\frac{1}{2}}i_0^2$ and $F_k\to F_{\infty}$,  we know $W_k\backslash F_k(S_k)$ contains a unique large connected  component $C_k^{\prime} \sim W_k\backslash F_k(S_k)$ such that
 \begin{align*}
 (1-\Psi(\varepsilon_k))\pi i_0^2\le \mathcal{H}^2(C_k^{\prime})\le(1+\Psi(\varepsilon_k))\pi i_0^2.
 \end{align*}
 W.L.O.G., we can assume $\Gamma_k$ does not contain any arc and $C_k^{\prime}=W_k\backslash F_k(S_k)$.

Step 3. The multi-graph is in fact single valued.

Consider $\Omega^2_k:=\pi(C_k^{\prime})$, where the orthogonal projection $\pi:\mathbb{R}^n\to L_{\infty}$ is local diffeomorphism when restricted on $C_k^{\prime}$. Since $F_k\to F_{\infty}$ in $C^{\alpha}(D_{i_0};\mathbb{\mathrm{R}}^n)$, we know $\pi\circ F_k\to Id|_{D_{i_0}}$ in $C^{\alpha}(D_{i_0};\mathbb{\mathrm{R}}^2)$ and hence
  $$\Omega_k^2=\pi(C_k^{\prime})=\pi\circ F_k(D_{i_0}\backslash S_k)\supset D_{(1-\Psi(\varepsilon_k))i_0}\backslash\pi\circ F_k(S_k)=:\Omega_k^3.$$
Since $\pi$ is local diffeomorphism when restricted to $C_k^{\prime}$, we know $\partial \pi\circ F_k(S_k)\subset \pi(\partial F_k(S_k))=\pi(\Gamma_k)$ and
  $$\mathcal{H}^1(\partial \pi\circ F_k(S_k))\le \mathcal{H}^1(\pi(\Gamma_k))\le\mathcal{H}^1(\Gamma_k)\le8\sqrt{\pi}\varepsilon_k^{\frac{1}{2}}i_0,$$
 thus we may assume
  $$\Omega_k^3\supset D_{(1-\Psi(\varepsilon_k))i_0}\backslash \cup_{i\in I_k}d_i=:\Omega_k,$$
where $\{d_i\}_{i\in I_k}$ is a finite set of disjoint closed topological disks in $D_{(1-\Psi(\varepsilon_k))i_0}$ such that $\Sigma_{i\in I_k} \mathcal{H}^1(\partial d_i)\le 8\sqrt{\pi}\varepsilon_k^{\frac{1}{2}}i_0$ and so $\Sigma_{i\in I_k}|d_i|\le 16 \varepsilon_k i_0^2 $ by the isoperimetric inequality.

 Now, let $C_k=\pi^{-1}(\Omega_k)\cap C_k^{\prime}$. Then also because $\pi$ is local diffeomorphism on $C_k^{\prime}$, there are finite many disjoint closed topological disks $\{D_i\}_{i\in J_k}\subset W_k$  with $\Sigma_{i\in J_i}\mathcal{H}^1(\partial D_i)\le (1+\Psi(\varepsilon_k))(8\sqrt{\pi}\varepsilon_k^{\frac{1}{2}}i_0)$ such that $$C_k=C_k^{\prime}\backslash \cup_{i\in J_k} D_i=W_k\backslash \cup_{i\in J_k} D_i$$
 and
 $$\pi(\partial C_k)=\pi(\cup_{i\in J_k} \partial D_i)=\partial \Omega_k,$$
  that is, $\pi: C_k\to \Omega_k$ is a proper local diffeomorphism and hence a covering map.
But on the other hand, $C_k\subset C_k^{\prime}$ implies
  $$\mathcal{H}^2(C_k)\le (1+\Psi(\varepsilon_k))\pi i_0^2$$
 but
 $$|\Omega_k|=|D_{(1-\Psi(\varepsilon_k))i_0}\backslash \cup_{i\in I_k}d_i|\ge (1-\Psi(\varepsilon_k))\pi i_0^2-16 \varepsilon_k i_0^2=(1-\Psi(\varepsilon_k))\pi i_0^2.$$
  Thus $\pi: C_k\to \Omega_k$ must be a single cover, i.e., $v_k$ is single valued when restricted on $\Omega_k$.  Moreover, the uniform convergence of the metrics induced by  $F_k:D_{i_0}\to W_k$ again implies $\Sigma_{i\in J_k}\mathcal{H}^2(D_i)\le (1+\Psi(\varepsilon_k))^3 8\sqrt{\pi}\varepsilon_k^{\frac{1}{2}}i_0^2$.

Letting $U_0(p_k)=W_k$, $u_k=v_k|_{\Omega_k}$ we get the conclusion!
\end{proof}

From the Decomposition theorem, we can get the following ``area drop" property.
\begin{cor}[$\mathbf{Density\  Drop}$]
For fixed any $V>0$, there exists an $\varepsilon_2=\varepsilon_2(n,V)>0$ such that for $\forall \varepsilon\in (0,\varepsilon_2]$ the following holds:

  If  $F:(\Sigma,g,p)\to (\mathbb{R}^n,0)$ is a Riemannian immersion proper in $B_R(0)$ with
  $$\mathcal{H}^2(\Sigma_k\cap B_R(0))\le VR^2\ \ and \ \  \int_{\Sigma\cap B_R(0)}{|A|}^2d\mu_g\le \varepsilon^2,$$
 then
  $$\frac{\mathcal{H}^2(\Sigma^{\frac{R}{2}}(0))}{\pi (\frac{R}{2})^2}\le 1+\Psi(\varepsilon).$$
\end{cor}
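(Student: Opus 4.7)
The plan is to read off the corollary directly from the Decomposition Theorem~\ref{decomposition}, with the area of the small topological disks handling the error. First, I would invoke Theorem~\ref{decomposition} (shrinking $\varepsilon_2(n,V)$ if necessary) to produce a topological disk $U_0(p)\sim B_{R/2}^{\Sigma}(p)$ with $\Sigma^{R/2}(0)\subset U_0(p)$, finitely many disjoint topological disks $\{D_i\}\subset U_0(p)$ satisfying $\sum_i\mathcal{H}^2(D_i)\le \Psi(\varepsilon)R^2$, and a function $u:\Omega\to L^\perp$ with $|\nabla u|\le \Psi(\varepsilon)$ such that $U_0(p)\setminus \cup_iD_i=\mathrm{graph}\,u$, where $L$ is an affine $2$-plane in $\mathbb{R}^n$ and $\Omega\subset L$ is a bounded planar domain.

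Next I would write the basic decomposition
\begin{equation*}
\mathcal{H}^2(\Sigma^{R/2}(0))\le \mathcal{H}^2\bigl(\Sigma^{R/2}(0)\cap \mathrm{graph}\,u\bigr)+\sum_i\mathcal{H}^2(D_i),
\end{equation*}
where the second summand is already $\le \Psi(\varepsilon)R^2$. For the first summand, let $\pi_L:\mathbb{R}^n\to L$ be the orthogonal projection and set $\Omega'=\pi_L\bigl(F(\Sigma^{R/2}(0)\cap \mathrm{graph}\,u)\bigr)\subset \Omega$. By the area formula together with $|\nabla u|\le \Psi(\varepsilon)$,
\begin{equation*}
\mathcal{H}^2\bigl(\Sigma^{R/2}(0)\cap \mathrm{graph}\,u\bigr)=\int_{\Omega'}\sqrt{1+|\nabla u|^2}\,dA\le (1+\Psi(\varepsilon))|\Omega'|.
\end{equation*}
It then suffices to check $|\Omega'|\le (1+\Psi(\varepsilon))\pi(R/2)^2$: since $F(\Sigma^{R/2}(0))\subset B_{R/2}(0)$ and every $y$ in the graph satisfies $|y|^2=|\pi_L y|^2+|y-\pi_L y|^2$ with $|y-\pi_L y|\le \mathrm{dist}(0,L)+\sup|u|$, the projection $\Omega'$ sits inside a disk in $L$ of radius at most $(1+\Psi(\varepsilon))R/2$, hence $|\Omega'|\le (1+\Psi(\varepsilon))\pi(R/2)^2$. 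Dividing through by $\pi(R/2)^2$ gives the conclusion.

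The only point deserving care is the placement of the affine plane $L$ relative to the origin. Since $F(p)=0\in U_0(p)$, either $0\in \mathrm{graph}\,u$ or $0$ lies in one of the $D_i$; in both cases one obtains $\mathrm{dist}(0,L)\le \Psi(\varepsilon)R$ from the small-gradient bound on $u$ on a domain of diameter $\sim R$ and from $\sum_i\mathcal{H}^2(D_i)\le \Psi(\varepsilon)R^2$ (which forces each $D_i$ to have small diameter). Likewise $\sup_{\Omega'}|u|\le \Psi(\varepsilon)R$, so the planar radius estimate above really is $(1+\Psi(\varepsilon))R/2$. I do not expect a genuine obstacle: the corollary is essentially a bookkeeping consequence of Theorem~\ref{decomposition}, and the only nontrivial input is the area formula for a near-flat Lipschitz graph.
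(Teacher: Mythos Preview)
Your proposal is correct and matches the paper's approach exactly --- the paper's own proof is the one-line remark that the gradient of the graph map and the measure outside the graph are both controlled by $\Psi(\varepsilon)$. Your detour through $\mathrm{dist}(0,L)$ is unnecessary: orthogonal projection onto an affine plane is $1$-Lipschitz, so $\Omega'\subset\pi_L(B_{R/2}(0))$ already sits in a disk of radius $R/2$ in $L$, giving $|\Omega'|\le\pi(R/2)^2$ directly (and note that in codimension $>1$ the area element is $\sqrt{\det(I+(\nabla u)^T\nabla u)}$ rather than $\sqrt{1+|\nabla u|^2}$, but this is still $\le 1+\Psi(\varepsilon)$).
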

\begin{proof}
Since the Gradient of the graph map and the measure outside the graph are both well estimated, we get the result.
\end{proof}
\begin{rmk}
With this dropped area density, by a similar argument as in the first step of the proof of the Allard Regularity theorem--the Lipschitz Approximation--and Reifenberg's Topological disk theorem, we know there exists some $\Psi_1(\varepsilon)=\frac{1}{2^{19}}\min\{\Psi(\varepsilon)^{\frac{1}{2}},\varepsilon\}$, such that for any $\xi\in \Sigma^{\Psi_1(\varepsilon)R}(p)$, $\Sigma^{\Psi_1(\varepsilon)R}(\xi)$ is a topological disk. This means, in the small scale of $\Psi_1(\varepsilon)R$, there is no holes caused by intersecting $\Sigma$ with an extrinsic ball $B_{\Psi_1(\varepsilon)R}(0)$.
\end{rmk}
\section{Application \Rmnum{2}-H\'{e}lein's convergence theorem}\label{section:non-collapsing Helein}
\subsection{}
H\'{e}lein's convergence theorem was first proved by H\'{e}lein \cite{H}. An optimal version of the theorem was stated in \cite{KL12}  as following:
\begin{thm}\label{Helein's ocnvergence theorem}
 Let $f_k\in W^{2,2}(D,\mathbb{R}^n)$ be a sequence of conformal immersions with induced metric $g_{k}=e^{2u_k}\delta_{ij}$ and satisfy
$$
\int_{D}|A_{f_k}|^2d\mu_{g_k}\le\gamma<\gamma_n=
\begin{cases}
8\pi \text{ for } n=3, \\
4\pi \text{ for } n\ge4.
\end{cases}
$$
Assume also that $\mu_{g_k}(D)\le C$ and $f_k(0)=0$. Then $f_k$ is bounded in $W^{2,2}(D_r,\mathbb{R}^n)$ for any $r\in (0,1)$, and there is a subsequence  such that one of the following  two alternatives holds:
\begin{enumerate}[$(a)$]
\item $u_k$ is bounded in $L^{\infty}(D_r)$ for any $r\in (0,1)$, and $f_k$ converges weakly in $W^{2,2}_{loc}(D,\mathbb{R}^n)$ to a conformal immersion $f\in W^{2,2}_{loc}(D,\mathbb{R}^n)$;
\item $u_k\to -\infty$ and $f_k\to 0$ locally uniformly on $D$.
\end{enumerate}
\end{thm}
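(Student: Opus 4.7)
The plan is to exploit the Gauss curvature equation $-\Delta u_k = K_k e^{2u_k}$ together with the Hardy-space estimate of M\"uller--\v{S}ver\'ak (Theorem~\ref{MSFS}/Lemma~\ref{extend}) to split $u_k = v_k + h_k$ with $v_k$ uniformly bounded in $L^\infty$ and $h_k$ harmonic, and then perform a dichotomy on the averages of $h_k$. First I would check that $\int_D |A_{f_k}|^2 d\mu_{g_k}\le \gamma < \gamma_n$ translates into the smallness $\int_D |D\varphi_k\wedge D\varphi_k|\le \pi\varepsilon$ with $\varepsilon<1$ required by Lemma~\ref{extend}: for $n\ge 4$ via the general bound $|D\varphi\wedge D\varphi|\le \tfrac14 |A|^2\, d\mu_g$, and for $n=3$ via the codimension-one identification $\varphi^*\omega = K\, d\mu_g$ together with $|K|\le \tfrac14|A|^2$ (which explains the jump $4\pi\to 8\pi$). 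Lemma~\ref{extend} then supplies $v_k\in C^0(\mathbb{C})\cap W^{2,1}(\mathbb{C})$ with $-\Delta v_k=w_k$ on $\mathbb{C}$ and $\|v_k\|_{L^\infty(\mathbb{C})}+\|Dv_k\|_{L^2(\mathbb{C})}\le C(n,\gamma)$, and $h_k:=u_k-v_k$ is harmonic on $D$.

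Next, I would control the averages. Denoting $\bar u_k = \tfrac{1}{|D_{1/2}|}\int_{D_{1/2}} u_k$, Jensen's inequality together with $\int_D e^{2u_k}\,dx\le C$ gives an \emph{upper} bound on $\bar u_k$, and hence $\bar h_k = \bar u_k - \bar v_k \le \bar u_k + \|v_k\|_\infty$ is bounded above. Passing to a subsequence, either $\bar h_k$ stays bounded, or $\bar h_k \to -\infty$; these two scenarios correspond to alternatives $(a)$ and $(b)$.

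If $\bar h_k$ stays bounded, I would apply Harnack's inequality to the non-negative harmonic functions $\sup_{D_r} h_k - h_k$ (the sup being controlled by the mean-value property and the upper bound on $\bar h_k$) to obtain $\|h_k\|_{L^\infty(D_r)}\le C(r)$ for every $r<1$, hence $\|u_k\|_{L^\infty(D_r)}\le C(r,\gamma)$. The mean-curvature equation $\Delta f_k = e^{2u_k}\vec H_k$ then yields
\[
\int_{D_r}|\Delta f_k|^2\,dx \le \|e^{2u_k}\|_{L^\infty(D_r)}\int_{f_k(D_r)}|\vec H_k|^2\, d\mu_{g_k}\le C(r,\gamma),
\]
while $|\nabla f_k|=\sqrt{2}\,e^{u_k}$ is uniformly bounded in $L^\infty(D_r)$; combined with $f_k(0)=0$, this gives a uniform $W^{2,2}(D_r)$ bound. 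Extracting subsequences, $f_k\rightharpoonup f$ in $W^{2,2}_{\mathrm{loc}}(D)$ and strongly in $W^{1,p}_{\mathrm{loc}}$. To identify $f$ as a conformal immersion, I would use that $v_k$ is pre-compact in $C^0_{\mathrm{loc}}(\mathbb{C})$ (via its uniform $W^{2,1}$ estimate) and $h_k$ is pre-compact in $C^\infty_{\mathrm{loc}}$ (harmonic with uniform $L^\infty$ bound), so $u_k\to u$ locally uniformly; together with the $L^p_{\mathrm{loc}}$ convergence $f_k^*g_{\mathbb{R}^n}\to f^*g_{\mathbb{R}^n}$ this forces $f^*g_{\mathbb{R}^n}=e^{2u}\delta_{ij}$, and $u\in L^\infty_{\mathrm{loc}}$ keeps $|\nabla f|$ bounded away from zero.

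If instead $\bar h_k\to -\infty$, Harnack applied to $\sup_{D_r} h_k - h_k\ge 0$ forces $h_k\to -\infty$ uniformly on compact subsets, whence $u_k\to -\infty$ locally uniformly (since $\|v_k\|_\infty\le C$), and therefore $|\nabla f_k|=\sqrt{2}\,e^{u_k}\to 0$ uniformly on compact subsets. With $f_k(0)=0$ this gives $f_k\to 0$ locally uniformly, which is alternative $(b)$. The main obstacle is the sharp $L^\infty$ control of $v_k$ below the threshold $\gamma_n$, which is precisely what the Hardy-space machinery of Theorem~\ref{MSFS} provides; the reflection step in Lemma~\ref{extend} is what upgrades this to a control on all of $\mathbb{C}$ from the local information on $\varphi_k$, and a secondary subtlety is ensuring a.e.\ convergence of $u_k$ in Case~$(a)$ so that the limit metric is genuinely conformal, which is handled by splitting $u_k=v_k+h_k$ and invoking compactness on each piece separately.
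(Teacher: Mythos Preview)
The paper does not supply its own proof of this theorem: it is quoted as a known result of H\'elein \cite{H}, with the optimal constants $\gamma_n$ due to Kuwert--Li \cite{KL12}, and is used as a black box in the proof of Corollary~\ref{non-collapsing}. There is therefore nothing in the paper to compare your argument against.

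That said, your proposal is the standard proof, and it is essentially the same mechanism the paper deploys in Lemma~\ref{lemma3} for the closely related $L^\infty$ estimate: split $u_k=v_k+h_k$ via Lemma~\ref{extend}, bound $\bar u_k$ from above by Jensen and the area bound, and run a Harnack dichotomy on the harmonic part $h_k$. The identification of the limit as a conformal immersion via separate compactness of $v_k$ and $h_k$ is also how the paper argues in Step~3 of the proof of Theorem~\ref{intrinsic convergence}. One small inaccuracy: your justification of the threshold $\gamma_3=8\pi$ is not quite right. The inequality $|K|\le\tfrac14|A|^2$ should read $|K|\le\tfrac12|A|^2$, and feeding either bound directly into Lemma~\ref{extend} does not yield $8\pi$; the doubling of the constant in codimension one is a separate refinement in \cite{KL12} (it comes from the fact that for $n=3$ the target of the Gauss map is $S^2=\mathbb{CP}^1$ itself and no Hopf lift is needed, which sharpens the constant in Theorem~\ref{MSFS}). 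This does not affect the structure of your argument, only the bookkeeping of constants.
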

\begin{cor}\label{non-collapsing}
Let $f_k\in W^{2,2}(S^2,\mathbb{R}^n)$ be a sequence of immersions with $g_k=df_k\otimes df_k$ and satisfy
$Will(f_k)\le C$. If $f_k(x_0)=0$ for some $x_0\in S^2$  and $$vol_{g_k}(S^2)\equiv V.$$ Then there exist a subsequence$($still denoted as $f_k)$ and a sequence of M\"{o}bius transformations $\phi_k\in \mathcal{M}(S^2)$ such that $\tilde{f}_k=f_k\circ \phi_k$ converges weakly to some immersion $f\in W^{2,2}_{loc}(S^2\backslash S,\mathbb{R}^n)$ in $W^{2,2}_{loc}(S^2\backslash S, \mathbb{R}^n)$, where
$$S=\{x\in S^2|\lim_{r\to 0}\liminf_{k\to \infty}\int_{B_r^{g_0}(x)}|A_{\tilde{f}_k}|^2d\mu_{\tilde{g}_k}\ge \gamma_n\}$$
is the finite set of singular points in $S^2$.
\end{cor}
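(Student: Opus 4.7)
The Gauss--Ricci identity $|A|^2 = 4|H|^2 - 2K$ and Gauss--Bonnet on $S^2$ give
$$\int_{S^2} |A_{f_k}|^2\,d\mu_{g_k} = 4\int_{S^2}|H_{f_k}|^2\,d\mu_{g_k} - 16\pi \le C',$$
uniformly in $k$. Together with $\mu_{g_k}(S^2) = V$ and the monotonicity formula (Theorem \ref{monotonicity formulae}), this bounds the diameter of $f_k(S^2)$, so we may fix $R_0$ with $f_k(S^2) \subset B_{R_0}(0)$ for every $k$. The plan is then to apply Theorem \ref{convergence} in $\mathbb R^n$ away from the extrinsic concentration points of $|A|^2$ to extract an abstract intrinsic limit $\Sigma_\infty$; to conformally compactify $\Sigma_\infty$ to $S^2$ using Proposition \ref{isothermal radius} and Theorem \ref{MSFS}; and to obtain the M\"obius maps $\phi_k$ as the biholomorphisms identifying the compactified limit with a fixed copy of $S^2$.

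The pushforward Radon measures $\nu_k := (f_k)_\sharp(|A_{f_k}|^2\,d\mu_{g_k})$ on $\overline{B_{R_0}(0)}$ have uniformly bounded mass, so along a subsequence $\nu_k \rightharpoonup \nu$ weakly-$*$. Fix $\bar\varepsilon$ with $4\pi\bar\varepsilon < \min\{\varepsilon_0(2V), \gamma_n\}$, where $\varepsilon_0$ comes from Proposition \ref{isothermal radius}; then $\Sigma_{\text{ext}} := \{y:\nu(\{y\}) \ge 4\pi\bar\varepsilon\}$ is finite. For every $y \in \overline{B_{R_0}(0)} \setminus \Sigma_{\text{ext}}$ one finds $r_y > 0$ with $\nu_k(B_{r_y}(y)) < 4\pi\bar\varepsilon$ and (by monotonicity) $\mu_{g_k}(f_k^{-1}(B_{r_y}(y))) \le 2V r_y^2$ for large $k$. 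Proposition \ref{isothermal radius} then yields a uniform positive lower bound on the isothermal radius at every preimage of $B_{r_y/2}(y)$, so Theorem \ref{convergence} applies on each such ball. A diagonal exhaustion of $\overline{B_{R_0}(0)}\setminus \Sigma_{\text{ext}}$ produces, after a further subsequence, an abstract Riemann surface $(\Sigma_\infty, g, \mathcal{O})$ with continuous metric, a proper isometric immersion $F:\Sigma_\infty \to \mathbb{R}^n \setminus \Sigma_{\text{ext}}$, and smooth embeddings $\Phi_k:\Sigma_\infty \hookrightarrow S^2$ such that $\Phi_k^*g_k \to g$ in $L^p_{\text{loc}}$, $\Phi_k^*\mathcal{O}_{g_0}\to \mathcal{O}$, and $f_k \circ \Phi_k \to F$ weakly in $W^{2,2}_{\text{loc}}$.

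To build the $\phi_k$, I compactify $\Sigma_\infty$. Each end accumulates at some $y \in \Sigma_{\text{ext}}$; on a small neighborhood of the end, $\int |A|^2$ is arbitrarily small and the area is bounded by monotonicity. The Hardy-space $L^\infty$-estimate of Theorem \ref{MSFS} then controls the conformal factor in an isothermal chart, showing that each end is conformally a punctured disk. Hence $\Sigma_\infty$ compactifies to a closed Riemann surface $\overline{\Sigma}_\infty$; since each $\Phi_k$ embeds $\Sigma_\infty$ topologically as an open subset of $S^2$, $\overline{\Sigma}_\infty$ has genus zero, so it is biholomorphic to $S^2$. Fix such a biholomorphism $\psi:S^2 \to \overline{\Sigma}_\infty$ and set $S' := \psi^{-1}(\overline{\Sigma}_\infty \setminus \Sigma_\infty)$, a finite set. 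Then $\phi_k := \Phi_k \circ \psi : S^2\setminus S' \to S^2$ is conformal and extends across $S'$ by holomorphic removable singularity to a M\"obius transformation of $S^2$. By construction $\tilde f_k = f_k \circ \phi_k \to F \circ \psi =: f$ weakly in $W^{2,2}_{\text{loc}}(S^2\setminus S', \mathbb{R}^n)$; since the intrinsic concentration set $S$ defined in the corollary statement is contained in $S'$, the required convergence in $W^{2,2}_{\text{loc}}(S^2\setminus S, \mathbb{R}^n)$ follows a fortiori.

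The main obstacle is the compactification step: Theorem \ref{convergence} only yields a non-compact intrinsic limit whose ends could a priori be wild, and one must exhibit conformal removability at each end to land in $S^2$. What makes this work is precisely the chain ``volume bound $\Rightarrow$ isothermal radius bound (Proposition \ref{isothermal radius}) $\Rightarrow$ conformal factor bound (Theorem \ref{MSFS})'' near every puncture; the normalization $vol_{g_k}(S^2)\equiv V$ is what supplies the initial area bound. This is exactly how the volume constraint rules out the intrinsic collapse that would otherwise defeat H\'elein's compactness theorem.
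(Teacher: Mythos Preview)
Your overall strategy is quite different from the paper's, and it contains a genuine gap at the step where you produce the M\"obius transformations.

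The maps $\Phi_k:\Sigma_\infty\to S^2$ furnished by Theorem \ref{convergence} are \emph{not} conformal. They are built in Lemma \ref{gluing lemma} by gluing the local charts $\phi_{ks}=f_{ks}\circ f_s^{-1}$ with a partition of unity; while each $\phi_{ks}$ is individually conformal, a convex combination of conformal maps is not conformal. Item 2) of Theorem \ref{convergence} (``the complex structures converge'') only asserts that the transition functions $f_{ks}^{-1}\circ f_{kt}$ converge to $f_s^{-1}\circ f_t$; it does not say that $\Phi_k$ is holomorphic. Consequently $\phi_k:=\Phi_k\circ\psi$ is merely a smooth embedding of a compact subset of $S^2\setminus S'$ into $S^2$, and the holomorphic removable singularity theorem cannot be invoked to extend it to an element of $\mathcal{M}(S^2)$. (There is a secondary issue: $\Phi_k$ is only defined on compact exhaustions of $\Sigma_\infty$, not on all of $\Sigma_\infty$, so even conformality would not immediately give a global map.) To repair this you would have to replace each $\Phi_k$ by a genuinely conformal map from $(\Sigma_\infty,\mathcal{O})$ into $(S^2,\mathcal{O}_{g_0})$ and control it; this is essentially a uniformization/moduli argument that is not supplied.

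The paper sidesteps this entirely. It does not build an abstract intrinsic limit at all. Instead, by a pigeonhole argument on the limit measure $\nu$ of $|A_k|^2\,d\mu_{g_k}$, it locates a single extrinsic ball $B_r(p)$ where the total curvature is below $\varepsilon_0(\theta)/2$; Proposition \ref{isothermal radius} then gives a uniform isothermal radius $r_1>0$ on $f_k^{-1}(B_{r/4}(p_k))$. Inside this good region one picks three points $x_k^0,x_k^1,x_k^2$ whose images are $r_1/2$ apart, and takes $\phi_k\in\mathcal{M}(S^2)$ to be the \emph{unique} M\"obius transformation sending three fixed reference points $x_0^0,x_0^1,x_0^2$ to $x_k^0,x_k^1,x_k^2$. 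H\'elein's theorem (Theorem \ref{Helein's ocnvergence theorem}) is then applied directly to $\tilde f_k=f_k\circ\phi_k$, and the three-point normalization rules out alternative (b) because $|\tilde f_k(x_0^0)-\tilde f_k(x_0^i)|=r_1/2>0$. This is considerably shorter and avoids any compactification of an abstract limit.
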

\begin{proof}
Let $\mu_k=\mathcal{H}^2\llcorner f_k(S^2)$ and $\nu_k=\mu_k\llcorner |A_k|^2$. Then $\nu_k\ll\mu_k$, $\mu_k(\mathbb{R}^n)=V$ and $\nu_k(\mathbb{R}^n)\le 4\pi+C$ by the Gauss-Bonnet formulae. So after passing to subsequences, $\mu_k\rightharpoonup\mu$ and $\nu_k\rightharpoonup\nu$ as Radon measures $(\nu\ll\mu)$.  Let $d_k=diam(f_k(S^2))=\sup\{|p-q||p,q\in f_k(S^2)\}$. We claim:
\begin{enumerate}[(1)]
\item
$d_k\ge d:=\min\{1,\big(2(1+\frac{\pi C}{V})\big)^{-\frac{1}{2}}\}$
\item $\text{ spt} \mu$ contains at least infinite points,
\item for $\forall p_k\in f_k(S^2)$ and $r<\frac{d}{2}$,
$$\frac{\mu_k(B_r(p_k))}{\pi r^2}\le \theta:=\frac{6V}{\pi d^2}+2C.$$
\item $\exists p\in spt \mu$ and $r\in (0,\frac{d}{2})$, s.t. $\nu(B_r(p))\le \frac{1}{2}\varepsilon_0(\theta)$, where $\varepsilon_0(\theta)$ is the small constant defined in Proposition \ref{isothermal radius}.
\end{enumerate}
In fact, since $0\in f_k(S^2)=spt \mu_k$, if $d_k\le 1$, then by the monotonicity formulae, $\forall \delta>0$,
\begin{align*}
\frac{V}{\pi d_k^2}=\frac{\mathcal{H}^2(f_k(S^2))\cap B_{d_k}(0)}{\pi d_k^2}
&\le (1+\delta)\frac{\mathcal{H}^2(f_k(S^2))\cap B_1(0)}{\pi}+(1+\frac{1}{4\delta})Will(f_k)\\
&\le \frac{(1+\delta)V}{\pi}+(1+\frac{1}{4\delta})C.
\end{align*}
Taking $\delta=\frac{1}{2}$, we get $d_k\ge d(C,V)$.
Moreover, if $spt \mu=\{p_1,p_2,\ldots p_l\}$ is a finite set, then for each $p_i\in spt \mu$, by the monotonicity formulae again, we know for $r\le 1$,
\begin{align*}
V=\lim_{k\to \infty}\mu_k(f_k(S^2))&\le \lim_{k\to \infty} \mu_k(\mathbb{R}^n\backslash \cup_{i=1}^{l} B_{r}(p_i))+\lim_{k\to \infty}\Sigma_{i=1}^{l}\mu_k(B_r(p_i))\\
&= \mu(\mathbb{R}^n\backslash \cup_{i=1}^{l} B_{r}(p_i))+\lim_{k\to \infty}\Sigma_{i=1}^l\frac{\mu_k(B_r(p_i))}{\pi r^2}\pi r^2\\
&\le \pi r^2\lim_{k\to \infty}\Sigma_{i=1}^l\{(1+\delta)\frac{\mu_k(B_{1}(p_i))}{\pi}+(1+\frac{1}{4\delta})Will(f_k)\}\\
&\le\big((1+\delta)\frac{V}{\pi}+(1+\frac{1}{4\delta})C\big ) \pi l r^2,
\end{align*}
letting $r\to 0$ we get $V=0$. A contradiction! $(3)$ is also a simple corollary of the monotonicity formulae for $\delta=\frac{1}{2}$.

Now, let $N:=[\frac{4\pi+C}{\epsilon_0/2}]+1$ and take $\{p_i\}_{i=1}^N\subset spt\mu$. Then $\exists r>0$(we can assume $r<\frac{d}{2}$) s.t. $B_r(p_i)\cap B_r(p_j)=\emptyset$, $\forall 1\le i\neq j\le N$. Hence $\Sigma_{i=1}^N\nu(B_r(p_i))\le\nu(\mathbb{R}^n)\le 4\pi+C$ and there exists $1\le i_0\le N$, s.t. $\nu(B_r(p_{i_0}))\le \frac{4\pi+C}{N}\le\frac{\varepsilon_0}{2}$.\\
Moreover, since $p_{i_0}\in spt \mu$, there exists $p_k\in f_k(S^2)$ s.t. $|p_k-p_{i_0}|\to 0$, thus $\nu_k(B_{\frac{r}{2}}(p_k))\le \nu(B_r(p))\le \frac{\varepsilon_0}{2}$ for $k$ large enough. Similar to $(1)$, if we define $d_k(p_k)=\sup\{|p-p_k||p\in f_k(S^2)\}$, then $d_k(p_k)\ge d$ hence $f_k:S^2\to B_{\frac{r}{2}}(p)\subset B_{\frac{d}{2}}(p)$ properly for $k$ large enough. So, by Proposition \ref{isothermal radius}, for $\forall x\in f_k^{-1}(B_{\frac{r}{4}}(p_k))$, the isothermal radius
 $$i_{g_k}(x)\ge \alpha_0(\theta)\frac{r}{2}=:r_1>0.$$
Especially, we could choose $\{x_k^i\}_{i=0}^{2}\subset f_k^{-1}(B_{\frac{r}{4}}(p_k))$ such that $f_k(x^0_k)=p_k$ and $|f_k(x^i_k)-f_k(x^0_k)|=\frac{r_1}{2}$ for $i=1,2$.

Fix three different points $\{x_0^i\}_{i=0}^2\subset S^2$, there exists a unique M\"obius transformation $\phi_k\in \mathcal{M}(S^2)$ s.t. $\phi_k(x^i_0)=x_k^i, i=0,1,2.$ Consider $\tilde{f}_k=f_k\circ \phi_k:S^2\to \mathbb{R}^n$ and $\tilde{g}_k=\phi_k^*g_k$. Let
$$S=\{x\in S^2|\lim_{r\to 0}\liminf_{k\to \infty}\int_{B_r^{g_0}(x)}|A_{\tilde{f}_k}|^2dvol_{\tilde{g}_k}\ge \gamma_n\}.$$
Then $S$ is a finite set and by Theorem \ref{Helein's ocnvergence theorem}, $\tilde{f}_k$ converges to some $f$ weakly in $W^{2,2}_{loc}(S^2\backslash S, \mathbb{R}^n)$ such that either $f$ is an immersion or $f(S^2\backslash S)\equiv p$. But by the choice of $x_0^i$, we know $x_0^i\notin S, i=0,1,2$ and $|\tilde{f}_k(x_0^0)-\tilde{f}_k(x_0^i)|=\frac{r_1}{2}>0, i=1,2$, thus $f(S^2\backslash S)\neq p$ and $f:S^2\backslash S \to \mathbb{R}^n$ is an immersion.
\end{proof}

\end{document}